\newenvironment{proof}{{\noindent \bf Proof.}}{\hfill$\Box$\medskip}
\newtheorem{theorem}{Theorem}[section]
\newtheorem{corollary}[theorem]{Corollary}
\newtheorem{lemma}[theorem]{Lemma}
\newtheorem{proposition}[theorem]{Proposition}
\newtheorem{definition}{Definition}[section]
\newtheorem{remark}{Remark}
\def \b{\beta}
\def \G{\Gamma}
\def \d{\delta}
\def \R{\mathbb{R}}
\def \N{\mathbb{N}}
\def \A{\mathcal{A}}
\def \M{\mathcal{M}}
\begin{document}

\title{Diophantine approximation and run-length function on $\beta$-expansions
\footnotetext {2010 AMS Subject Classifications: 11K55, 28A80}}
\author{  Lixuan Zheng\\
\small \it Department of Mathematics, South China University of Technology, Guangzhou 510640, P.R. China\\
\& \it  LAMA, Universit\'{e} Paris-Est Cr\'{e}teil, 61 av G\'{e}n\'{e}ral de Gaulle, 94010, Cr\'{e}teil, France}
\date{11th July,2018}
\date{}
\maketitle
\begin{center}
\begin{minipage}{120mm}{\small {\bf Abstract.} For any $\beta > 1$, denoted by $r_n(x,\beta)$ the maximal length of consecutive zeros amongst the first $n$ digits of the $\beta$-expansion of $x\in[0,1]$. The limit superior (respectively limit inferior) of $\frac{r_n(x,\beta)}{n}$ is linked to the classical Diophantine approximation (respectively uniform Diophantine approximation). We obtain the Hausdorff dimension of the level set $$E_{a,b}=\left\{x \in [0,1]: \liminf_{n\rightarrow \infty}\frac{r_n(x,\beta)}{n}=a,\ \limsup_{n\rightarrow \infty}\frac{r_n(x,\beta)}{n}=b\right\}\ (0\leq a\leq b\leq1).$$ Furthermore, we show that the extremely divergent set $E_{0,1}$ which is of zero Hausdorff dimension is, however, residual.  The same problems in the parameter space are also examined.}
\end{minipage}
\end{center}

\vskip0.5cm {\small{\bf Key words and phrases} beta-expansion; Diophantine approximation; run-length function; Hausdorff dimension; residual}\vskip0.5cm

\section{Introduction}
Let $\beta>1$ be a real number. The \emph{$\beta$-transformation} on $[0,1]$ is  defined by  $$T_{\beta}(x) = \beta x-\lfloor\beta x\rfloor,$$where $\lfloor \xi\rfloor$ means the integer part of $\xi$. It is well-known (see \cite{R}) that, every real number $x \in [0, 1]$ can be uniquely expanded as a series
\begin{equation}\label{1.1}
x=\frac{\varepsilon_1(x,\beta)}{\b}+\cdots+\frac{\varepsilon_n(x,\beta)}{\beta^n}+\cdots,
\end{equation}
where $\varepsilon_n(x,\beta)=\lfloor \beta T_{\beta}^{n-1}(x)\rfloor$ for all $n\geq 1$. We call $\varepsilon_n(x,\beta)$ \emph{the $n$-th digit of $x$} and $\varepsilon(x,\beta):=(\varepsilon_1(x,\beta), \ldots,\varepsilon_n(x,\beta),\ldots)$ the \emph{$\beta$-expansion of $x$}.

For each $x\in [0,1]$ and $n\geq 1$, the {\it run-length function} $r_n(x,\beta)$ is defined to be the maximal length of consecutive zeros amongst the prefix $(\varepsilon_1(x,\beta),\ldots,\varepsilon_n(x,\beta))$, i.e.,\ $$r_n(x,\beta)=\max\{1\leq j\leq n: \varepsilon_{i+1}(x,\beta)=\cdots=\varepsilon_{i+j}(x,\beta)=0 \ {\rm for\ some}\ 0\leq i \leq n-j\}.$$ If such $j$ does not exist, we set $r_n(x,\beta)=0$. In 1970, Erd\"{o}s and R\'{e}nyi \cite{ER} showed that for Lebesgue almost all $x\in[0,1]$, we have
\begin{equation}\label{rn}
\lim_{n\rightarrow \infty}\frac{r_n(x,2)}{\log_2n}=1.
\end{equation}
The result of Erd\"{o}s and R\'{e}nyi \cite{ER} has been extended to the general case $\beta>1$ by Tong, Yu and Zhao \cite{TYZ}. Ma, Wen and Wen \cite{MW} showed that the exceptional set of points violating (\ref{rn}) is of full Hausdorff dimension. Let $\mathcal{E}$ denote the set of increasing functions $\varphi:\N\rightarrow (0,+\infty)$ satisfying $\lim\limits_{n\rightarrow \infty }\varphi(n)=+\infty$ and $\limsup\limits_{n\rightarrow \infty}\frac{\varphi(n)}{n}=0$.  For every $0\leq a\leq b\leq \infty$ and any function $\varphi\in \mathcal{E}$, define
$$E_{a,b}^{\varphi}:=E_{a,b}^{\varphi}(\beta)=\left\{x \in [0,1]:\liminf\limits_{n\rightarrow \infty}\frac{r_n(x,\beta)}{\varphi(n)}=a,\ \limsup\limits_{n\rightarrow \infty}\frac{r_n(x,\beta)}{\varphi(n)}=b\right\}.$$
The set $E_{a,b}^{\varphi}$ has been proved to have full Hausdorff dimension by Li and Wu (see \cite{LW1,LW2}) for the case $\beta=2$ and by Zheng, Wu and Li \cite{ZWL} for the general case $\beta>1$.

Remark that the above $\mathcal{E}$ does not contain the function $\varphi(n)=n$. In fact, the asymptotic behavior of $\frac{r_n(x,\beta)}{n}$ is directly related to the Diophantine approximation of $\beta$-expansions. For all $x\in[0,1]$, Amou and Bugeaud \cite{B} defined the exponent $v_{\beta}(x)$ to be the supremum of the real numbers $v$ for which the equation $$T_\beta^n x\leq \beta^{-nv}$$ has infinitely many positive integer $n$.  Bugeaud and Liao \cite{AB} defined the exponent $\hat{v}_{\beta}(x)$ to be the supremum of the real numbers $\hat{v}$ for which, for all $N\gg 1$, there is a solution with $1\leq n\leq N$, such that $$T_\beta^n x\leq \beta^{-Nv}.$$ We will see (Lemmas \ref{ship} and \ref{ship1}) that for all $0<a<1,\ 0<b<1$, $$\liminf\limits_{n\rightarrow \infty}\frac{r_n(x,\beta)}{n}=a\ \Leftrightarrow\ \hat{v}_{\beta}(x)=\frac{a}{1-a}\quad {\rm and} \quad \limsup\limits_{n\rightarrow \infty}\frac{r_n(x,\beta)}{n}=b\ \Leftrightarrow\ v_{\beta}(x)=\frac{b}{1-b}.$$

For all $0\leq a\leq b\leq1$, let
\begin{equation}\label{ab1}
E_{a,b}:=E_{a,b}(\beta)=\left\{x \in [0,1]:\liminf_{n\rightarrow \infty}\frac{r_n(x,\beta)}{n}=a,\ \limsup_{n\rightarrow \infty}\frac{r_n(x,\beta)}{n}=b\right\}.
\end{equation}
Denote the Hausdorff dimension by $\dim_{\rm H}$. For more information about the Hausdorff dimension, we refer to \cite{FE}. We establish the following theorem.
\begin{theorem}\label{ab}
The set $E_{0,0}$ has full Lebesgue measure. If $\frac{b}{1+b}< a\leq 1,\ 0< b \leq 1$, then $E_{a,b}=\emptyset$. Otherwise, we have  $$\dim_{\rm{H}}E_{a,b}=1-\frac{b^2(1-a)}{b-a}.$$
\end{theorem}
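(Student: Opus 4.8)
The plan is to treat the three assertions separately, the common tool throughout being the two elementary monotonicity properties of the run-length function: $r_n(x,\beta)$ is non-decreasing in $n$ and $r_{n+1}(x,\beta)\le r_n(x,\beta)+1$, so that $n\mapsto r_n$ increases in unit steps and the ratio $r_n/n$ can only rise while a record-long run of zeros is being extended and must fall on every ``plateau'' where $r_n$ is constant. For the first assertion I would invoke the Erd\H{o}s--R\'enyi law in the form proved for general $\beta$ by Tong--Yu--Zhao: for Lebesgue-almost every $x$ one has $r_n(x,\beta)\sim\log_\beta n$, whence $r_n/n\to 0$ and both the upper and lower limits vanish; thus $E_{0,0}$ carries full Lebesgue measure.

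For the emptiness statement I would quantify the trade-off between the two limits. Suppose $\limsup r_n/n=b$. A peak of the ratio, of height $b$, is produced by a record run of some length $\ell$ completing at position $p=\ell/b$; on the ensuing plateau this run persists and $r_n/n=\ell/n$ decreases until a strictly longer run takes over. To bring the ratio back up to a new peak without letting the value at the intervening valley fall below $a$, the \emph{next} record run must begin before $\ell/n$ drops under $a$, i.e. no later than position $\ell/a$, so that it has length $\ell$ already at $n=\ell/a$ and hence begins no later than $\ell(1-a)/a$; and for the valley to sit at height exactly $a$ (rather than higher) this new run must be \emph{disjoint} from the old one, since overlapping zero-blocks would merge into a single longer run and keep the ratio strictly above $a$. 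Comparing the old run's right end $\ell/b$ with the latest admissible left end $\ell(1-a)/a$ of the new run shows these two requirements are compatible exactly when $\ell/b\le\ell(1-a)/a$, that is $a\le\frac{b}{1+b}$, the extremal refresh (the new run started immediately) pinning the highest attainable valley at $\frac{b}{1+b}$. Hence if $a>\frac{b}{1+b}$ one cannot have $\liminf r_n/n=a$ together with $\limsup r_n/n=b$, and $E_{a,b}=\emptyset$. Equivalently, via Lemmas \ref{ship} and \ref{ship1}, this is the Bugeaud--Liao constraint $\hat v_\beta\le\frac{v_\beta}{1+v_\beta}$.

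For the lower bound $\dim_{\rm H}E_{a,b}\ge 1-\frac{b^2(1-a)}{b-a}$ I would build a Cantor subset realizing the extremal refresh pattern. Fix a rapidly increasing sequence of peak scales and interleave, with geometric ratio $\rho=\frac{b(1-a)}{a(1-b)}$, a cascade of zero-runs: the $i$-th run has length $\ell_i=\ell_1\rho^{\,i-1}$ and completes at $p_i=\ell_i/b$ (producing a peak of height $b$), the next run begins at position $\ell_i(1-a)/a$ (producing a valley of height exactly $a$), and on the free gap $[\ell_i/b,\ \ell_i(1-a)/a]$ the digits are chosen arbitrarily among admissible continuations. A direct summation of the geometric series shows that the zero-runs occupy, at a peak scale $n$, a proportion $\frac{b^2(1-a)}{b-a}$ of the first $n$ digits, and that this proportion is maximal precisely there. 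Distributing the natural measure (uniform over the free choices) and using the standard cylinder-length estimates for $\beta$-expansions (admissibility of zero-blocks and existence of full cylinders, as in Bugeaud--Wang and Fan--Wang), one computes the local dimension as $\liminf_n\frac{\#\{\text{free digits in }[1,n]\}}{n}=1-\frac{b^2(1-a)}{b-a}$, and the mass distribution principle yields the lower bound.

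For the matching upper bound I would cover $E_{a,b}$ by cylinders taken at the limsup scales. For $x\in E_{a,b}$ and $\epsilon>0$ there are infinitely many $n$ with $r_n\ge(b-\epsilon)n$ while $r_m\ge(a-\epsilon)m$ for all large $m\le n$; the latter forces, below the length-$(b-\epsilon)n$ run, a geometric cascade of disjoint zero-runs exactly as above, so that every admissible prefix of such an $x$ contains at least $\big(\frac{b^2(1-a)}{b-a}-\epsilon'\big)n$ zeros arranged in these runs. The key counting step is to deduce that the number $C_n$ of admissible words of length $n$ compatible with this structure is at most $\beta^{\,(1-\frac{b^2(1-a)}{b-a}+o(1))n}$; covering $E_{a,b}$ by the corresponding cylinders (each of diameter $\le\beta^{-n}$) and estimating $\sum_n C_n\beta^{-ns}$ then gives $\dim_{\rm H}E_{a,b}\le 1-\frac{b^2(1-a)}{b-a}$. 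I expect this counting estimate to be the main obstacle: one must show that the liminf constraint genuinely forces the claimed minimal density of zeros (an extremal problem for the cascade, not merely the presence of one long run), and one must combine it with the non-Markov cylinder structure of the $\beta$-shift, where cylinder lengths are controlled only up to sub-exponential factors and one must pass through full cylinders to convert word counts into Hausdorff-measure bounds.
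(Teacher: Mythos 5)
Your treatment of three of the four components essentially reproduces the paper's. The full-measure claim for $E_{0,0}$ via Tong--Yu--Zhao, the emptiness criterion via the forced valley of depth $\frac{b}{1+b}$ after each record run, and the lower-bound Cantor set with geometric ratio $\rho=\frac{b(1-a)}{a(1-b)}$ (whose zero-density at a peak scale sums to $\frac{b^2(1-a)}{b-a}$) are exactly the arguments of Sections 3.1 and 3.2; the paper's scales $n_k',m_k'$ are powers of the same $\rho$, its local-dimension computation is your density count done via Stolz--Ces\`aro, and it likewise concludes with the (modified) mass distribution principle. One small point: at $a=0$ your ratio $\rho$ degenerates, and one must take super-geometric scales such as $n_k'=k^k$, as the paper does.

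The genuine gap is the upper bound, and it sits exactly where you flagged it. The paper does not prove the upper bound by a covering argument at all: having established in Lemmas \ref{ship} and \ref{ship1} the dictionary $\liminf_n r_n/n=a\Leftrightarrow\hat v_\beta(x)=\frac{a}{1-a}$ and $\limsup_n r_n/n=b\Leftrightarrow v_\beta(x)=\frac{b}{1-b}$, it identifies $E_{a,b}$ with $U_\beta\left(\frac{a}{1-a},\frac{b}{1-b}\right)$ and imports the dimension (hence the upper bound) from Bugeaud--Liao's Theorem \ref{bl} when $0<a\le\frac{b}{1+b}$, $0<b<1$, from Shen--Wang's Theorem \ref{sw} via $E_{0,b}\subseteq\{x:v_\beta(x)\ge\frac{b}{1-b}\}$ when $a=0$, and from $E_{a,1}\subseteq F_1$ with $\dim_{\rm H}F_1=0$ when $b=1$. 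Your proposed direct counting argument would have to reprove the hard half of Bugeaud--Liao: you must show that $r_m\ge(a-\epsilon)m$ for all large $m$, together with a run of length $(b-\epsilon)n$ ending near $n$, forces a cascade of pairwise disjoint runs of total length at least $\left(\frac{b^2(1-a)}{b-a}-\epsilon'\right)n$ for \emph{every} admissible configuration --- an extremal problem over all placements and lengths of the cascade, not merely the self-similar one --- and then control the positional entropy of the cascade (the geometric growth of run lengths, hence the $O(\log n)$ number of runs, is what keeps the number of placements subexponential) before converting word counts into a Hausdorff bound through the non-uniform cylinder lengths of the $\beta$-shift. None of this is carried out, so as written the deepest part of the statement is not established. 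The quickest repair is to use your own observation that the constraint is the Bugeaud--Liao one and simply quote Theorems \ref{bl} and \ref{sw}, as the paper does.
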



Let $0\leq a\leq 1$ and  $0\leq b\leq 1$. We can further study the level sets
$$E_a:=E_a(\beta)=\left\{x \in [0,1]:\liminf_{n\rightarrow \infty}\frac{r_n(x,\beta)}{n}=a\right\}$$and
\begin{equation}\label{bb}
F_b:=F_b(\beta)=\left\{x \in [0,1]:\limsup_{n\rightarrow \infty}\frac{r_n(x,\beta)}{n}=b\right\}.
\end{equation}
Using Theorem \ref{ab}, we obtain the following results of the Hausdorff dimensions of $E_a$ and $F_b$.
\begin{corollary}\label{a}
(1) When $0\leq a\leq \frac{1}{2}$, we have $$\dim_{\rm{H}}E_a={(1-2a)}^2.$$ Otherwise, $E_{a}=\emptyset$.

(2) For all $0\leq b\leq1$, we have $$\dim_{\rm{H}}F_b=1-b.$$
\end{corollary}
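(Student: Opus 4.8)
The plan is to read both dimensions off Theorem \ref{ab} by optimizing the formula $d(a,b):=1-\frac{b^2(1-a)}{b-a}$ over the free parameter, supplying matching lower and upper bounds. Since $E_a=\bigcup_{b}E_{a,b}$ and $F_b=\bigcup_{a}E_{a,b}$, the lower bounds are immediate from monotonicity of Hausdorff dimension under inclusion: selecting the optimal partner gives $E_{a,2a}\subseteq E_a$ and $E_{0,b}\subseteq F_b$, so that $\dim_{\rm H}E_a\ge d(a,2a)$ and $\dim_{\rm H}F_b\ge d(0,b)$, provided these pairs avoid the empty regime $\frac{b}{1+b}<a$ of Theorem \ref{ab} (one checks $E_{a,2a}$ is nonempty exactly when $a\le\frac12$, and $E_{0,b}$ always is for $b>0$). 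The degenerate corners are handled directly: $E_{0,0}\subseteq E_0\cap F_0$ has full Lebesgue measure, so both have dimension $1$.

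The optimization is a short calculus step. For $E_a$ I fix $a$ and vary $b$; nonemptiness of $E_{a,b}$ forces $b\ge\frac{a}{1-a}$, and on $[\frac{a}{1-a},1]$ the quantity $\frac{b^2}{b-a}$ has a unique interior minimum at $b=2a$ with value $4a$, whence $\max_b d(a,b)=1-4a(1-a)=(1-2a)^2$. The critical point $b=2a$ lies in the admissible range precisely when $a\le\frac12$; for $a>\frac12$ one has $\frac{a}{1-a}>1$, so every $E_{a,b}$ with $a\le b\le1$ is empty and hence $E_a=\emptyset$. For $F_b$ I fix $b$ and vary $a\in[0,\frac{b}{1+b}]$; since $\partial_a d=-b^2\frac{1-b}{(b-a)^2}\le0$, the map $a\mapsto d(a,b)$ is nonincreasing and attains its maximum $d(0,b)=1-b$ at $a=0$. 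Combined with the inclusions above, this already gives $\dim_{\rm H}E_a\ge(1-2a)^2$ and $\dim_{\rm H}F_b\ge 1-b$.

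The substance is the two upper bounds, where the difficulty is that $E_a$ and $F_b$ are unions of the $E_{a,b}$ over a continuum of parameters, so countable stability does not apply directly. For $F_b$ this is still easy: $\limsup_n\frac{r_n}{n}=b$ forces, for every $\e>0$, a zero-run of length at least $(b-\e)n$ among the first $n$ digits for infinitely many $n$, so $F_b\subseteq\{x:r_n\ge(b-\e)n\ \text{i.o.}\}$. Covering this limsup set at each scale $n$ by the admissible cylinders of length $n$ containing such a run---there are at most $\asymp n\,\beta^{(1-b+\e)n}$ of them, of diameter $\asymp\beta^{-n}$---bounds its dimension by $1-b+\e$, and $\e\to0$ yields $\dim_{\rm H}F_b\le1-b$. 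For $E_a$ I exploit that $\limsup_n\frac{r_n}{n}\in[a,1]$ is bounded: partitioning $[a,1]$ into finitely many intervals $I_1,\dots,I_M$ of mesh $\delta$ writes $E_a$ as a \emph{finite} union of band sets $\{x\in E_a:\limsup_n\frac{r_n}{n}\in I_j\}$, so by finite stability it suffices to bound each band. Each band is controlled by the covering already constructed for the upper bound in Theorem \ref{ab}, now run uniformly over $b\in I_j$ using only $\liminf_n\frac{r_n}{n}\ge a-\e$ and $\limsup_n\frac{r_n}{n}\le\sup I_j$; this gives band dimension at most $\sup_{b\in I_j}d(a,b)+\eta(\delta)$ with $\eta(\delta)\to0$. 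Maximizing over $j$ and letting $\delta\to0$ gives $\dim_{\rm H}E_a\le\sup_b d(a,b)=(1-2a)^2$.

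The main obstacle is exactly this band estimate for $E_a$. Unlike $F_b$, whose maximizing parameter sits at the boundary $a=0$ and is therefore captured by a single monotone limsup superset, the maximizer $b=2a$ for $E_a$ is \emph{interior}, so no one-sided superset of $E_a$ has the correct dimension and one genuinely needs the two-sided (uniform-approximation) structure carried by the liminf condition. By Lemmas \ref{ship} and \ref{ship1} this is the assertion $\dim_{\rm H}\{x:\hat v_\beta(x)\ge\frac{a}{1-a}\}=(1-2a)^2$, and the delicate point---already faced in the proof of Theorem \ref{ab}---is to manage the irregular lengths of $\beta$-cylinders while simultaneously forcing long zero-runs (to keep $\liminf\ge a-\e$) at many scales and forbidding longer ones (to keep $\limsup$ inside the band). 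Once this uniform covering is in place, the corollary follows by assembling the bounds above.
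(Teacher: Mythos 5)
Your lower bounds, the emptiness claim for $a>\frac12$, the optimization of $d(a,b)=1-\frac{b^2(1-a)}{b-a}$ (interior minimum of $\frac{b^2}{b-a}$ at $b=2a$, boundary maximum of $d(\cdot,b)$ at $a=0$), and the covering upper bound for $F_b$ are all correct. The genuine gap is the upper bound for $E_a$. Your band argument rests on ``the covering already constructed for the upper bound in Theorem \ref{ab}, now run uniformly over $b\in I_j$'' --- but no such covering exists in this paper: the upper bound in Theorem \ref{ab} for $0<a\le\frac{b}{1+b}$, $0<b<1$ is obtained by identifying $E_{a,b}$ with $U_{\beta}\bigl(\frac{a}{1-a},\frac{b}{1-b}\bigr)$ and citing Bugeaud--Liao (Theorem \ref{bl}), not by an explicit cover. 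So there is nothing to ``run uniformly,'' and making the band estimate rigorous would require you to construct, from scratch, a cover that is uniform in $b$ over each band with a quantified error $\eta(\delta)$; that is essentially a reproof of Bugeaud--Liao's theorem on the level sets of $\hat v_{\beta}$, which you correctly identify as the crux but do not carry out. As written, the proof of part (1) is incomplete at its only hard step.

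The route the paper takes avoids the continuum-union problem entirely: by Lemma \ref{ship}, $E_a$ is \emph{exactly} the level set $\{x\in[0,1]:\hat v_{\beta}(x)=\frac{a}{1-a}\}$ (no decomposition over $b$ is needed), and the final display of Theorem \ref{bl} gives its dimension directly as $\bigl(\frac{1-\hat v}{1+\hat v}\bigr)^2=(1-2a)^2$ for $0<a\le\frac12$, with $a=0$ handled by $E_{0,0}\subseteq E_0$ and $a>\frac12$ by inequality (\ref{le}). Likewise for part (2) you could have bypassed the hands-on cylinder count by noting $F_b=\{x:v_{\beta}(x)=\frac{b}{1-b}\}\subseteq\{x:v_{\beta}(x)\ge\frac{b}{1-b}\}$ (Lemma \ref{ship1}) and applying Theorem \ref{sw}, paired with your lower bound $E_{0,b}\subseteq F_b$; your explicit covering for $F_b$ is nonetheless a valid self-contained substitute for that citation.
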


We remak that the statement (2) of Theorem \ref{a} was also been obtained in \cite[Theorem 1.1]{LL} (see \cite{Z} for the case $\beta=2$).

A set $R$ is called \emph{residual} if its complement is meager (i.e.,\ of the first category). In a complete metric space, a set is residual if it contains a dense $G_\d$ set, i.e.,\ a countable intersection of open dense sets (see \cite{JC}).  Similar to the results of \cite{LW1,LW2,ZWL}, the set of extremely divergent points is residual, and thus is large in the sense of topology.
\begin{theorem}\label{res}
The set $E_{0,1}$ is residual in $[0,1]$.
\end{theorem}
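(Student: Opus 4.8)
The plan is to exhibit a dense $G_\delta$ subset of $E_{0,1}$ and then invoke the Baire category theorem, since $[0,1]$ is a complete metric space. Because $0\le r_n(x,\beta)\le n$ we always have $\liminf_n \frac{r_n(x,\beta)}{n}\ge 0$ and $\limsup_n \frac{r_n(x,\beta)}{n}\le 1$, so it suffices to force the two ``infinitely often'' conditions. For integers $k\ge 1$ and $N\ge 1$ set
\[
U_{k,N}=\mathrm{int}\Big(\bigcup_{n>N}\Big\{x\in[0,1]:\tfrac{r_n(x,\beta)}{n}>1-\tfrac1k\Big\}\Big),\qquad
V_{k,N}=\mathrm{int}\Big(\bigcup_{n>N}\Big\{x\in[0,1]:\tfrac{r_n(x,\beta)}{n}<\tfrac1k\Big\}\Big).
\]
Each set under the interior is a union of level-$n$ cylinders, because $r_n(\cdot,\beta)$ depends only on the first $n$ digits; hence $U_{k,N}$ and $V_{k,N}$ are open. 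If $x\in\bigcap_{k,N}U_{k,N}$ then $\limsup_n \frac{r_n(x,\beta)}{n}=1$, and if $x\in\bigcap_{k,N}V_{k,N}$ then $\liminf_n \frac{r_n(x,\beta)}{n}=0$; thus $G:=\bigcap_{k,N}(U_{k,N}\cap V_{k,N})$ is a $G_\delta$ set contained in $E_{0,1}$, and it remains only to prove that every $U_{k,N}$ and every $V_{k,N}$ is dense.

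Since the interiors of the cylinders $I_w$ form a base of the topology, density follows once we place a subcylinder of a given $I_w$ into the relevant set. For $U_{k,N}$ this is easy: appending $0$ to an admissible word always keeps it admissible (as $0$ is the smallest digit), so $w0^L$ is admissible for every $L$, and every $x\in I_{w0^L}$ satisfies $r_{m+L}(x,\beta)\ge L$, where $m=|w|$. Choosing $L>\max\{N,mk\}$ gives $\frac{r_{m+L}(x,\beta)}{m+L}\ge\frac{L}{m+L}>1-\frac1k$ with $m+L>N$, so $\mathrm{int}(I_{w0^L})\subseteq U_{k,N}$ meets $I_w$, proving $U_{k,N}$ is dense. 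The set $V_{k,N}$ is the delicate one, because a prefix $w$ may already contain a long run of zeros while every admissible continuation can only lengthen runs. The key input is the existence of arbitrarily long admissible words with short zero-runs. Fix once and for all an infinite admissible sequence $\mathbf{y}=(y_1,y_2,\dots)$ with $y_1\neq 0$ whose runs of zeros are all bounded by a constant $C=C(\beta)$; such a sequence exists for every $\beta>1$, since a run of $j$ zeros after position $i$ is equivalent to $T_\beta^i x<\beta^{-j}$, so a point whose forward $T_\beta$-orbit is bounded away from $0$ (e.g.\ a suitable periodic point) has bounded runs. Given $w$ of length $m$, first extend it to a full word $\tilde w$ of length $\tilde m$ by the standard combinatorial lemmas for $\beta$-expansions (every admissible word extends to a full word), after which any admissible sequence may be concatenated. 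Appending the length-$\ell$ prefix of $\mathbf{y}$ yields an admissible word $\tilde w\,(y_1\cdots y_\ell)$ whose maximal zero-run is at most $C':=\max\{C,\tilde m\}$, independent of $\ell$ (the junction creates no long run as $y_1\neq 0$). For $\ell$ large enough that $n:=\tilde m+\ell>N$ and $C'/n<1/k$, every $x\in I_{\tilde w\,(y_1\cdots y_\ell)}$ satisfies $\frac{r_n(x,\beta)}{n}\le\frac{C'}{n}<\frac1k$, so $\mathrm{int}(I_{\tilde w\,(y_1\cdots y_\ell)})\subseteq V_{k,N}$ meets $I_w$; hence $V_{k,N}$ is dense.

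By the Baire category theorem the countable intersection $G=\bigcap_{k,N}(U_{k,N}\cap V_{k,N})$ of dense open sets is a dense $G_\delta$ set, and since $G\subseteq E_{0,1}$ the set $E_{0,1}$ is residual. The main obstacle is precisely the density of $V_{k,N}$: it rests on producing long admissible continuations that do not create long zero-runs, which is the combinatorial input (full words together with an orbit bounded away from $0$) isolated above; the residual admissibility bookkeeping and the Baire argument are then routine.
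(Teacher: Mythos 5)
Your proof is correct, and it reaches the conclusion by a genuinely different decomposition than the paper's. Both arguments exhibit a dense $G_\delta$ subset of $E_{0,1}$ and invoke the Baire category theorem, but the paper builds a \emph{single} family of open dense sets, each a union of interiors of cylinders of one explicit shape $\left(\epsilon_1,\ldots,\epsilon_k,0^{n_k-k},(1,0^{m_k-n_k-1})^{t_k},0^{p_k}\right)$, with the sequences $n_k,m_k$ tuned in advance so that any point whose expansion matches this pattern infinitely often realizes $\liminf r_n/n=0$ (at times $n_{k+1}+m_k-n_k-1$) and $\limsup r_n/n=1$ (at times $m_k$) simultaneously; the combinatorial input is Proposition~\ref{re1} on full words. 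You instead decouple the two conditions into the two standard families $U_{k,N}$ and $V_{k,N}$ and prove each dense separately: appending $0^L$ handles $U_{k,N}$, while a full-word extension followed by a long admissible block with uniformly bounded zero-runs handles $V_{k,N}$. Your version is more modular and dispenses with the choice of the sequences $n_k,m_k$ altogether; what each approach "buys" is that the paper's single pattern gives an explicit subset on which both limits are attained along named subsequences, whereas yours isolates the only genuinely nontrivial point (density of $V_{k,N}$) cleanly. The one spot you should make fully explicit is the existence of the bounded-run admissible sequence $\mathbf{y}$: your dynamical sketch (an orbit bounded away from $0$) is only a gesture, but the claim follows immediately from the paper's own toolkit — with $M=\min\{i>1:\varepsilon^\ast_i(\beta)>0\}$ the word $(1,0^{M-1})$ is full, hence $(1,0^{M-1})^\infty$ is admissible, begins with a nonzero digit, and has all zero-runs of length exactly $M-1$. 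With that substitution your argument is complete.
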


It is worth noting that the set $E_{0,1}$ is negligible with respect to the Lebesgue measure and Hausdorff dimension. However, the sets considered in \cite{LW1,LW2,ZWL} have Hausdorff dimension $1$. Let $\varphi\in \mathcal{E}$. Since the intersection of two residual sets is still residual, by combining Theorems \ref{res} and \cite[Theorem 1.2] {ZWL}, we deduce that the smaller set $$E:=E(\varphi,\beta)=\left\{x \in [0,1]:\liminf_{n\rightarrow \infty}\frac{r_n(x,\beta)}{\varphi(n)}=0,\ \limsup_{n\rightarrow \infty}\frac{r_n(x,\beta)}{n}=1\right\}$$ is also residual in $[0,1]$.

\bigskip

The $\beta$-expansion of $1$ completely characterizes all of the admissible words in the $\beta$-dynamical system (see Theorem \ref{P} in Section 2 for more details). We also study the run-length function $r_n(\beta)$ of the $\beta$-expansion of $1$ as $\beta$ varies in the parameter space $\{\beta\in\R: \beta>1\}$, i.e.,
$$r_n(\beta) = \max\{1\leq j \leq n : \varepsilon_{i+1}(\beta) =\cdots= \varepsilon_{i+j}(\beta) = 0 {\rm\ for\ some\ } 0 \leq i\leq n-j\}.$$ There are some results on $r_n(\beta)$ which are similar to those of $r_n(x,\beta)$. In \cite{HTY}, Hu, Tong and Yu proved that for Lebesgue almost all $1<\beta<2$, we have
\begin{equation}\label{1}
\lim_{n\rightarrow \infty}\frac{r_n(\beta)}{\log_\beta n}=1.
\end{equation}
Cao and Chen \cite{CC} showed that for any $\varphi \in \mathcal{E}$ and for all $0\leq a \leq b\leq +\infty$, the set $$\left\{\beta \in (1,2):\liminf\limits_{n\rightarrow \infty}\frac{r_n(\beta)}{\varphi(n)}=a,\ \limsup\limits_{n\rightarrow \infty}\frac{r_n(\beta)}{\varphi(n)}=b\right\}$$ is of full Hausdorff dimension. Remark that the results of \cite{HTY} and \cite{CC} can be easily generalized to the whole parameter space $\{\beta\in\R: \beta>1\}$. For simplicity, in this paper, we will also consider the parameter space $(1,2)$.  For all $0\leq a\leq b\leq1$, let
\begin{equation}\label{ab2}
E_{a,b}^P=\left\{\beta\in(1,2):\liminf_{n\rightarrow \infty}\frac{r_n(\beta)}{n}=a,\ \limsup_{n\rightarrow \infty}\frac{r_n(\beta)}{n}=b\right\}.
\end{equation}We have the following theorem.
\begin{theorem}\label{ab'}
The set $E_{0,0}^P$ has full Lebesgue measure.  If $\frac{b}{1+b}< a\leq 1,\ 0< b \leq 1$, then $E_{a,b}^P=\emptyset$. Otherwise, we have  $$\dim_{\rm{H}}E_{a,b}^P=1-\frac{b^2(1-a)}{b-a}.$$
\end{theorem}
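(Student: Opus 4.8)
The plan is to transport the entire analysis of Theorem~\ref{ab} from the phase space $[0,1]$ to the parameter space $(1,2)$. The one ingredient that makes this transport possible is a comparison between parameter cylinders and phase cylinders: writing, for a word $w=(w_1,\dots,w_n)$ that occurs as a prefix of the $\beta$-expansion of $1$ for some $\beta$,
$$I_w=\bigl\{\beta\in(1,2):\varepsilon_1(\beta)=w_1,\dots,\varepsilon_n(\beta)=w_n\bigr\},$$
one checks that $I_w$ is an interval and that $|I_w|\asymp\beta^{-n}$, the same exponential order as a cylinder of generation $n$ in the phase space. Because the metric geometry of $(1,2)$ is thus governed by the same symbolic data (subject to the admissibility rule of Theorem~\ref{P}) as that of $[0,1]$, the dimension exponent $1-\frac{b^2(1-a)}{b-a}$ will come out identical.

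First I would settle the two peripheral assertions. That $E_{0,0}^P$ carries full Lebesgue measure is immediate from (\ref{1}): for almost every $\beta\in(1,2)$ one has $r_n(\beta)\sim\log_\beta n$, whence $r_n(\beta)/n\to 0$ and both the lower and upper limits vanish. The emptiness of $E_{a,b}^P$ when $a>\frac{b}{1+b}$ follows from the same combinatorial incompatibility between $\liminf$ and $\limsup$ of the run-length function that underlies Theorem~\ref{ab}: sustaining $\liminf r_n/n=a$ forces the successive record runs of zeros to have relative length at least $\frac{a}{1-a}$, while $\limsup r_n/n=b$ bounds these relative lengths by $b$, and the two are reconcilable only when $\frac{a}{1-a}\le b$. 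This reasoning refers to the digit sequence alone and so applies verbatim to the expansion of $1$.

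The substance of the theorem is the dimension formula, for which I would prove matching bounds. For the upper bound I would cover $E_{a,b}^P$ by the parameter intervals $I_w$ ranging over admissible words $w$ of each generation $n$ whose maximal zero-run is close to $bn$ while the $\liminf$-profile is $a$; evaluating the $s$-sum with $|I_w|\asymp\beta^{-n}$ reduces the estimate to counting such admissible words, which is the same count as in the phase space and yields the exponent $1-\frac{b^2(1-a)}{b-a}$. For the lower bound I would build a Cantor subset of $(1,2)$ by concatenating blocks---long runs of zeros whose relative lengths oscillate so as to realise $\limsup=b$ and $\liminf=a$, separated by short controlling blocks---then distribute a measure over the corresponding parameter intervals and apply the mass distribution principle together with the estimate $|I_w|\asymp\beta^{-n}$.

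The crux, and the only genuine departure from the proof of Theorem~\ref{ab}, is the admissibility constraint peculiar to the parameter space: by Theorem~\ref{P} the expansion of $1$ must be \emph{self-admissible}, i.e.\ every shift of the digit sequence must be lexicographically dominated by the sequence itself, so the words entering the construction are not free but must lie in the admissible subshift. Long runs of zeros are harmless for this condition, since padding with zeros can only lower a tail lexicographically; the delicate point is to choose the separating blocks---taking them to start with the digit $1$ and to be short compared with the adjacent zero-runs---so that self-admissibility is preserved globally while the prescribed run-length frequencies are untouched and the generation-$n$ word count is unaffected to leading order. Confirming that this restriction to the admissible subshift does not lower the Hausdorff dimension is exactly the technical hurdle, and it is where the argument genuinely differs from the phase-space case.
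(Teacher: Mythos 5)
There is a genuine gap, and it sits exactly at the point you call the ``one ingredient that makes this transport possible'': the claim that $|I_w|\asymp\beta^{-n}$ for parameter cylinders is false in general. Only the upper bound $|I^P_n(\omega)|\leq \overline{\beta}(\omega)^{-n+1}$ holds for every self-admissible word; the lower bound depends on the \emph{recurrence time} $\tau(\omega)$ of the word, and Lemma \ref{S}(2) shows that for recurrent words the cylinder length carries an extra factor $\frac{\omega_{t(\omega)+1}}{\overline{\beta}(\omega)}+\cdots+\frac{\omega_{\tau(\omega)}+1}{\overline{\beta}(\omega)^{\tau(\omega)-t(\omega)}}$, which can be exponentially small in $n$ (for instance when the stretch of the period past position $t(\omega)$ consists mostly of zeros --- precisely the kind of word a long-zero-run construction produces). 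So a parameter cylinder of generation $n$ can be far shorter than $\beta^{-n}$, and your mass-distribution argument for the lower bound collapses: the measure of a cylinder divided by a drastically underestimated length no longer gives the right local exponent. You correctly identify self-admissibility as a constraint, but you locate the difficulty in the \emph{counting} of admissible words, whereas the real quantitative obstacle is the possible degeneration of cylinder \emph{lengths}. The paper resolves this by forcing every word in the Cantor construction to be non-recurrent --- each block is padded with a terminal $0^N$, and Lemma \ref{i} then shows $|I_n^P(u_1,\ldots,u_n)|\geq C(\beta_1)\beta_2^{-(n+N)}$ for all prefixes of points in the Cantor set --- before the mass distribution principle can be applied. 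Without some such device your lower bound does not go through.

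Two smaller remarks. First, the paper does not redo the whole dimension computation in the parameter space: for $0<a\leq\frac{b}{1+b}$, $0<b<1$ it translates the problem via Lemmas \ref{ship} and \ref{ship1} into the exponents $\hat{v}_{\beta}(1)$, $v_{\beta}(1)$ and quotes the Bugeaud--Liao parameter-space result (Theorem \ref{bl1}); the explicit Cantor construction is only needed for the boundary case $a=0$, and the cases $b=1$ and the upper bound for $a=0$ come from Persson--Schmeling (Theorem \ref{ps}). Your plan to prove both bounds from scratch by covering and counting is a legitimately different route (and for the upper bound only the always-valid inequality $|I^P_n(\omega)|\leq\overline{\beta}(\omega)^{-n+1}$ is needed, so that half is salvageable), but the lower bound cannot be completed as written. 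Second, your heuristic for the emptiness when $a>\frac{b}{1+b}$ gives the correct threshold $\frac{a}{1-a}\leq b$, but it should be turned into the explicit two-scale estimate the paper uses (evaluating $r_n$ at $n=n_k+\lfloor bn_k\rfloor$) rather than left at the level of ``record runs.''
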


Similarly, for every $0\leq a\leq 1$ and $0\leq b\leq 1$, we consider the set
$$E_a^P=\left\{\beta\in(1,2):\liminf_{n\rightarrow \infty}\frac{r_n(\beta)}{n}=a\right\},$$and
$$F_b^P=\left\{\beta\in(1,2):\limsup_{n\rightarrow \infty}\frac{r_n(\beta)}{n}=b\right\}.$$
\begin{corollary}\label{b}
(1) When $0\leq a\leq \frac{1}{2}$, we have $$\dim_{\rm{H}}E_a^P={(1-2a)}^2.$$ Otherwise, $E_{a}=\emptyset$.

(2) For every $0\leq b\leq1$, we have $$\dim_{\rm{H}}F_b^P=1-b.$$
\end{corollary}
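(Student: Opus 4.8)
The plan is to derive Corollary \ref{b} from Theorem \ref{ab'} by exactly the reduction that yields Corollary \ref{a} from Theorem \ref{ab}: write each one-sided level set as a union of the two-sided sets $E_{a,b}^P$ and establish matching lower and upper bounds. Writing $f(a,b)=1-\frac{b^2(1-a)}{b-a}$, the basic decompositions are
$$E_a^P=\bigcup_{a\le b\le 1}E_{a,b}^P,\qquad F_b^P=\bigcup_{0\le a\le b}E_{a,b}^P,$$
together with the emptiness criterion of Theorem \ref{ab'}: for $a,b>0$ the set $E_{a,b}^P$ is nonempty exactly when $a\le\frac{b}{1+b}$, i.e.\ $b\ge\frac{a}{1-a}$.

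For the lower bounds I would exploit inclusion and optimize the free parameter. Elementary calculus on $f$ shows that for fixed $a$ the map $b\mapsto f(a,b)$ is maximized at $b=2a$, with $f(a,2a)=(1-2a)^2$ (admissible precisely when $0\le a\le\frac12$), while for fixed $b$ the map $a\mapsto f(a,b)$ is decreasing, so it is maximized at $a=0$ with $f(0,b)=1-b$. Hence $E_{a,2a}^P\subseteq E_a^P$ gives $\dim_{\rm H}E_a^P\ge(1-2a)^2$, and $E_{0,b}^P\subseteq F_b^P$ gives $\dim_{\rm H}F_b^P\ge 1-b$; the value $a=0$ is covered by the full-measure statement $E_{0,0}^P\subseteq E_0^P$ of Theorem \ref{ab'}. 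When $a>\frac12$ the requirement $b\ge\frac{a}{1-a}>1$ forces every $E_{a,b}^P$ with $b\le1$ to be empty, so $E_a^P=\emptyset$.

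The upper bounds are where the real work lies, because these unions are uncountable and Hausdorff dimension is only countably stable. For $F_b^P$ this is mild: $F_b^P\subseteq\bigcap_{\epsilon>0}\{\beta: r_n(\beta)\ge(b-\epsilon)n \text{ infinitely often}\}$, and covering this $\limsup$ set at the depths $n$ carrying a zero-run of length $\ge(b-\epsilon)n$ (there are at most $\sim n\,\beta^{(1-b+\epsilon)n}$ admissible length-$n$ words of this type) yields $\dim_{\rm H}F_b^P\le 1-b+\epsilon$ for every $\epsilon$; this is essentially the covering already performed in \cite{LL}. For $E_a^P$ I would instead use $E_a^P\subseteq\{\beta:\liminf_n\frac{r_n(\beta)}{n}\ge a\}$ and decompose this set into finitely many thin bands according to the value of $\limsup_n\frac{r_n(\beta)}{n}\in[a,1]$, covering each band by the construction underlying the upper bound of Theorem \ref{ab'}; since $f(a,b)\le(1-2a)^2$ for every $b$, each band contributes dimension at most $(1-2a)^2$, and a finite union preserves the bound.

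The main obstacle is precisely this last covering. A naive count using the liminf condition alone — ``at each large scale $n$ there is a run of length $\ge an$, placed anywhere'' — only gives the bound $1-a$, which is strictly larger than $(1-2a)^2$. To reach the sharp exponent one must use the interaction of the constraints across a \emph{geometric} sequence of scales: a single zero-run of length $\approx bn$ completed by time $n$ keeps the ratio $r_m/m$ above the liminf level $a$ only until $m\approx \frac{b}{a}n$, after which a fresh run is required, so the forced runs occur at scales in ratio $\approx b/a$; balancing their density against the residual digit freedom is what produces the governing ratio $\frac{b^2(1-a)}{b-a}$ and, after optimization, the worst case $b=2a$. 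This is exactly the combinatorial estimate carried out in the proof of Theorem \ref{ab'} via the admissibility characterization of Theorem \ref{P}, so the cleanest write-up invokes that estimate as a lemma rather than reproving it; the passage from the point space to the parameter space $(1,2)$ introduces no new difficulty, the argument being formally identical to that of Corollary \ref{a}.
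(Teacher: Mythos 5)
Your lower bounds are correct and are the natural ones: $E_{a,2a}^P\subseteq E_a^P$ and $E_{0,b}^P\subseteq F_b^P$, with the optimization of $f(a,b)=1-\frac{b^2(1-a)}{b-a}$ at $b=2a$ (resp.\ $a=0$) giving $(1-2a)^2$ (resp.\ $1-b$); the emptiness for $a>\frac12$ and the case $a=0$ via $E_{0,0}^P$ are also handled correctly. The upper bound for $F_b^P$ by including it in the $\limsup$ set and covering at the scales carrying a zero-run of length $\ge (b-\epsilon)n$ is sound as well; it amounts to Theorem \ref{ps} after translating through Lemma \ref{ship1} applied to the orbit of $1$.

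The gap is in the upper bound for $E_a^P$, and you have correctly identified it as the place where the real work lies, but you have not closed it. Splitting $\{\beta:\liminf_n r_n(\beta)/n\ge a\}$ into finitely many bands according to the value of the $\limsup$ does not resolve the uncountable-union problem: each band $\{\beta:\liminf_n r_n(\beta)/n\ge a,\ \limsup_n r_n(\beta)/n\in[b_i,b_{i+1}]\}$ is still an uncountable union of level sets $E_{a',b'}^P$, so its dimension is not bounded by $\sup f(a',b')$ over the band merely by quoting Theorem \ref{ab'}; one needs a direct covering estimate for the band itself. You propose to invoke the combinatorial estimate underlying the upper bound of Theorem \ref{ab'} as a lemma, but no such estimate is carried out in this paper: the upper bound in Theorem \ref{ab'} is imported wholesale from Bugeaud and Liao's Theorem \ref{bl1}, which concerns exact level sets of the exponents $\hat v_\beta(1)$ and $v_\beta(1)$, not bands. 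So the decisive step of your argument remains unproved. The paper's own route sidesteps all of this: by Lemma \ref{ship} applied to the orbit of $1$, $E_a^P=\left\{\beta\in(1,2):\hat v_\beta(1)=\frac{a}{1-a}\right\}$, and the final assertion of Theorem \ref{bl1} already states $\dim_{\rm H}\{\beta\in(1,2):\hat v_\beta(1)=\hat v\}=\left(\frac{1-\hat v}{1+\hat v}\right)^2=(1-2a)^2$, so the uncountable-union difficulty is absorbed into the cited theorem; likewise $F_b^P\subseteq\{\beta\in(1,2):v_\beta(1)\ge\frac{b}{1-b}\}$ and Theorem \ref{ps} give the upper bound $1-b$ at once. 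Either switch to that reduction, or actually carry out the two-scale covering argument you sketch for bands; as written, the proposal does not supply it.
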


In addition, similar to Theorem {\ref{res}}, we have the following theorem.
\begin{theorem}\label{res'}
The set $E_{0,1}^P$ is residual in $[1,2]$.
\end{theorem}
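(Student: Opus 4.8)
The plan is to realize $E_{0,1}^P$ as a dense $G_\delta$ subset of $[1,2]$ and invoke the Baire category theorem, mirroring the symbolic argument behind Theorem \ref{res} but carrying out every construction inside the parameter space. Since $0\le r_n(\beta)/n\le 1$ for all $n$, one has $\limsup_n r_n(\beta)/n=1$ exactly when for every $m\ge 1$ and $N\ge 1$ there is some $n\ge N$ with $r_n(\beta)/n>1-1/m$, and $\liminf_n r_n(\beta)/n=0$ exactly when for every $m,N$ there is some $n\ge N$ with $r_n(\beta)/n<1/m$. Writing $U_{m,N}=\{\beta\in(1,2):\exists\,n\ge N,\ r_n(\beta)/n>1-1/m\}$ and $V_{m,N}=\{\beta\in(1,2):\exists\,n\ge N,\ r_n(\beta)/n<1/m\}$, we get $E_{0,1}^P=\bigcap_{m,N}\bigl(U_{m,N}\cap V_{m,N}\bigr)$. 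As $[1,2]$ is a complete metric space, it suffices to show each $U_{m,N}$ and each $V_{m,N}$ contains an open dense set; their countable intersection is then a dense $G_\delta$ contained in $E_{0,1}^P$.

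For the openness, I would use the parameter cylinders. For an admissible word $w=(w_1,\dots,w_n)$, that is, a prefix of the $\beta$-expansion $\varepsilon(\beta)=(\varepsilon_1(\beta),\varepsilon_2(\beta),\dots)$ of $1$ for some $\beta$, set $J(w)=\{\beta\in(1,2):\varepsilon_i(\beta)=w_i,\ 1\le i\le n\}$. By the admissibility characterization (Theorem \ref{P}) together with the monotonicity of $\beta\mapsto\varepsilon(\beta)$, each $J(w)$ is an interval on which the first $n$ digits, hence the value of $r_n$, are constant. Thus $U_{m,N}$ and $V_{m,N}$ are unions of such intervals; replacing each $J(w)$ by its interior produces genuinely open sets $U'_{m,N}\subseteq U_{m,N}$ and $V'_{m,N}\subseteq V_{m,N}$. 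As the discarded endpoints form a countable set, this does not affect density, and it remains to prove $U'_{m,N}$ and $V'_{m,N}$ dense.

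For density, fix $\beta_0\in(1,2)$ and $\varepsilon>0$, and choose a prefix $v$ of $\varepsilon(\beta_0)$ long enough that $J(v)\subseteq(\beta_0-\varepsilon,\beta_0+\varepsilon)$, possible since the cylinders containing $\beta_0$ shrink to $\{\beta_0\}$. For $U'_{m,N}$ I append a long run of zeros: since prolonging an admissible word by zeros preserves admissibility, $w=v\,0^k$ is admissible and $J(w)\subseteq J(v)$ is nondegenerate; taking $k$ so large that $n:=|v|+k\ge N$ and $k>(1-1/m)n$ forces $r_n(\beta)>(1-1/m)n$ for every $\beta\in\mathrm{int}\,J(w)$, so $U'_{m,N}$ meets the interval. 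For $V'_{m,N}$ I instead append to $v$ a long \emph{recovery block}: an admissible continuation of length $M$ whose longest run of zeros is bounded by a constant $C=C(v)$ independent of $M$, for instance the initial segment of the (eventually periodic) maximal admissible tail of $v$, whose zero-runs are uniformly bounded. Then the resulting word $w$ of length $n=|v|+M$ has longest zero-run at most $\max\{|v|,C\}$, so choosing $M$ large enough that $n\ge N$ and $\max\{|v|,C\}/n<1/m$ places every $\beta\in\mathrm{int}\,J(w)$ in $V'_{m,N}$. Baire's theorem then concludes, the endpoints $\beta=1,2$ being negligible for residuality in $[1,2]$.

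The main obstacle, absent in the symbolic setting of Theorem \ref{res}, is that in the parameter space the admissible words obey the self-admissibility (Parry) condition $\sigma^k\varepsilon(\beta)\preceq\varepsilon(\beta)$, so one cannot freely splice arbitrary blocks into $\varepsilon(\beta)$. Appending zeros is harmless because $0$ is the smallest digit, but producing the recovery blocks for $V'_{m,N}$ — long admissible continuations with uniformly bounded zero-runs that keep the associated cylinder nondegenerate — requires the lexicographic bookkeeping developed for Theorem \ref{ab'} (cf. \cite{CC}). Verifying that these insertions remain self-admissible and that the nested cylinders retain nonempty interior is where the real work lies; once this is in place, the Baire-category argument is routine.
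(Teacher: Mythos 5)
Your overall architecture is sound and is essentially the paper's: realize $E_{0,1}^P$ as a dense $G_\delta$ built from interiors of parameter cylinders and invoke Baire. The decomposition is genuinely different, though. You split the condition into two independent countable families $U_{m,N}$ (for the limsup) and $V_{m,N}$ (for the liminf) and prove each dense separately, whereas the paper builds a single family of dense open sets $U_n$ out of words $(\epsilon_1,\ldots,\epsilon_k,0^{n_k-k},(1,0^{m_k-n_k})^{t_k},0^{p_k})$ that realize \emph{both} events at prescribed positions ($r_n/n\approx 1$ at $n=m_k$, $r_n/n\approx 0$ at $n\approx n_{k+1}$), with the sequences $n_k,m_k$ fixed once and for all. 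Your version buys a cleaner logical structure and a trivial density proof for the limsup half (appending $0^k$ preserves self-admissibility because $0$ is the minimal digit and $\varepsilon_1(\beta)\geq 1$); the paper's version concentrates all the combinatorics in one word but avoids having to produce two different kinds of continuations.

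The gap is exactly where you flagged it: the density of $V'_{m,N}$. Your proposed recovery block --- ``the initial segment of the (eventually periodic) maximal admissible tail of $v$, whose zero-runs are uniformly bounded'' --- is not justified and is false as stated: the $\beta$-expansion of $1$ associated with an endpoint of a parameter cylinder need not be eventually periodic, and even when it is well defined its zero-runs need not be bounded (indeed $t_n(\beta)$ can be unbounded, which is why the paper introduces $\Gamma_n$). The correct device is the one used in Section 7 via Proposition \ref{se}: after padding $v$ (of length $L$) with a block of zeros to a word $\omega=(v,0^{j})$, append $(1,0^{c})^{t}$ where $c>j$ is chosen large enough that $(1,0^{c})<_{\rm lex}(v,0^{j})$, so that self-admissibility is preserved for every $t$; the resulting word of length $n=L+j+(c+1)t$ has maximal zero-run at most $L+j+c$, a quantity independent of $t$, so $r_n/n<1/m$ once $t$ is large. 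Note that one cannot take $c$ uniformly small (for $v=(1,0^{L-1})$ no block with a shorter zero-run than $L-1$ can follow), so the constant must indeed depend on $v$, but that is all your argument needs. With this substitution, and with Lemma \ref{S}(2) guaranteeing that every parameter cylinder has nonempty interior, your proof closes; as written, the key combinatorial step rests on an unproved and generally false claim.
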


We end this introduction by depicting the organization of our paper. In Section 2, we review some standard facts on the $\beta$-expansions without proofs. Theorem \ref{ab} and corollary \ref{a} are proved in Section 3. We give the proof of Theorem \ref{res} in Section 4. Section 5 contains a summary of some classical results of $\beta$-expansion in the parameter space. The proofs of Theorems \ref{ab'} and \ref{res'} are given in Sections 6 and 7 respectively.

\section{Fundamental results of $\beta$-expansion}
Throughout this section, we set up some notations and terminologies on $\beta$-expansions. Meanwhile, we give some basic results on $\beta$-expansion directly. For more properties on $\beta$-expansions, we refer the readers to \cite{BW,AB,P,R}.

Let $\A=\{0,1,\cdots,\lceil\beta\rceil\}$ where $\lceil\xi\rceil$ stands for the smallest integer larger than $\xi$. The definition of $\beta$-expansion gives the fact that every digit $\varepsilon_n(x,\beta)$ lies in the set $\A$. A word $(\varepsilon_1,\ldots,\varepsilon_n)\in \A^n$ is called \emph{admissible} with respect to $\beta$ if there exists an $x \in [0, 1)$ such that the $\beta$-expansion of $x$ begins with $(\varepsilon_1,\ldots,\varepsilon_n)$. Similarly, an infinite sequence $(\varepsilon_1,\ldots,\varepsilon_n,\ldots)$ is called \emph{admissible} with respect to $\beta$ if there exists an $x \in [0, 1)$ whose $\beta$-expansion is $(\varepsilon_1,\ldots,\varepsilon_n,\ldots)$. Denote by $\Sigma_\b^n$ the set of all $\beta$-admissible words of length $n$, i.e.,\ $$\Sigma_\b^n=\{(\varepsilon_1,\ldots,\varepsilon_n)\in \A^n: \exists\ x \in [0,1),\ {\rm such\ that\ }\varepsilon_j(x,\b)=\varepsilon_j, {\rm\  for \ all }\ 1\leq j \leq n\}.$$ Denote by $\Sigma_\b^\ast$ the set of all $\beta$-admissible words of finite length, i.e.,\ $\Sigma_\b^\ast=\bigcup\limits_{n=0}^\infty \Sigma_\b^n$. The set of $\beta$-admissible sequences is denoted by $\Sigma_\b$ ,  i.e.,\ $$\Sigma_\b=\{(\varepsilon_1,\varepsilon_2,\ldots)\in \A^\N: \exists\ x \in [0,1),\ {\rm s.t.}\ \varepsilon(x,\beta)=(\varepsilon_1,\varepsilon_2,\ldots)\}.$$

The $\beta$-expansion of the unit $1$ plays an important role in the research of admissible words and admissible sequences. We call $\beta$ a \emph{simple Parry number} if the $\beta$ expansion of $1$ is finite. That is, there exists an integer $m\geq 1$ such that $\varepsilon_m\neq 0$ and $\varepsilon_k(\beta)= 0$ for every $k> m$. In this case, we let $$\varepsilon^\ast(\beta):=(\varepsilon_1^\ast(\beta),\varepsilon_2^\ast(\beta),\ldots)=(\varepsilon_1(\beta),\varepsilon_2(\beta),\ldots, \varepsilon_m(\beta)-1)^\infty,$$ where $\omega^\infty$ is the infinite periodic sequence $(\omega,\omega,\ldots)$. If the $\beta$-expansion of $1$ is not finite, let $\varepsilon^\ast(\beta)=\varepsilon(1,\beta)$. In both cases, we can check that $$1=\frac{\varepsilon_1^\ast(\beta)}{\beta}+\cdots+\frac{\varepsilon_n^\ast(\beta)}{\beta^n}+\cdots.$$  The sequence $\varepsilon^\ast(\beta)$ is therefore called \emph{the infinite $\beta$-expansion of $1$}.

We endow the space $\A^\N$ with the \emph{lexicographical order $<_{\rm{lex}}$}: $$(\omega_1,\omega_2,\ldots)<_{\rm{lex}}(\omega'_1,\omega'_2,\ldots)$$if $\omega_1<\omega'_1$ or there exists an integer $j > 1$, such that, for all $1 \leq k<j$, $\omega_k=\omega'_k$  but $\omega_j<\omega'_j$. The symbol $\leq_{\rm{lex}}$ means $=$ or $<_{\rm{lex}}$. Moreover, for all $n,m \geq 1$, $(\omega_1,\ldots,\omega_n) <_{\rm{lex}}(\omega'_1,\ldots,\omega'_m)$ stands for $(\omega_1,\ldots,\omega_n,0^\infty) <_{\rm{lex}}(\omega'_1,\ldots,\omega'_m,0^\infty)$.

The following theorem due to Parry \cite{P} yields that the $\beta$-dynamical system is totally determined by the infinite $\beta$-expansion of $1$. Let $\omega=(\omega_1,\ldots,\omega_n)\in \A^n$ for all $n\geq 1$. Let $\sigma$ be the shift transformation such that $\sigma\omega=(\omega_2,\ldots,\omega_n).$

\begin{theorem}[Parry \cite{P}]\label{P} Let $\beta >1$.

(1) For every $n\geq 1$, $\omega=(\omega_1,\ldots,\omega_n)\in \Sigma_\beta^n$ if and only if $\sigma ^j\omega \leq_{\rm{lex}} (\varepsilon_1^\ast,\ldots,\varepsilon_{n-j}^\ast)\ for \ all\ 0\leq j < n$.

(2) For all $k\geq 1$, $\sigma ^k\varepsilon(1,\beta) <_{\rm{lex}} \varepsilon(1,\beta).$

(3) For each $1<\beta_1<\beta_2$, it holds that $\varepsilon^\ast(\beta_1)<_{\rm{lex}} \varepsilon^\ast(\beta_2)$. Consequently, for every $n \geq 1$, we have $$\Sigma_{\beta_1}^n \subseteq \Sigma_{\beta_2}^n\quad {\rm and}\quad \Sigma_{\beta_1} \subseteq \Sigma_{\beta_2}.$$
\end{theorem}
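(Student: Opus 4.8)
The plan is to deduce all three assertions from the \emph{greedy algorithm} $\varepsilon_n(x,\beta)=\lfloor\beta T_\beta^{n-1}x\rfloor$ together with two foundational facts: that the coding map $x\mapsto\varepsilon(x,\beta)$ is order preserving from $([0,1),<)$ into $(\A^{\N},<_{\mathrm{lex}})$, and that the infinite expansion $\varepsilon^\ast(\beta)$ of $1$ is the lexicographic supremum of $\{\varepsilon(y,\beta):y\in[0,1)\}$. Once these are in hand, statement (1) is a matter of restricting the supremum bound to finitely many coordinates (for necessity) and of running the greedy algorithm on a well-chosen point (for sufficiency), statement (2) is a direct application of strict monotonicity to $T_\beta^k 1<1$, and statement (3) follows by comparing the value of $\varepsilon^\ast(\beta_1)$ in the two bases.

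First I would establish monotonicity: for $x,y\in[0,1)$ with $x<y$ one has $\varepsilon(x,\beta)<_{\mathrm{lex}}\varepsilon(y,\beta)$. Let $m$ be the least index at which the two expansions differ (if there is none, then $x=y$). Because the first $m-1$ digits agree, the identity $T_\beta^{m-1}z=\beta^{m-1}z-\sum_{k=1}^{m-1}\varepsilon_k(z,\beta)\beta^{\,m-1-k}$ shows that on the common cylinder the map $z\mapsto T_\beta^{m-1}z$ is affine with positive slope, so $T_\beta^{m-1}x<T_\beta^{m-1}y$; applying the nondecreasing map $\xi\mapsto\lfloor\beta\xi\rfloor$ and recalling that the $m$-th digits differ forces $\varepsilon_m(x,\beta)<\varepsilon_m(y,\beta)$, hence $\varepsilon(x,\beta)<_{\mathrm{lex}}\varepsilon(y,\beta)$. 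The same comparison with $y=1$, reading $\varepsilon(1,\beta)$ off the greedy algorithm, together with the passage $y\uparrow1$, identifies $\varepsilon^\ast(\beta)$ as the claimed supremum; in particular $\sigma^n\varepsilon(x,\beta)=\varepsilon(T_\beta^n x,\beta)\leq_{\mathrm{lex}}\varepsilon^\ast(\beta)$ for every $x\in[0,1)$ and every $n\geq0$, since $T_\beta^n x\in[0,1)$.

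For the necessity half of (1), suppose $\omega=(\omega_1,\ldots,\omega_n)$ is a prefix of $\varepsilon(x,\beta)$. Then for each $0\leq j<n$ the block $(\omega_{j+1},\ldots,\omega_n)$ is the length-$(n-j)$ prefix of $\sigma^j\varepsilon(x,\beta)=\varepsilon(T_\beta^j x,\beta)$, and restricting the supremum bound $\varepsilon(T_\beta^j x,\beta)\leq_{\mathrm{lex}}\varepsilon^\ast(\beta)$ to the first $n-j$ coordinates yields $\sigma^j\omega\leq_{\mathrm{lex}}(\varepsilon_1^\ast,\ldots,\varepsilon_{n-j}^\ast)$. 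For the sufficiency half, I would take the candidate point $x=\sum_{k=1}^n\omega_k\beta^{-k}$ and set $x_j=\sum_{k=1}^{n-j}\omega_{j+k}\beta^{-k}$, so that $\beta x_j=\omega_{j+1}+x_{j+1}$; the hypotheses $\sigma^j\omega\leq_{\mathrm{lex}}(\varepsilon_1^\ast,\ldots,\varepsilon_{n-j}^\ast)$ guarantee $x_j\in[0,1)$ for every $j$, and since $x_{j+1}\in[0,1)$ it follows that $\lfloor\beta x_j\rfloor=\omega_{j+1}$ and $T_\beta x_j=x_{j+1}$, so the greedy algorithm applied to $x=x_0$ reproduces $\omega$ and $\omega\in\Sigma_\beta^n$. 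This conversion of the purely combinatorial inequalities into the analytic bounds $0\leq x_j<1$ that legitimise the greedy digits is the step I expect to be the main obstacle, since it is precisely where $\varepsilon^\ast(\beta)$ must be shown to act as the exact threshold below which a finite word represents a value strictly less than $1$. Statement (2) is then immediate: $\sigma^k\varepsilon(1,\beta)=\varepsilon(T_\beta^k 1,\beta)$ with $T_\beta^k 1<1$, so strict monotonicity gives $\sigma^k\varepsilon(1,\beta)<_{\mathrm{lex}}\varepsilon(1,\beta)$, the terminating (simple Parry) case being covered by observing that the tail is eventually $0^\infty$.

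Finally, for statement (3) I would use that, for a fixed nonzero sequence of nonnegative digits, the value $\pi_\beta(a)=\sum_k a_k\beta^{-k}$ is strictly decreasing in $\beta$, together with the fact (a byproduct of the greedy construction) that $\varepsilon(\cdot,\beta)$ is the lexicographically largest representation of its value among all sequences of nonnegative integer digits. Since $\pi_{\beta_1}(\varepsilon^\ast(\beta_1))=1=\pi_{\beta_2}(\varepsilon^\ast(\beta_2))$, the word $\varepsilon^\ast(\beta_1)$ represents in base $\beta_2$ a number $c:=\pi_{\beta_2}(\varepsilon^\ast(\beta_1))<1$; hence $\varepsilon^\ast(\beta_1)\leq_{\mathrm{lex}}\varepsilon(c,\beta_2)\leq_{\mathrm{lex}}\varepsilon^\ast(\beta_2)$, and equality would force $c=1$, so in fact $\varepsilon^\ast(\beta_1)<_{\mathrm{lex}}\varepsilon^\ast(\beta_2)$. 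The inclusions $\Sigma_{\beta_1}^n\subseteq\Sigma_{\beta_2}^n$ and $\Sigma_{\beta_1}\subseteq\Sigma_{\beta_2}$ now follow from (1): the admissibility inequalities for $\beta_1$ compare $\sigma^j\omega$ with truncations of $\varepsilon^\ast(\beta_1)$, and $\varepsilon^\ast(\beta_1)<_{\mathrm{lex}}\varepsilon^\ast(\beta_2)$ forces the corresponding truncations to satisfy $(\varepsilon_1^\ast(\beta_1),\ldots,\varepsilon_{n-j}^\ast(\beta_1))\leq_{\mathrm{lex}}(\varepsilon_1^\ast(\beta_2),\ldots,\varepsilon_{n-j}^\ast(\beta_2))$, so the same inequalities hold in base $\beta_2$ by transitivity.
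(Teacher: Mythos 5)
First, a point of comparison: the paper does not prove this theorem at all — it is quoted from Parry's 1960 paper \cite{P} as a known foundational result. So your attempt can only be judged on its own merits, and there it has genuine gaps. Both halves of your argument for (1) rest on things you assert but never establish, and those assertions are not routine consequences of your (correct) monotonicity lemma — they \emph{are} the substance of Parry's theorem. For the necessity half you invoke the ``foundational fact'' that $\varepsilon^\ast(\beta)$ is the lexicographic supremum of $\{\varepsilon(y,\beta):y\in[0,1)\}$, justified only by ``the passage $y\uparrow 1$.'' Monotonicity yields $\varepsilon(y,\beta)<_{\mathrm{lex}}\varepsilon(1,\beta)$, but when $\beta$ is a simple Parry number this is strictly weaker than the bound you need, since then $\varepsilon^\ast(\beta)<_{\mathrm{lex}}\varepsilon(1,\beta)$: for $\beta=2$ one has $\varepsilon(1,2)=(2,0,0,\ldots)$ and $\varepsilon^\ast(2)=(1,1,1,\ldots)$, and a sequence such as $(1,2,\ldots)$ lies strictly between them, so monotonicity alone does not exclude it. The missing ingredient is the estimate that $\sum_{j\geq 1}\varepsilon^\ast_{n+j}(\beta)\beta^{-j}\leq 1$ for every $n\geq 0$, which in the simple Parry case requires an explicit computation with the periodic word $(\varepsilon_1,\ldots,\varepsilon_m-1)^\infty$. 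The same estimate is exactly what makes your sufficiency step work: the claim that the inequalities $\sigma^j\omega\leq_{\mathrm{lex}}(\varepsilon_1^\ast,\ldots,\varepsilon_{n-j}^\ast)$ force $x_j\in[0,1)$ needs an induction on the word length applied to all shifts simultaneously (a naive digit-by-digit bound fails, since $(\lfloor\beta\rfloor+1)/\beta>1$ for non-integer $\beta$). You flag this step as ``the main obstacle,'' which is honest and correctly located, but a flagged obstacle is not a proof; as written, the two hard directions of (1) are assumed rather than proven.

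There is also a second, independent error at the very end: the inclusion $\Sigma_{\beta_1}\subseteq\Sigma_{\beta_2}$ does \emph{not} ``follow from (1).'' Statement (1) governs finite words, and for infinite sequences admissibility of every prefix does not imply admissibility of the sequence. Counterexample with $\beta=2$: every prefix $1^n$ is admissible (it is the beginning of the expansion of $1-2^{-n}$), yet $1^\infty$ is not the expansion of any $x\in[0,1)$ because its value is $1$. To pass from $\Sigma_{\beta_1}$ to $\Sigma_{\beta_2}$ you need the infinite-sequence criterion (that $\omega\in\Sigma_\beta$ if and only if $\sigma^n\omega<_{\mathrm{lex}}\varepsilon^\ast(\beta)$ \emph{strictly} for all $n\geq 0$), a statement you never formulate, and whose sufficiency direction again requires the analytic estimate above. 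By contrast, your monotonicity lemma, the deduction of (2) from it, and the lexicographic-maximality argument giving $\varepsilon^\ast(\beta_1)<_{\mathrm{lex}}\varepsilon^\ast(\beta_2)$ in (3) are sound, modulo the unproven supremum fact they depend on.
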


The estimation of the cardinality of the set $\Sigma_\beta^n$ was given by R\'{e}nyi \cite{R}. We will use the symbol $\sharp$ to denote the cardinality of a finite set in the remainder of this paper.
\begin{theorem}[R\'{e}nyi \cite{R}]\label{R}
For all $n \geq 1$,$$\beta^n \leq \sharp \Sigma_\beta^n \leq \frac{\beta^{n+1}}{\beta-1}.$$
\end{theorem}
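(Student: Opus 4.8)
The plan is to read both inequalities off from the geometry of the cylinder sets of the $\beta$-transformation. For an admissible word $w=(\varepsilon_1,\ldots,\varepsilon_n)\in\Sigma_\beta^n$ I write $I_n(w)=\{x\in[0,1):\varepsilon_j(x,\beta)=\varepsilon_j,\ 1\le j\le n\}$ for the associated cylinder. The standard facts I would first record are that each $I_n(w)$ is a half-open subinterval of $[0,1)$, that $T_\beta^n$ restricted to $I_n(w)$ is affine with slope $\beta^n$ and maps $I_n(w)$ bijectively onto some interval $[0,t_w)$ with $0<t_w\le 1$, so that $|I_n(w)|=t_w\beta^{-n}\le\beta^{-n}$, and that $\{I_n(w):w\in\Sigma_\beta^n\}$ is an exact partition of $[0,1)$ (each $x$ has a unique digit sequence). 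The lower bound is then immediate: from the partition and the length estimate, $1=\sum_{w}|I_n(w)|\le \sharp\Sigma_\beta^n\cdot\beta^{-n}$, whence $\sharp\Sigma_\beta^n\ge\beta^n$.

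For the upper bound I would call a cylinder \emph{full} when $t_w=1$ (equivalently $|I_n(w)|=\beta^{-n}$) and \emph{partial} otherwise, and split $\sharp\Sigma_\beta^n=A_n+B_n$ according to this dichotomy. Two facts drive the estimate. First, the full cylinders of rank $n$ are pairwise disjoint intervals of length exactly $\beta^{-n}$, so $A_n\beta^{-n}\le 1$ and hence $A_n\le\beta^n$. Second, and this is the key structural point, every rank-$n$ cylinder has \emph{at most one} partial child of rank $n+1$. Indeed, the rank-$(n+1)$ subcylinders of $I_n(w)$ are the nonempty preimages under the affine bijection $T_\beta^n|_{I_n(w)}$ of the sets $[k/\beta,(k+1)/\beta)\cap[0,t_w)$; such a child is full whenever the level-one branch $[k/\beta,(k+1)/\beta)$ lies entirely in $[0,t_w)$ and maps onto $[0,1)$, and the only index that can produce a partial child is the single $k^*$ with $k^*/\beta<t_w<(k^*+1)/\beta$, whose image is $[0,\beta t_w-k^*)$ with $\beta t_w-k^*<1$. (When $t_w$ is itself a point $m/\beta$, even that child is full and there is no partial child, which is harmless here.) Consequently $B_{n+1}\le A_n+B_n=\sharp\Sigma_\beta^n$, i.e. $B_n\le\sharp\Sigma_\beta^{n-1}$ for $n\ge1$, with $\sharp\Sigma_\beta^0=1$.

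Combining the two facts gives the recursion $\sharp\Sigma_\beta^n=A_n+B_n\le\beta^n+\sharp\Sigma_\beta^{n-1}$. Unrolling it down to $\sharp\Sigma_\beta^0=1$ yields $\sharp\Sigma_\beta^n\le\sum_{k=0}^{n}\beta^k=\frac{\beta^{n+1}-1}{\beta-1}<\frac{\beta^{n+1}}{\beta-1}$, which is the asserted upper bound.

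The length bookkeeping is routine; the part that needs care, and which I expect to be the main obstacle, is justifying the cylinder geometry rigorously, namely that each $I_n(w)$ really is an interval on which $T_\beta^n$ is an affine bijection onto $[0,t_w)$, together with the precise description of the children of a partial cylinder. I would establish this by induction on $n$, using that a rank-$(n+1)$ cylinder is the preimage under $T_\beta^n|_{I_n(w)}$ of a level-one cylinder intersected with $[0,t_w)$; alternatively one can appeal to the lexicographic admissibility characterisation of Theorem \ref{P}. Once the ``at most one partial child'' claim is in place, the counting is automatic, and no case distinction between integer and non-integer $\beta$ is needed (for integer $\beta$ every cylinder is full and the bound holds trivially).
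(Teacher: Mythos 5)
Your proof is correct, but note that the paper does not actually prove this statement: it is quoted directly from R\'enyi \cite{R} as background (Theorem \ref{R}), so there is no in-paper argument to compare against, and what you have written is a genuine self-contained proof of a fact the paper only cites. Your lower bound is the standard one (the cylinders of order $n$ partition $[0,1)$ and each has length at most $\beta^{-n}$). Your upper bound is a clean structural argument: the dichotomy full/partial, the bound $A_n\le\beta^n$ on full cylinders by disjointness, and the key observation that each cylinder has at most one partial child (the unique digit $k^*$ with $k^*/\beta<t_w<(k^*+1)/\beta$), giving $B_{n+1}\le A_n+B_n$ and hence the recursion $\sharp\Sigma_\beta^n\le\beta^n+\sharp\Sigma_\beta^{n-1}$, which unrolls to the slightly sharper bound $\sum_{k=0}^n\beta^k=\frac{\beta^{n+1}-1}{\beta-1}$. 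I verified the "one partial child" claim: the children of $I_n(w)$ are the preimages under the affine bijection $T_\beta^n|_{I_n(w)}$ of the sets $[k/\beta,(k+1)/\beta)\cap[0,t_w)$, and only the boundary index can fail to map onto $[0,1)$. The cylinder geometry you defer (each cylinder is an interval on which $T_\beta^n$ is affine of slope $\beta^n$ with image $[0,t_w)$) is precisely what this paper imports from Parry (Theorem \ref{P}) and Fan--Wang (Theorem \ref{AB}), so within the paper's toolkit the deferral is harmless. For comparison, the classical route (R\'enyi, Parry) is combinatorial rather than metric: one decomposes each admissible word by the first index $k$ at which it drops strictly below the infinite expansion of $1$, obtaining $\sharp\Sigma_\beta^n=1+\sum_{k=1}^n\varepsilon_k^\ast(\beta)\,\sharp\Sigma_\beta^{n-k}$, and then bounds the sum using $\sum_k\varepsilon_k^\ast(\beta)\beta^{-k}=1$ together with $\varepsilon_1^\ast(\beta)=\lfloor\beta\rfloor\ge\beta-1$. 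Your version trades that lexicographic bookkeeping for measure-theoretic bookkeeping on cylinder lengths; both are elementary, and yours makes the source of the geometric factor $\frac{1}{\beta-1}$ transparent as the accumulated count of non-full cylinders, one per level.
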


For an admissible word $\omega=(\omega_1,\ldots,\omega_n)$, the associated \emph{cylinder} of order $n$ is defined by $$I_n(\omega):=I_n(\omega,\beta)= \{x \in [0,1): \varepsilon_j(x,\b)=\omega_j,\ {\rm for\ all}\  1 \leq j \leq n\}.$$  The cylinder $I_n(\omega)$ is a left-closed and right-open interval (see \cite[Lemma 2.3]{AB}). Denote by  $|I_n(\omega)|$ the length of $I_n(\omega)$. We immediately get $|I_n(\omega)|\leq \beta^{-n}$. We write $I_n(x,\beta)$ as the cylinder of order $n$ containing the point $x\in [0,1)$ and write $|I_n(x,\beta)|$ as its length. For simplicity, $I_n(x)$ means $I_n(x,\beta)$ in the rest of this paper without otherwise specified. A cylinder of order $n$ is called \emph{full} if  $|I_n(\omega)| = \b^{-n}$ and the corresponding word of the full cylinder is said to be {\it full}.

Now we give some characterizations and properties of full cylinders.
\begin{theorem}[Fan and Wang \cite{AB}]\label{AB}
For any integer $n\geq 1$, let $\omega=(\omega_1,\ldots,\omega_n)$ be an admissible word.

(1)The cylinder $I_n(\omega)$ is full if and only if $T_\beta^n(I_n(\omega))=[0,1)$, if and only if  for any $m \geq 1$ and $\omega'= (\omega'_1,\ldots,\omega'_m)\in \Sigma_\b^m$, the concatenation $\omega \ast \omega'=(\omega_1, \ldots,\omega_n,\omega'_1,\ldots, \omega'_m)$ is still admissible.

(2) If $(\omega_1,\ldots,\omega_{n-1},\omega'_n)$ with $\omega'_n > 0$ is admissible, then the cylinder $I_n(\omega_1,\ldots,\omega_{n-1},\omega_n)$ is full for every  $0 \leq \omega_n < \omega'_n$.

(3) If $I_n(\omega)$ is full, then for any $(\omega'_1,\ldots,\omega'_m)\in \Sigma_\b^m$, we have $$|I_{n+m}(\omega_1,\ldots, \omega_n,\omega'_1,\ldots,\omega'_m)| = \beta^{-n}\cdot |I_m(\omega'_1,\ldots,\omega'_m)|.$$
\end{theorem}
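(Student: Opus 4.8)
The plan is to reduce all three parts to a single structural fact: on each cylinder $I_n(\omega)$ the iterate $T_\beta^n$ acts as an increasing affine map. Indeed, for $x\in I_n(\omega)$ the first $n$ digits are frozen at $\varepsilon_k(x,\beta)=\omega_k$, so iterating $T_\beta\xi=\beta\xi-\lfloor\beta\xi\rfloor$ and using that the integer subtracted at step $k$ is exactly $\omega_k$ yields, by induction on $n$,
\[
T_\beta^n x=\beta^n x-\sum_{j=1}^n\omega_j\beta^{n-j}=\beta^n\bigl(x-\ell\bigr),\qquad \ell:=\sum_{j=1}^n\frac{\omega_j}{\beta^j}.
\]
Since $I_n(\omega)$ is already known to be a left-closed right-open interval, and $\ell$ is the infimum of its defining set (hence its left endpoint), this identity shows that $T_\beta^n$ maps $I_n(\omega)=[\ell,\ell+|I_n(\omega)|)$ onto $[0,\beta^n|I_n(\omega)|)$, an increasing affine bijection of slope $\beta^n$. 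First I would establish this identity; everything else follows by reading off this image.

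For part (1), the image formula gives $T_\beta^n(I_n(\omega))=[0,c)$ with $c=\beta^n|I_n(\omega)|$, so $T_\beta^n(I_n(\omega))=[0,1)$ holds exactly when $c=1$, i.e.\ $|I_n(\omega)|=\beta^{-n}$, which is fullness; this is the first equivalence. For the second, note $x\in I_{n+m}(\omega\ast\omega')$ iff $x\in I_n(\omega)$ and $T_\beta^n x\in I_m(\omega')$, so admissibility (nonemptiness) of $\omega\ast\omega'$ is equivalent to $I_m(\omega')\cap[0,c)\neq\emptyset$. If $c=1$ this holds for every admissible $\omega'$, since the nonempty $I_m(\omega')$ lies in $[0,1)$. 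Conversely, if $c<1$, pick any $y^\ast\in(c,1)$ and its order-$m$ cylinder with $m$ so large that $\beta^{-m}<y^\ast-c$; this cylinder lies entirely in $(c,1)$ (its left endpoint exceeds $y^\ast-\beta^{-m}>c$), hence is disjoint from $[0,c)$, producing an admissible $\omega'$ with $\omega\ast\omega'$ inadmissible. Thus the concatenation property holds for all admissible $\omega'$ iff $c=1$, closing the chain of equivalences.

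For part (2), I would work on $I_{n-1}(\omega_1,\dots,\omega_{n-1})$, on which $T_\beta^{n-1}$ is affine onto $[0,\beta^{n-1}|I_{n-1}|)$; since $\varepsilon_n(x,\beta)=\lfloor\beta T_\beta^{n-1}x\rfloor$, the sub-cylinder with $n$-th digit $k$ is the $T_\beta^{n-1}$-preimage of $[k/\beta,(k+1)/\beta)\cap[0,\beta^{n-1}|I_{n-1}|)$. Admissibility of $(\omega_1,\dots,\omega_{n-1},\omega_n')$ with $\omega_n'>0$ forces $\omega_n'/\beta<\beta^{n-1}|I_{n-1}|$, so for every $0\le k<\omega_n'$ the whole interval $[k/\beta,(k+1)/\beta)$ lies inside the image, whence its preimage has length $\beta^{-(n-1)}\cdot\beta^{-1}=\beta^{-n}$, i.e.\ the cylinder is full. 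For part (3), fullness of $I_n(\omega)$ makes $T_\beta^n$ an affine bijection of slope $\beta^n$ from $I_n(\omega)$ onto $[0,1)$; as $I_{n+m}(\omega\ast\omega')$ is the $T_\beta^n$-preimage of $I_m(\omega')\subseteq[0,1)$, lengths scale by $\beta^{-n}$, giving $|I_{n+m}(\omega\ast\omega')|=\beta^{-n}|I_m(\omega')|$.

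The genuinely delicate point is the reverse direction of the second equivalence in part (1): one must exhibit, whenever $c<1$, an admissible word whose cylinder avoids $[0,c)$, and I expect the shrinking-cylinder argument above—using that a cylinder of order $m$ has length at most $\beta^{-m}$, a fact recorded before the statement—to be the crux. The remaining steps are purely the bookkeeping of an affine map and should be routine once the frozen-digit identity is in hand.
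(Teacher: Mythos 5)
The paper itself offers no proof of this statement: Theorem \ref{AB} is quoted as background from Fan and Wang, and Section 2 explicitly presents such facts without proofs. So there is no in-paper argument to compare yours against; what you have written is a self-contained proof, and it is essentially correct. The frozen-digit identity $T_\beta^n x=\beta^n(x-\ell)$ on $I_n(\omega)$ is right (a one-line induction using $\varepsilon_k(x,\beta)=\lfloor\beta T_\beta^{k-1}x\rfloor=\omega_k$ for $x\in I_n(\omega)$), the digit-shift relation $\varepsilon_{n+k}(x,\beta)=\varepsilon_k(T_\beta^n x,\beta)$ that you use implicitly in parts (1) and (3) is right, and the shrinking-cylinder construction for the reverse implication of the concatenation property in (1) works: the order-$m$ cylinder of a point $y^\ast\in(c,1)$ with $\beta^{-m}<y^\ast-c$ does lie entirely to the right of $c$, hence avoids $[0,c)$.

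The one step you assert rather than prove is the one everything else leans on: that $T_\beta^n(I_n(\omega))=[0,\beta^n|I_n(\omega)|)$, i.e.\ that the left endpoint of $I_n(\omega)$ equals $\ell=\sum_{j\le n}\omega_j\beta^{-j}$. Your stated justification --- $T_\beta^n x\ge 0$, hence $\ell$ is ``the infimum of its defining set'' --- only shows that $\ell$ is a \emph{lower bound}; a priori the infimum could be strictly larger, since the left endpoint $A$ satisfies $A=\ell+\beta^{-n}T_\beta^n A$ and nothing yet forces $T_\beta^n A=0$. What is needed is that $\ell\in I_n(\omega)$, i.e.\ that the greedy expansion of $\ell$ is $(\omega_1,\ldots,\omega_n,0^\infty)$. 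This is where admissibility of $\omega$ enters: for each $i$, the tail $\sum_{j>i}\omega_j\beta^{-(j-i)}$ is the value of the admissible word $(\omega_{i+1},\ldots,\omega_n)$ and hence is $<1$ (it is a lower bound for some point of $[0,1)$), so the greedy algorithm applied to $\ell$ reproduces the digits $\omega_1,\ldots,\omega_n$ followed by zeros. This fact is load-bearing, not cosmetic: your part (2) needs the image of $T_\beta^{n-1}$ to start at $0$ (otherwise $[k/\beta,(k+1)/\beta)$ for small $k$ need not lie inside it), and your disjointness construction in (1) only handles a deficit at the right end of the image, so it would not by itself rule out an image of the form $[c_1,1)$ with $c_1>0$. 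With this short lemma inserted (or by invoking the sharper form of the cylinder-structure lemma of Fan and Wang, which identifies the endpoints of $I_n(\omega)$), your proof is complete and correct.
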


In order to construct full words, we introduce a variable $\Gamma_n$ which is defined as follows. Recall that the infinite $\beta$-expansion of $1$ is $(\varepsilon_1^\ast(\beta), \varepsilon_2^\ast(\beta), \ldots)$. For every integer $n \geq 1$, define $$t_n=t_n(\b) := \max\{k \geq 1:\varepsilon_{n+1}^\ast(\beta) =\cdots= \varepsilon_{n+k}^\ast(\beta)=0\}.$$ If such $k$ does not exist, let $t_n=0$. Now let
\begin{equation}\label{g}
\G_n = \G_n(\b) := \max_{1 \leq k \leq n}t_k(\b).
\end{equation} Then we can check that $\G_n$ is a finite integer for all $n\geq 1$. Theorem \ref{AB} implies the following results which are important for construction of full words.
\begin{proposition}[Fan and Wang \cite{AB}]\label{re1}
(1) If both admissible words $(\omega_1,\ldots,\omega_n)$ and $(\omega'_1,\ldots,\omega'_m)$ are full, then the concatenation word $(\omega_1,\ldots,\omega_n,\omega'_1,\ldots,\omega'_m)$ is still full.

(2)For all $\ell\geq 1$, the word $0^\ell:=(\underbrace{0,\ldots,0}_\ell)$ is full. For any full word $(\omega_1,\ldots,\omega_n)$, the word $(\omega_1,\ldots,\omega_n,0^\ell)$ is also full.

(3) For any admissible word $(\omega_1,\ldots,\omega_n)$, the word $(\omega_1,\ldots,\omega_n,0^{\Gamma_n+1})$ is full.
\end{proposition}

Furthermore, Bugeaud and Wang \cite{BW} provided the following modified mass distribution principle which is of great importance in estimating the lower bound of the Hausdorff dimension of $E_{a,b}$.
\begin{theorem}[Bugeaud and Wang \cite{BW}]\label{mp}
Let $\mu$ be a Borel measure and $E$ be a Borel measurable set with $\mu(E)>0$. Assume that there exist a constant $c>0$ and an integer $N\geq 1$ such that for all $n\geq N$ and each cylinder $I_n$, the equality $\mu(I_n)\leq c|I_n|^s$ is valid. Then, $\dim_{\rm H}E\geq s.$
\end{theorem}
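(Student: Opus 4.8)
The plan is to reduce the statement to the positivity of the $s$-dimensional Hausdorff measure of $E$, since $\mathcal{H}^s(E)>0$ already forces $\dim_{\rm H}E\ge s$. Concretely, I would fix an arbitrary cover $\{U_i\}$ of $E$ by intervals of diameter at most $\delta$ and bound $\sum_i|U_i|^s$ from below by a positive constant independent of the cover; letting $\delta\to0$ then gives $\mathcal{H}^s(E)>0$. The hypothesis $\mu(I_n)\le c|I_n|^s$ concerns cylinders, not arbitrary intervals, so the crux is to pass from an interval cover to a cylinder cover of comparable $s$-cost. First I note that the hypothesis already forces $\mu$ to be non-atomic, since any point is an intersection of cylinders $I_n$ with $|I_n|\le\beta^{-n}\to0$ and hence has zero mass; this lets me ignore the (two) endpoints of each $U_i$ in what follows.

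The heart of the argument is the following covering claim: every interval $U$ with $|U|<\delta$ can be covered by cylinders $\{J_k\}$ with $\sum_k|J_k|^s\le K|U|^s$, where $K=K(\beta,s)$ is independent of $U$. To prove it I would cover the interior of $U$ by its \emph{maximal} subcylinders, i.e.\ for each interior point $x$ take the cylinder of smallest order that contains $x$ and is contained in $U$. By the nested, partitioning structure of cylinders these form a disjoint family tiling $U$, so $\sum_k|J_k|=|U|$. I then count them level by level: a maximal subcylinder of order $m$ has its order-$(m-1)$ parent meeting $U$ but not contained in $U$, so that parent contains an endpoint of $U$; since the order-$(m-1)$ cylinders are disjoint intervals, at most two of them contain an endpoint of $U$, and each cylinder has at most $|\mathcal{A}|=\lceil\beta\rceil+1$ one-digit admissible extensions. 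Hence there are at most $2|\mathcal{A}|$ maximal subcylinders of each order $m$, each of length at most $\beta^{-m}$, and for $|U|$ small only cylinders of large order occur (by Theorem \ref{R} each order contributes only finitely many cylinders, whose lengths are bounded below, so small intervals contain only high-order cylinders). Taking $n$ with $\beta^{-(n+1)}\le|U|<\beta^{-n}$ and summing the resulting geometric series,
$$\sum_k|J_k|^s\le\sum_{m\ge n}2|\mathcal{A}|\,\beta^{-ms}=\frac{2|\mathcal{A}|}{1-\beta^{-s}}\,\beta^{-ns}\le\frac{2|\mathcal{A}|\beta^s}{1-\beta^{-s}}\,|U|^s=:K|U|^s,$$
the decay in the level $m$ being exactly what absorbs the possibly many short, non-full cylinders.

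Granting the claim, the proof finishes quickly: applying it to each $U_i$ (whose constituent cylinders have order $\ge N$ once $\delta$ is small) and using the hypothesis gives $\mu(U_i)\le\sum_k\mu(J_k)\le c\sum_k|J_k|^s\le cK|U_i|^s$, whence $\mu(E)\le\sum_i\mu(U_i)\le cK\sum_i|U_i|^s$ and $\sum_i|U_i|^s\ge\mu(E)/(cK)>0$. I expect the covering claim to be the only genuine obstacle, and within it the control of the short (non-full) cylinders: the naive idea of covering $U$ by cylinders of a single order fails, because an interval of length $\beta^{-n}$ may meet very many short order-$n$ cylinders. The remedy is to cover by maximal subcylinders of \emph{varying} orders so that the per-level count stays bounded by the branching number $|\mathcal{A}|$; here Theorem \ref{AB} and Proposition \ref{re1} provide the nested full-cylinder structure underlying this, and in particular the lower bound $|I_n(\omega)|\ge\beta^{-(n+\Gamma_n+1)}$ coming from the full extension in Proposition \ref{re1}(3) is what quantifies ``large order'' in terms of $|U|$ and pins down the uniform constant $K$. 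Once this is secured, the convergence of the geometric series (valid precisely because $s>0$) completes the argument.
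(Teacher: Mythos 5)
The paper does not prove this statement: it is quoted verbatim from Bugeaud and Wang \cite{BW}, so your attempt can only be compared with the argument in that reference. Your overall architecture (tile an arbitrary interval $U$ of a Hausdorff cover by its maximal subcylinders, count them level by level via the two endpoints of $U$, and convert $\mu(U)\le\sum_k\mu(J_k)$ into an $s$-cost bound) is sound up to one step, and that step is a genuine gap. You claim that if $\beta^{-(n+1)}\le|U|<\beta^{-n}$ then every maximal subcylinder of $U$ has order at least $n$, and you justify this by ``each order contributes only finitely many cylinders, whose lengths are bounded below.'' That justification gives no uniform statement: the only general lower bound is $|I_m(\omega)|\ge\beta^{-(m+\Gamma_m+1)}$, and $\Gamma_m$ is unbounded in $m$ for a general $\beta>1$ (take $\varepsilon^\ast(\beta)$ with rapidly growing blocks of zeros). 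Hence a non-full cylinder of order $m<n$ can be much shorter than $\beta^{-m}$, fit inside $U$, and appear in your decomposition at level $m<n$. Your geometric series $\sum_{m\ge n}2|\mathcal{A}|\beta^{-ms}$ then omits up to $2|\mathcal{A}|$ disjoint subcylinders at each level $m_0\le m<n$, whose total $s$-cost is, by the power-mean inequality, as large as $\bigl(2|\mathcal{A}|(n-m_0)\bigr)^{1-s}|U|^s$ with $n-m_0$ not bounded in terms of $\beta$ and $s$ alone. This is not a repairable oversight within your stated goal: the hypotheses of the theorem do \emph{not} imply $\mathcal{H}^s(E)>0$, only $\dim_{\rm H}E\ge s$, so no argument can produce a cover-independent constant $K$ and a positive lower bound for $\inf\sum_i|U_i|^s$.

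The fix is to accept a logarithmic loss. Your per-level count of $2|\mathcal{A}|$ is correct, and since at most $n$ levels below $n$ can occur (a maximal subcylinder has positive length $\le|U|<\beta^{-n}$, and order $0$ is the whole interval), one gets $\sum_k|J_k|^s\le C\bigl(1+\log_\beta(1/|U|)\bigr)^{1-s}|U|^s$. This still proves the theorem: for any $s'<s$ and $|U_i|$ small, $\mu(U_i)\le cC|U_i|^{s'}$, hence $\mathcal{H}^{s'}(E)\ge\mu(E)/(cC)>0$ and $\dim_{\rm H}E\ge s'$; let $s'\to s$. For comparison, Bugeaud and Wang work with a ball $B(x,r)$, $\beta^{-(n+1)}\le r<\beta^{-n}$, cover it by cylinders of the \emph{single} order $n$, and bound their number by $O(n)$ using their distribution theorem (among any $n+1$ consecutive cylinders of order $n$, one is full, so runs of short non-full cylinders have length at most $n$); H\"older then gives $\mu(B(x,r))\le c'n^{1-s}r^s$ and the standard local-density lemma concludes. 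Either route works, but both necessarily carry the unbounded polylogarithmic multiplicity that your uniform constant $K$ tries to wish away.
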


Now we will introduce some results on Diophantine approximation. We fist give the following exponents of approximation.

Shen and Wang \cite{SW} obtained the following theorem which gives the dimensional results of the set of points with classical Diophantine property.
\begin{theorem}[Shen and Wang \cite{SW}] \label{sw}
Let $\beta>1$. Let $0\leq v\leq +\infty$. Then $$\dim_{\rm H}\{x\in[0,1]:v_{\beta}(x)\geq v\}=\frac{1}{1+v}.$$
\end{theorem}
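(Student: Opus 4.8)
The plan is to recognize this as a Jarn\'{\i}k--Besicovitch computation for a $\limsup$ set, after translating the analytic approximation condition into a combinatorial statement about runs of zeros in the $\beta$-expansion.

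First I would set up the dictionary between the condition $T_\beta^n x\le\beta^{-m}$ and the digits of $x$. Writing the $\beta$-expansion of $x$ as $(\varepsilon_1,\varepsilon_2,\ldots)$ one has $T_\beta^n x=\sum_{k\ge1}\varepsilon_{n+k}\beta^{-k}$. If the digits $\varepsilon_{n+1},\ldots,\varepsilon_{n+m}$ all vanish, then $T_\beta^n x=\beta^{-m}T_\beta^{n+m}x\le\beta^{-m}$; conversely, since $\varepsilon_{n+j}=\lfloor\beta T_\beta^{n+j-1}x\rfloor$, the inequality $T_\beta^n x\le\beta^{-m}$ forces $\varepsilon_{n+1}=\cdots=\varepsilon_{n+m-1}=0$. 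Hence, up to an additive $O(1)$ in the exponent, the event $\{T_\beta^n x\le\beta^{-nv'}\}$ says that $x$ begins a run of at least $\lfloor nv'\rfloor$ consecutive zeros at position $n+1$, and
\[
\{x:v_\beta(x)\ge v\}=\bigcap_{0<v'<v}\ \bigcap_{N\ge1}\ \bigcup_{n\ge N}A_n(v'),\qquad A_n(v')=\{x:T_\beta^n x\le\beta^{-nv'}\},
\]
a $\limsup$ set.

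For the upper bound I would use the natural cover by cylinders. On a full cylinder $I_n(\omega)$ of order $n$ the map $T_\beta^n$ is an affine bijection onto $[0,1)$ of slope $\beta^n$ (Theorem \ref{AB}(1)), so $A_n(v')\cap I_n(\omega)$ is an interval of length $\beta^{-n}\cdot\beta^{-nv'}=\beta^{-n(1+v')}$, and a non-full cylinder contributes no more. Since $\sharp\Sigma_\beta^n\le\beta^{n+1}/(\beta-1)$ by Theorem \ref{R}, for every $N$ the family $\{A_n(v')\cap I_n(\omega):n\ge N,\ \omega\in\Sigma_\beta^n\}$ covers the $\limsup$ set with $s$-dimensional sum at most $\sum_{n\ge N}\frac{\beta^{n+1}}{\beta-1}\beta^{-n(1+v')s}$, which converges whenever $s>\frac{1}{1+v'}$. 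Thus $\dim_{\rm H}\{v_\beta\ge v\}\le\frac{1}{1+v'}$, and letting $v'\uparrow v$ gives the bound $\frac{1}{1+v}$.

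For the lower bound I would build a Cantor subset of $\{v_\beta\ge v'\}$ and apply the mass distribution principle (Theorem \ref{mp}). Fixing $v'<v$ and $s<\frac{1}{1+v'}$, I choose a rapidly increasing sequence $n_1\ll n_2\ll\cdots$; on the free block preceding each $n_k$ I let the prefix range over admissible words, appending a bounded block of zeros (Proposition \ref{re1}(3)) so that the surviving prefixes are full, and on the block immediately after position $n_k$ I insert $\lceil n_k v'\rceil$ forced zeros, which by the dictionary guarantees $T_\beta^{n_k}x\le\beta^{-n_k v'}$ for every $x$ in the set and hence $v_\beta(x)\ge v'$. Spreading mass uniformly over the surviving cylinders at each level defines a measure $\mu$; counting admissible words in the free blocks by Theorem \ref{R} and using Theorem \ref{AB}(3) for the lengths of full cylinders, I would verify $\mu(I_n)\le c|I_n|^s$ for all large $n$, whence $\dim_{\rm H}\{v_\beta\ge v\}\ge s$ by Theorem \ref{mp}. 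Letting $s\uparrow\frac{1}{1+v'}$ and then $v'\uparrow v$ finishes the argument.

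The hard part will be the lower bound, specifically the uniform estimate $\mu(I_n)\le c|I_n|^s$ at \emph{every} order $n$, not merely the distinguished levels $n_k$: inside a forced zero-block the measure of a cylinder stays constant while its length keeps shrinking, so the binding constraint occurs at the end of these blocks and is exactly what pins the exponent at $\frac{1}{1+v'}$. Making the prefixes full via Proposition \ref{re1} is essential so that cylinder lengths equal $\beta^{-n}$ on the nose, and a short case analysis according to whether $n$ lies in a free block or a forced zero-block is needed to close the estimate.
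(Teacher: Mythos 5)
First, note that the paper does not prove this statement at all: it is quoted verbatim from Shen and Wang \cite{SW}, so there is no in-paper argument to compare against. Your upper bound is fine and is the standard one: the translation between $T_\beta^n x\le\beta^{-m}$ and a run of zeros, the cover of $A_n(v')$ by one interval of length at most $\beta^{-n(1+v')}$ per cylinder of order $n$, the count $\sharp\Sigma_\beta^n\le\beta^{n+1}/(\beta-1)$, and the limit $v'\uparrow v$ all go through.

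The lower bound, however, has a genuine logical gap. You fix $v'<v$, build a Cantor set whose points satisfy $T_\beta^{n_k}x\le\beta^{-n_kv'}$, and conclude only that these points lie in $\{v_\beta\ge v'\}$ — which is a \emph{superset} of $\{v_\beta\ge v\}$. A lower bound for the dimension of the larger set says nothing about the smaller one, and letting $v'\uparrow v$ afterwards does not repair this, since the Cantor sets for different $v'$ need not meet $\{v_\beta\ge v\}$ at all. The fix is to run the construction with $v$ itself (forced blocks of $\lceil n_k v\rceil$ zeros, so that $v_\beta(x)\ge v$ on the nose), which still yields local dimension $\tfrac{n_k}{n_k(1+v)}\to\tfrac{1}{1+v}$ at the ends of the forced blocks when $n_k$ grows fast enough. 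A second, more technical problem is your appeal to Proposition \ref{re1}(3) to make the free prefixes full "at bounded cost'': the padding there is $0^{\Gamma_n+1}$, and for general $\beta$ the quantity $\Gamma_n$ is neither bounded nor $o(n)$ (take $\varepsilon^\ast(\beta)=(1,0^{a_1},1,0^{a_2},\ldots)$ with $a_k$ growing super-exponentially), so this padding can destroy the dimension count. The standard remedy — and the one this paper itself uses for its own Cantor constructions in Section 3.2 — is to approximate $\beta$ from below by $\beta_N$ and draw the free blocks from $\Sigma_{\beta_N}^\ast$, so that fullness is automatic by Proposition \ref{re2} at a cost of only $N$ symbols, and then let $N\to\infty$ at the end.
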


Bugeaud and Liao \cite{BL} studied the set of points with uniform Diophantine properties and established the theorem as follows.
\begin{theorem}[Bugeaud and Liao \cite{BL}]\label{bl}
Let $\beta>1$. Let $0<\hat{v}<1$ and $v>0$. If $v<\frac{\hat{v}}{1-\hat{v}}$, then the set $$U_{\beta}(\hat{v},v):=\{x\in[0,1]:\hat{v}_{\beta}(x)= \hat{v},\ v_{\beta}(x)=v\}$$ is empty. Otherwise, we have $$\dim_{\rm H}U_{\beta}(\hat{v},v)=\frac{v-(1+v)\hat{v}}{(1+v)(v-\hat{v})}.$$ Moreover, $$\dim_{\rm H}\{x\in[0,1]:\hat{v}_{\beta}(x)=\hat{v}\}=\left(\frac{1-\hat{v}}{1+\hat{v}}\right)^2.$$
\end{theorem}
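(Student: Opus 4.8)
The plan is to reduce the two Diophantine conditions defining $U_\beta(\hat v,v)$ to a combinatorial statement about maximal runs of zeros in the $\beta$-expansion, and then to compute the Hausdorff dimension by the usual two-sided scheme: a covering argument for the upper bound, and a Cantor construction together with the mass distribution principle (Theorem \ref{mp}) for the lower bound. The starting point is the elementary observation that a block of zeros $\varepsilon_{n+1}(x,\beta)=\cdots=\varepsilon_{n+m}(x,\beta)=0$ forces $T_\beta^n x\le \frac{\lceil\beta\rceil}{\beta-1}\beta^{-m}$, while conversely $T_\beta^n x\le \beta^{-m}$ forces a run of about $m$ zeros beginning at position $n$. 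Writing the maximal runs of zeros as blocks of length $\ell_k$ starting at positions $p_k$, one checks that $v_\beta(x)$ is governed by $\limsup_k \ell_k/p_k$ (the longest runs relative to where they begin) and $\hat v_\beta(x)$ by the worst gap between consecutive effective runs. Under the correspondence $v=b/(1-b)$, $\hat v=a/(1-a)$ the target exponent $\frac{v-(1+v)\hat v}{(1+v)(v-\hat v)}$ equals $1-\frac{b^2(1-a)}{b-a}$, so $U_\beta(\hat v,v)$ is, after this change of variables, exactly the level set $E_{a,b}$ of Theorem \ref{ab}.

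First I would settle the emptiness clause. The uniform condition $\hat v_\beta(x)=\hat v$ forces, for every large $N$, a run of length roughly $N\hat v$ to be available among the first $N$ digits, so runs must recur with a controlled frequency. Tracking a run of length $\ell_k$ starting at $p_k$, this recurrence constraint, combined with the location of the longest runs, forces $\limsup_k \ell_k/p_k\ge \hat v/(1-\hat v)$, i.e. $v\ge \hat v/(1-\hat v)$. When $v<\hat v/(1-\hat v)$ the two requirements are incompatible and $U_\beta(\hat v,v)=\emptyset$; this is the $\beta$-expansion analogue of a Dirichlet/transference inequality and is proved by a direct pigeonhole bookkeeping of the run positions. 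For the upper bound on the dimension I would cover $U_\beta(\hat v,v)$ by the cylinders determined by the run structure: the classical condition provides, infinitely often, a position $n$ followed by a run of about $nv$ zeros, while the uniform condition bounds the gaps between such events. Grouping the admissible prefixes up to each such position and appending the forced zero blocks, I would build, at each scale, a cover whose cardinality is estimated by R\'enyi's bound $\sharp\Sigma_\beta^n\le \beta^{n+1}/(\beta-1)$ (Theorem \ref{R}) and whose diameters are controlled by the full-cylinder identities of Theorem \ref{AB} and Proposition \ref{re1}. Optimizing the resulting series in $s$ produces $\frac{v-(1+v)\hat v}{(1+v)(v-\hat v)}$ as an upper bound, the optimization being identical to the one behind Theorem \ref{ab}.

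The lower bound, which is the main obstacle, requires constructing a Cantor subset of $U_\beta(\hat v,v)$ of the correct dimension. I would prescribe an increasing sequence of positions at which runs of zeros of carefully chosen lengths are inserted so as to pin down $v_\beta(x)=v$ (via sparse, long runs) and $\hat v_\beta(x)=\hat v$ exactly (via shorter runs placed densely enough that no large $N$ violates the uniform estimate, yet not so dense as to overshoot $\hat v$ or to destroy entropy). Between consecutive prescribed blocks the digits are chosen freely among admissible words, and Proposition \ref{re1} is used to make the connecting words full, so that cylinder lengths multiply exactly and the construction stays admissible for every $\beta>1$. Defining the natural measure that distributes mass uniformly over the free choices and estimating $\mu(I_n)$ against $|I_n|^s$ on every cylinder, the mass distribution principle (Theorem \ref{mp}) yields the matching lower bound.

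The delicate point throughout is the simultaneous control of the two exponents: $\hat v_\beta(x)$ is a ``for all $N$'' quantity, so the spacing of the auxiliary runs must be tuned to within lower-order terms, and checking that this tuning is compatible with the entropy count is the crux of the argument. Finally, the ``moreover'' statement follows by writing $\{x:\hat v_\beta(x)=\hat v\}=\bigcup_{v\ge \hat v/(1-\hat v)}U_\beta(\hat v,v)$ and maximizing $\frac{v-(1+v)\hat v}{(1+v)(v-\hat v)}$ over $v$ (countable stability of Hausdorff dimension reduces the uncountable union to a supremum); a short computation shows the supremum $\left(\frac{1-\hat v}{1+\hat v}\right)^2$ is attained at $v=2\hat v/(1-\hat v)$, in agreement with the value $(1-2a)^2$ of Corollary \ref{a}.
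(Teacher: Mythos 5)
This statement is Theorem \ref{bl}, which the paper does not prove: it is quoted verbatim from Bugeaud and Liao \cite{BL} and used as an input to Theorem \ref{ab}. So there is no internal proof to compare against, and your opening move --- identifying $U_\beta(\hat v,v)$ with the level set $E_{a,b}$ via Lemmas \ref{ship} and \ref{ship1} and the identity $\frac{v-(1+v)\hat v}{(1+v)(v-\hat v)}=1-\frac{b^2(1-a)}{b-a}$ --- runs exactly opposite to the paper's logic, which deduces $\dim_{\rm H}E_{a,b}$ \emph{from} this theorem; you must take care that the identification stays a dictionary and not a hidden appeal to Theorem \ref{ab}. That said, your translation is correct (the change of variables, the emptiness threshold $v<\hat v/(1-\hat v)\Leftrightarrow a>b/(1+b)$, and the optimum $v=2\hat v/(1-\hat v)$ giving $\left(\frac{1-\hat v}{1+\hat v}\right)^2$ all check out), and your lower-bound plan is essentially the Cantor construction the paper carries out in Section 3.2 (prescribed zero blocks of length $m_k-n_k$ ending at positions $m_k$, free full words in between, uniformly distributed mass, Theorem \ref{mp}); the paper even remarks that this construction independently recovers the lower bound of the Bugeaud--Liao formula.

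The genuine gap is the upper bound. A cover built only from the events ``a run of length about $nv$ starts at position $n$'' uses the condition $v_\beta(x)\ge v$ alone and yields $\dim_{\rm H}\le 1/(1+v)=1-b$, i.e.\ Theorem \ref{sw}; it cannot see the uniform exponent and cannot produce $\frac{v-(1+v)\hat v}{(1+v)(v-\hat v)}$, which is strictly smaller. The substance of the Bugeaud--Liao argument is to exploit the ``for all $N$'' condition to show that between two consecutive long runs the digits are \emph{not} free --- intermediate runs of prescribed relative length are forced at controlled positions --- and then to count admissible words subject to that nested system of constraints before optimizing in $s$. Your sketch asserts that ``optimizing the resulting series in $s$'' gives the right exponent without specifying this count, and that is precisely the step that does not follow from R\'enyi's bound (Theorem \ref{R}) applied to the naive cover. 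A second, smaller gap: in the ``moreover'' clause, countable stability of Hausdorff dimension does not apply to the uncountable union $\bigcup_{v}U_\beta(\hat v,v)$; one must either discretize $v$ into countably many ranges and use monotonicity of the dimension bound on each, or run a separate covering argument, and neither is indicated.
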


\section{ Proofs of Theorem \ref{ab} and Corollary \ref{a} }
Notice that for all $\beta>1$, we have $$\left\{x\in [0,1]:\lim\limits_{n\rightarrow \infty}\frac{r_n(x,\beta)}{\log_\beta n}=1\right\}\subseteq E_{0,0}.$$ In \cite{TYZ}, Tong, Yu and Zhao showed that the set $\left\{x\in [0,1]:\lim\limits_{n\rightarrow \infty}\frac{r_n(x,\beta)}{\log_\beta n}=1\right\}$ is of full Lebesgue measure. As a result, the set $E_{0,0}$ has full Lebesgue measure. Hence, we only need to study the case that $0\leq a\leq 1,\ 0< b\leq 1$. Before we give the proof of Theorem \ref{ab}, we uncover the relationship between run-length function and Diophantine approximation.
\subsection{Run-length function and Diophantine approximation}
\begin{lemma}\label{ship}
Let $\beta>1$. For all $x\in[0,1]$, for any $0<a<1$, we have $\liminf\limits_{n\rightarrow \infty}\frac{r_n(x,\beta)}{n}=a$ if and only if $\hat{v}_{\beta}(x)=\frac{a}{1-a}$.
\end{lemma}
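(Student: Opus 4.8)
The plan is to translate the purely combinatorial quantity $r_n(x,\beta)$ into an analytic statement about the smallness of the orbit $\left(T_\beta^i x\right)_{i\ge 0}$, and then to read off both the $\liminf$ of $r_n/n$ and the uniform exponent $\hat v_\beta(x)$ from the same data. First I would record the elementary correspondence
$$\varepsilon_{i+1}(x,\beta)=\cdots=\varepsilon_{i+j}(x,\beta)=0 \iff T_\beta^i x < \beta^{-j},$$
valid for every $i\ge 0$ and $j\ge 1$. The forward implication follows because a leading zero digit forces $T_\beta^i x \in [0,\beta^{-1})$, so $T_\beta^{i+1}x = \beta\,T_\beta^i x$; iterating gives $T_\beta^{i+j} x = \beta^j T_\beta^i x \in [0,1)$, whence $T_\beta^i x < \beta^{-j}$. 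The converse is the same computation read backwards: if $T_\beta^i x < \beta^{-j}$ then $\beta^k T_\beta^i x < 1$ for $k \le j$, forcing $\varepsilon_{i+1},\dots,\varepsilon_{i+j}$ to vanish. Consequently
$$r_n(x,\beta)=\max\left\{\,j\ge1:\ \exists\, 0\le i\le n-j,\ T_\beta^i x < \beta^{-j}\,\right\},$$
so a long run of zeros in the prefix of length $n$ is precisely the statement that some early iterate $T_\beta^i x$ is exponentially small.

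Next I would establish the equality $a=\frac{\hat v_\beta(x)}{1+\hat v_\beta(x)}$, which is algebraically the same as $\hat v_\beta(x)=\frac{a}{1-a}$, by proving two inequalities. For "$\ge$" fix $\hat v'<\hat v_\beta(x)$. By the definition of $\hat v_\beta(x)$ as a supremum, for all large $N$ there is $1\le n\le N$ with $T_\beta^n x\le \beta^{-N\hat v'}$, which via the correspondence produces a block of at least $\lfloor N\hat v'\rfloor$ zeros ending at a position $\le N+N\hat v'=N(1+\hat v')$. Given a large $m$, choosing $N=\lfloor m/(1+\hat v')\rfloor$ makes $N(1+\hat v')\le m$, so $r_m(x,\beta)\ge \lfloor N\hat v'\rfloor \approx \frac{\hat v'}{1+\hat v'}\,m$, hence $\liminf_n r_n/n\ge \frac{\hat v'}{1+\hat v'}$; letting $\hat v'\uparrow \hat v_\beta(x)$ gives $a\ge\frac{\hat v_\beta(x)}{1+\hat v_\beta(x)}$.

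For the reverse inequality I would show that $\liminf_n r_n/n=a$ forces $\hat v_\beta(x)\ge \frac{a}{1-a}$. Fix $w<\frac{a}{1-a}$ and a small $\epsilon>0$. Since the $\liminf$ equals $a$, for all large $n$ the prefix of length $n$ contains a zero block of length $>(a-\epsilon)n$, say at positions $i+1,\dots,i+j$ with $j>(a-\epsilon)n$ and $i\le(1-a+\epsilon)n$; the correspondence then gives $T_\beta^i x<\beta^{-(a-\epsilon)n}$. To certify $\hat v_\beta(x)\ge w$ I must produce, for every large $N$, an index $1\le n'\le N$ with $T_\beta^{n'}x\le\beta^{-Nw}$. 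I would do this by selecting the block index $i$ above with $n$ lying in the window $\frac{Nw}{a-\epsilon}\le n\le \frac{N}{1-a+\epsilon}$: this window is nonempty once $w\le\frac{a-\epsilon}{1-a+\epsilon}$, which holds for small $\epsilon$ because $w<\frac{a}{1-a}$, and its length grows linearly in $N$, so it contains an integer for large $N$; the resulting $n'=i$ satisfies $i\le N$ and $T_\beta^i x<\beta^{-(a-\epsilon)n}\le\beta^{-Nw}$. Letting $\epsilon\downarrow0$ and $w\uparrow\frac{a}{1-a}$ yields $\hat v_\beta(x)\ge\frac{a}{1-a}$, which together with the previous paragraph establishes the equivalence.

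The routine points that still need attention are the integer-part adjustments (replacing $N\hat v'$ and $(a-\epsilon)n$ by their floors, and reconciling strict with non-strict inequalities in the correspondence) and the boundary case $i=0$, where the block begins at the first digit and one passes to $T_\beta^1 x=\beta x<\beta^{1-j}$ to obtain a genuine index $n'\ge1$ with negligible loss. The genuine obstacle, however, is the two-sided index bookkeeping in the last paragraph: one must convert an "eventually true" statement about the maximal run by time $n$ into the uniform "for every large $N$" statement defining $\hat v_\beta(x)$, and this requires showing that the admissible window of indices $n$ is simultaneously nonempty and long enough to contain an integer, uniformly in $N$. Matching these two different quantifier structures is where the heart of the argument lies.
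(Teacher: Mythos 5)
Your proposal is correct and follows essentially the same route as the paper: both arguments rest on the correspondence between a run of $j$ zeros starting after position $i$ and the inequality $T_\beta^i x<\beta^{-j}$, together with the same index bookkeeping that converts a small iterate at time $n\le N$ into a run ending by position $N(1+\hat v)$ and vice versa. The only difference is organizational --- you package everything as the two unconditional inequalities $\liminf_n r_n(x,\beta)/n\ge \hat v_\beta(x)/(1+\hat v_\beta(x))$ and $\hat v_\beta(x)\ge L/(1-L)$ (with $L$ the limit inferior), which yield the identity and hence both implications at once, whereas the paper proves the forward implication by contradiction and the backward one directly from the same two estimates.
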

\begin{proof}
$\Rightarrow$) Assume that  $\liminf\limits_{n\rightarrow \infty}\frac{r_n(x,\beta)}{n}=a$, we will give our proof by contradiction.

On the one hand, suppose that $\hat{v}_{\beta}(x)<\frac{a}{1-a}$, then we have $v_0=\frac{a}{2(1-a)}+\frac{\hat{v}_{\beta}(x)}{2}>\hat{v}_{\beta}(x)$. By the definition of $\hat {v}_{\beta}(x)$, there is a sequence $\{n_k\}_{k=1}^\infty$ such that, for all $1\leq n\leq n_k$, $$T_\beta^{n}x>\beta^{-v_0n}\geq \beta^{-(\lfloor v_0n\rfloor+1)}.$$ So it holds that $$r_{n_k+\lfloor v_0n_k\rfloor}(x,\beta)<\lfloor v_0n_k\rfloor+1.$$ This implies that $$\liminf_{n\rightarrow \infty}\frac{r_n(x,\beta)}{n}\leq \lim_{k\rightarrow\infty}\frac{r_{n_k+\lfloor v_0n_k\rfloor}(x,\beta)}{n_k+\lfloor v_0n_k\rfloor}\leq\lim_{k\rightarrow\infty} \frac{\lfloor v_0n_k\rfloor+1}{n_k+\lfloor v_0n_k\rfloor}=\frac{v_0}{1+v_0}=\frac{2a+\hat{v}_{\beta}(x)(1-a)-a}{2+\hat{v}_{\beta}(x)(1-a)-a}<a,$$ where the last inequality follows from
$$\frac{a-x}{b-x}<\frac{a}{b}, {\rm\ for\ all\ } 0\leq a<b,\ x>0.$$ A contradiction with $\liminf\limits_{n\rightarrow \infty}\frac{r_n(x,\beta)}{n}=a$. So $\hat{v}_{\beta}(x)\geq\frac{a}{1-a}$.

On the other hand, suppose that $\hat{v}_{\beta}(x)>\frac{a}{1-a}$, then $v_0=\frac{a}{2(1-a)}+\frac{\hat{v}_{\beta}(x)}{2}<\hat{v}_{\beta}(x)$. The definition of $\hat {v}_{\beta}(x)$ implies that for all $N\gg1$, there exists $1 \leq n \leq N$, such that $$T_\beta^{n}x\leq\beta^{-v_0N}.$$
Then for all $k=N+\lfloor v_0N\rfloor+1\gg 1$, we have $$r_k(x,\beta)\geq \lfloor v_0N\rfloor.$$ This implies that $$\liminf_{k\rightarrow \infty}\frac{r_k(x,\beta)}{k} \geq\lim_{N\rightarrow\infty}\frac{\lfloor v_0N\rfloor}{N+\lfloor v_0N\rfloor+1}= \frac{v_0}{1+v_0}= \frac{2a+\hat{v}_{\beta}(x)(1-a)-a}{2+\hat{v}_{\beta}(x)(1-a)-a}>a,$$ where the last inequality follows from \begin{equation}\label{f}
\frac{a+x}{b+x}> \frac{a}{b}\ {\rm for\ all\ } 0\leq a<b,\ x>0.
\end{equation} This contradicts with $\liminf\limits_{n\rightarrow \infty}\frac{r_n(x,\beta)}{n}=a$. Consequently, $\hat{v}_{\beta}(x)\geq\frac{a}{1-a}$.

\bigskip
$\Leftarrow$) On the one side, if $\hat{v}_{\beta}(x)=\frac{a}{1-a}$,  by the definition of $\hat{v}_{\beta}(x)$, for all $0<\delta<\frac{a}{2(1-a)}$, let $v_1=\frac{a}{1-a}-\delta$. Then for every $N\gg1$, there exists $1 \leq n \leq N$, such that $$T_\beta^{n}x\leq\beta^{-v_1N}.$$  Then for all $k=N+\lfloor v_1N\rfloor+1\gg 1$, we have $$r_k(x,\beta)\geq \lfloor v_1N\rfloor.$$ Then  $$\liminf_{k\rightarrow \infty}\frac{r_k(x,\beta)}{k}\geq \lim_{N\rightarrow\infty} \frac{\lfloor v_1N\rfloor+1}{N+\lfloor v_1N\rfloor}=\frac{v_1}{1+v_1}=\frac{a-(1-a)\delta}{1+(1-a)\delta}.$$ Letting $\delta\rightarrow0$, we have $\liminf\limits_{n\rightarrow \infty}\frac{r_n(x,\beta)}{n}\geq a.$

On the other side, the definition of $\hat{v}_{\beta}(x)$ implies that, for every $v_1'=\frac{a}{1-a}+\delta$ with $\delta>0$, there exists a sequence $\{n_k\}_{k=1}^\infty$ such that, for every $1\leq n\leq n_k$, it holds that $$T_\beta^{n}x>\beta^{-v_1'n}\geq \beta^{-(\lfloor v_1'n\rfloor+1)}.$$ This means $$r_{n_k+\lfloor v_1'n_k\rfloor}(x,\beta)<\lfloor v_1'n_k\rfloor+1.$$ Therefore, $$\liminf_{n\rightarrow \infty}\frac{r_n(x,\beta)}{n}\leq \lim_{k\rightarrow\infty}\frac{r_{n_k+\lfloor v_1'n_k\rfloor}(x,\beta)}{n_k+\lfloor v_1'n_k\rfloor}\leq \lim_{k\rightarrow\infty} \frac{\lfloor v_1'n_k\rfloor+1}{n_k+\lfloor v_1'n_k\rfloor}=\frac{v_1'}{1+v_1'}=\frac{a+(1-a)\delta}{1+(1-a)\delta}.$$ Similarly, by letting $\delta\rightarrow0$, we obtain $\liminf\limits_{n\rightarrow \infty}\frac{r_n(x,\beta)}{n}\leq a.$

Thus, we conclude that $\liminf\limits_{n\rightarrow \infty}\frac{r_n(x,\beta)}{n}= a.$
\end{proof}
\begin{lemma}\label{ship1}
Let $\beta>1$. For all $x\in[0,1)$, for each $0<b<1$, we have $\limsup\limits_{n\rightarrow \infty}\frac{r_n(x,\beta)}{n}=b$ if and only if $v_\beta(x)=\frac{b}{1-b}$.
\end{lemma}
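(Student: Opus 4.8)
The plan is to mirror the proof of Lemma \ref{ship}, exchanging the roles of $\liminf$ and $\hat v_\beta$ by $\limsup$ and $v_\beta$ and dualizing the quantifiers accordingly. The whole argument rests on one elementary dictionary between runs of zeros and the smallness of the orbit: a block $\varepsilon_{i+1}(x,\beta)=\cdots=\varepsilon_{i+\ell}(x,\beta)=0$ occurs if and only if $T_\beta^i x<\beta^{-\ell}$, because $T_\beta^i x=\beta^{-\ell}T_\beta^{i+\ell}x$ with $0\le T_\beta^{i+\ell}x<1$. Together with the elementary fact that $v\mapsto \frac{v}{1+v}$ is increasing and sends $\frac{b}{1-b}$ to $b$, this dictionary is what converts the two exponents into one another.

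For the direction $\Rightarrow$, I assume $\limsup_{n}\frac{r_n(x,\beta)}{n}=b$ and establish the two inequalities $v_\beta(x)\ge \frac{b}{1-b}$ and $v_\beta(x)\le \frac{b}{1-b}$ separately. For the lower bound I pick a sequence $n_k$ along which $\frac{r_{n_k}}{n_k}\to b$ and let the optimal block of length $r_{n_k}$ begin at position $i_k+1$, so that $i_k\le n_k-r_{n_k}$ and $T_\beta^{i_k}x<\beta^{-r_{n_k}}=\beta^{-i_k\cdot (r_{n_k}/i_k)}$ by the dictionary; since $r_{n_k}/i_k\ge r_{n_k}/(n_k-r_{n_k})\to \frac{b}{1-b}$, this produces solutions of $T_\beta^i x\le \beta^{-iv}$ for every $v<\frac{b}{1-b}$ and infinitely many $i$, whence $v_\beta(x)\ge \frac{b}{1-b}$. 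For the upper bound I suppose $v_\beta(x)>\frac{b}{1-b}$; then some $v>\frac{b}{1-b}$ satisfies $T_\beta^n x\le \beta^{-nv}$ for infinitely many $n$, each such $n$ forcing a run of at least $\lfloor nv\rfloor$ zeros beginning at position $n+1$, so that $r_{n+\lfloor nv\rfloor}\ge \lfloor nv\rfloor$ and therefore $\limsup_{n}\frac{r_n}{n}\ge \frac{v}{1+v}>b$, a contradiction.

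The converse $\Leftarrow$ runs along the same two estimates, now starting from $v_\beta(x)=\frac{b}{1-b}$: taking $v=\frac{b}{1-b}-\delta$ yields infinitely many long zero blocks and hence $\limsup_{n}\frac{r_n}{n}\ge \frac{v}{1+v}$, and letting $\delta\to 0$ gives $\limsup\ge b$; while assuming $\limsup_{n}\frac{r_n}{n}>b$ would, via the same block-and-position analysis as above, force $v_\beta(x)\ge \frac{b'}{1-b'}>\frac{b}{1-b}$ for some $b'>b$, contradicting the hypothesis. Combining the two bounds gives $\limsup_{n}\frac{r_n}{n}=b$.

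The step I expect to be the main obstacle is ensuring that the starting positions $i_k$ of the optimal blocks tend to infinity, which is needed for $T_\beta^{i_k}x<\beta^{-r_{n_k}}$ to furnish genuinely infinitely many solutions of the defining inequality for $v_\beta(x)$. If $i_k$ stayed bounded, some fixed $i_0$ would satisfy $T_\beta^{i_0}x<\beta^{-r_{n_k}}$ with $r_{n_k}\to\infty$, forcing $T_\beta^{i_0}x=0$, i.e.\ a finite $\beta$-expansion and hence $\frac{r_n}{n}\to 1$; this is excluded precisely because $b<1$ (equivalently, a finite value of $v_\beta(x)$ rules out finite expansions). The remaining care is bookkeeping around the floor functions and the distinction between strict and non-strict inequalities, together with the minor point that a block witnessed by $T_\beta^n x\le\beta^{-nv}$ may extend past position $n+\lfloor nv\rfloor$, none of which affects the limiting ratios.
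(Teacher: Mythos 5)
Your proof is correct and is essentially the argument the paper intends: the paper simply says Lemma \ref{ship1} follows ``by the same arguments as the proof of Lemma \ref{ship}'', and your proposal is precisely that adaptation, built on the same dictionary between a run of $\ell$ zeros starting after position $i$ and the inequality $T_\beta^i x<\beta^{-\ell}$. Your explicit handling of the case where the starting positions $i_k$ of the optimal blocks stay bounded (forcing $T_\beta^{i_0}x=0$ and hence $r_n/n\to 1$, excluded by $b<1$) is a detail the paper glosses over, and it is resolved correctly.
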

\begin{proof} It can be deduced by the same arguments as the proof of Lemma \ref{ship}.
\end{proof}

Now we can give part of the proof of Theorem \ref{ab}.

We will first show when $a>\frac{b}{1+b},\ 0< b\leq1$, $E_{a,b}=\emptyset$.  In fact, if $\limsup\limits_{n\rightarrow \infty}\frac{r_n(x,\beta)}{n}=b$, then for all $\delta>0$, there exits a sequence $\{n_k\}_{k=1}^\infty$ such that $r_{n_k}(x,\beta)\leq \lfloor (b+\delta)n_k\rfloor$. Thus, when we consider the prefix at the position $n_k+\lfloor bn_k\rfloor$, there are at most $\lfloor(b+\delta)n_k\rfloor$ consecutive $0$'s. Thus  $r_{n_k+\lfloor bn_k\rfloor}(x,\beta)\leq \lfloor (b+\delta)n_k\rfloor$. Hence, $$a=\liminf_{n\rightarrow\infty}\frac{r_n(x,\beta)}{n}\leq \lim_{k\rightarrow \infty} \frac{\lfloor r_{n_k+\lfloor bn_k\rfloor}(x,\beta)\rfloor}{n_k+\lfloor bn_k\rfloor}\leq \lim_{k\rightarrow \infty} \frac{(b+\delta)n_k}{n_k+\lfloor bn_k\rfloor}=\frac{b+\delta}{1+b}.$$ Letting $\delta\rightarrow 0$, we have
\begin{equation}\label{le}
a\leq\frac{b}{1+b}
\end{equation} Therefore, $E_{a,b}$ is empty when $a>\frac{b}{1+b},\ 0< b\leq1$.

When $0<a\leq \frac{b}{1+b},\ 0< b<1$, Lemmas \ref{ship} and \ref{ship1} give the fact that the sets we consider here are essentially the same as the sets studied in Bugeaud and Liao \cite{BL}, that is $$E_{a,b}=\left\{x\in[0,1]:\hat{v}_{\beta}(x)=\frac{a}{1-a},\ v_{\beta}(x)=\frac{b}{1-b}\right\}=U_{\beta}\left(\frac{a}{1-a},\frac{b}{1-b}\right).$$ Consequently, we can apply Theorem \ref{bl} to obtain $$\dim_{\rm H}E_{a,b}=\dim_{\rm H}U_{\beta}\left(\frac{a}{1-a},\frac{b}{1-b}\right)=1-\frac{b^2(1-a)}{b-a}.$$

However, Theorem \ref{bl} cannot be applied for the cases $a=0,\ 0< b<1$ and $0<a\leq\frac{1}{2},\ b=1$. Remark that $E_{a,1}\subseteq F_1$ where $F_1$ is defined by (\ref{bb}) and $\dim_{\rm H}F_1=0$ by  \cite[Theorem 1.1]{LL}. So $\dim_{\rm H} E_{a,1}=0$ ($0<a\leq\frac{1}{2}$) and there is nothing to prove.  For the other case, we have $$E_{0,b}\subseteq \left\{x\in[0,1]:v_{\beta}(x)\geq \frac{b}{1-b}\right\}.$$ Then we can use Theorem \ref{sw} to obtain the upper bound of $\dim_{\rm H}E_{0,b}$ which is $1-b$ for all $0<b<1$. Hence it remains to give the lower bound of $\dim_{\rm H}E_{0,b}$ for all $0<b<1$.

\subsection{Lower bound of $\dim_{\rm H}E_{0,b}\ (0<b<1)$}
Now we give the lower bound of $\dim_{\rm H}E_{0,b}$ for the case $0<b<1$. In fact, we can also include the proof of the lower bound of $\dim_{\rm H}E_{a,b}$ for the case $0<a\leq\frac{b}{1+b},\ 0<b<1,$ though the later case has already been given in the end of Section 3.1.

Let $\beta>1$.  Recall that the infinite $\beta$-expansion of 1 is $\varepsilon^\ast(\beta)=(\varepsilon_1^\ast(\beta), \varepsilon_2^\ast(\beta), \ldots)$. We will apply the approximation of $\beta$ to construct the Cantor subset as follows. For all $N$ with $\varepsilon_N^\ast>0$, let $\beta_N$ be the unique solution of the equation: $$1=\frac{\varepsilon_1^\ast(\beta)}{x}+\cdots+\frac{\varepsilon_N^\ast(\beta)}{x^N}.$$ Then $$\varepsilon^\ast(\beta_N)=(\varepsilon_1^\ast(\beta),\ldots,\varepsilon_{N}^\ast(\beta)-1)^\infty.$$
Hence $0<\beta_N<\beta$ and $\beta_N$ is increasing to $\beta$ as $N$ goes to infinity. The number $\beta_N$ is called \emph{an approximation of $\beta$}.  Moreover, by Theorem \ref{P}(3), $\Sigma_{\beta_N}^n\subseteq\Sigma_{\beta}^n$ for all $n\geq 1$ and $\Sigma_{\beta_N}\subseteq\Sigma_{\beta}$. We therefore have the following facts.
\begin{proposition}[Shen and Wang \cite{SW}]\label{re2}
(1) For all $\omega\in \Sigma_{\beta_N}^n$ with $n\geq N$, the cylinder $I_n(\omega,\beta)$ is full when considering $\omega$ as an element of $\Sigma_{\beta}^n$. Consequently, $\omega$ can concatenate with all $\beta$-admissible words.

(2) For every $\omega\in\Sigma_{\beta_N}^n$, when regarding $\omega$ as an element of $\Sigma_{\beta}^n$, we have
\begin{equation}\label{in}
\beta^{-(n+N)}\leq |I_n(\omega,\beta)|\leq \beta^{-n}.
\end{equation}
\end{proposition}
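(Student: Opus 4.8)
The plan is to deduce both assertions from the combinatorial description of full cylinders in Theorem~\ref{AB} together with Parry's admissibility criterion (Theorem~\ref{P}(1)), using the one feature that separates $\varepsilon^\ast(\beta_N)$ from $\varepsilon^\ast(\beta)$: the two sequences agree on their first $N-1$ symbols, while $\varepsilon_N^\ast(\beta_N)=\varepsilon_N^\ast(\beta)-1$ and $\varepsilon_N^\ast(\beta)>0$. As a warm-up that isolates the mechanism, note that $(\varepsilon_1^\ast(\beta),\ldots,\varepsilon_N^\ast(\beta))$ is a prefix of the admissible sequence $\varepsilon^\ast(\beta)$ with positive last digit, so Theorem~\ref{AB}(2) shows that the first period $w:=(\varepsilon_1^\ast(\beta),\ldots,\varepsilon_{N-1}^\ast(\beta),\varepsilon_N^\ast(\beta)-1)$ of $\varepsilon^\ast(\beta_N)$ is already a full word for $\beta$; it is exactly this decrement at position $N$ that will create the strict lexicographic gaps below.

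For part (1) I would invoke Theorem~\ref{AB}(1): it suffices to prove that $\omega\ast\omega'$ is $\beta$-admissible for every $\beta$-admissible $\omega'$, which by Theorem~\ref{P}(1) amounts to checking $\sigma^j(\omega\ast\omega')\leq_{\rm{lex}}\varepsilon^\ast(\beta)$ at every shift $j$. Shifts with $j\geq n$ begin inside $\omega'$ and are covered by the admissibility of $\omega'$. For $0\leq j<n$ set $\ell=n-j$ and $u=\sigma^j\omega$; $\beta_N$-admissibility gives $u\leq_{\rm{lex}}(\varepsilon_1^\ast(\beta_N),\ldots,\varepsilon_\ell^\ast(\beta_N))$. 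When $\ell\geq N$ the right-hand prefix is strictly smaller than $(\varepsilon_1^\ast(\beta),\ldots,\varepsilon_\ell^\ast(\beta))$ at position $N$, so $u<_{\rm{lex}}(\varepsilon_1^\ast(\beta),\ldots,\varepsilon_\ell^\ast(\beta))$ and $u$ may be followed by anything. The delicate case — and the step I expect to be the main obstacle — is that of the short suffixes $\ell<N$, where $\varepsilon^\ast(\beta_N)$ and $\varepsilon^\ast(\beta)$ still coincide and only the non-strict bound $u\leq_{\rm{lex}}(\varepsilon_1^\ast(\beta),\ldots,\varepsilon_\ell^\ast(\beta))$ is available; here one must exploit $n\geq N$ to locate the decremented symbol inside the terminal block $(\omega_{n-N+1},\ldots,\omega_n)$ and rule out that an equality $u=(\varepsilon_1^\ast(\beta),\ldots,\varepsilon_\ell^\ast(\beta))$ is prolonged into a lexicographic violation by the continuation.

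For part (2) the upper bound $|I_n(\omega,\beta)|\leq\beta^{-n}$ is the general estimate already recorded. For the lower bound I would show that $\omega\ast 0^{N}$ is a full word for $\beta$; then $I_{n+N}(\omega\ast0^N,\beta)\subseteq I_n(\omega,\beta)$ is full of length $\beta^{-(n+N)}$, which yields $|I_n(\omega,\beta)|\geq\beta^{-(n+N)}$. To verify fullness of $\omega\ast0^N$ I would again run through the suffixes $u=\sigma^j\omega$ with $0\leq j<n$: the case $u<_{\rm{lex}}(\varepsilon_1^\ast(\beta),\ldots,\varepsilon_\ell^\ast(\beta))$ is immediate, and in the remaining case $u=(\varepsilon_1^\ast(\beta),\ldots,\varepsilon_\ell^\ast(\beta))$ one necessarily has $\ell<N$ (otherwise $\beta_N$-admissibility would be violated at position $N$). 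Then, because $\varepsilon_N^\ast(\beta)>0$, the shifted sequence $\sigma^\ell\varepsilon^\ast(\beta)$ has its first nonzero entry at a position $\leq N-\ell<N$, so the inserted block $0^N$ forces $(u,0^N,\varepsilon^\ast(\beta))<_{\rm{lex}}\varepsilon^\ast(\beta)$ and hence fullness. This is exactly the point at which the appended zeros accomplish what an arbitrary continuation cannot, which makes the length estimate in part (2) the clean half of the argument.
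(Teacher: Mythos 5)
The paper does not actually prove this proposition; it imports it from \cite{SW}, so there is no internal argument to compare against and your reconstruction has to be judged on its own terms. Your part (2) is correct and is the standard argument: for a suffix $u=\sigma^j\omega$ of length $\ell\geq N$, $\beta_N$-admissibility forces $u$ strictly below $(\varepsilon_1^\ast(\beta),\ldots,\varepsilon_\ell^\ast(\beta))$ at some position $\leq N$ (because of the decrement at position $N$), while for $\ell<N$ the only dangerous case is the equality $u=(\varepsilon_1^\ast(\beta),\ldots,\varepsilon_\ell^\ast(\beta))$, which the appended block $0^N$ destroys precisely because $\varepsilon_N^\ast(\beta)>0$ occurs among the next $N$ coordinates of $\varepsilon^\ast(\beta)$. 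This gives a full cylinder of order $n+N$ inside $I_n(\omega,\beta)$ and hence the lower bound $\beta^{-(n+N)}$; the upper bound is generic.

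The case you flagged in part (1) as ``the main obstacle'' is not a technicality you failed to push through: it is a genuine obstruction, and part (1) as literally stated is false. Take $\beta$ the golden mean, so $\varepsilon^\ast(\beta)=(1,0)^\infty$, and $N=3$, so $\varepsilon^\ast(\beta_3)=(1,0,0)^\infty$. The word $\omega=(1,0,0,1)$ belongs to $\Sigma_{\beta_3}^4$ and $4\geq N$, yet $\omega\ast(1)=(1,0,0,1,1)$ is not $\beta$-admissible, since $\sigma^3(1,0,0,1,1)=(1,1)>_{\rm lex}(1,0)=(\varepsilon_1^\ast(\beta),\varepsilon_2^\ast(\beta))$; by Theorem~\ref{AB}(1) the cylinder $I_4(\omega,\beta)$ is not full. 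There is no way to ``locate the decremented symbol inside the terminal block,'' because nothing in the hypothesis prevents $\omega$ from \emph{ending} with a prefix of $\varepsilon^\ast(\beta)$ of length $<N$; the decrement constrains suffixes of length $\geq N$ only. The usable statement --- the one Shen and Wang actually prove, and the only one the subsequent length estimate and the paper's constructions genuinely need --- is exactly what your part (2) establishes: $(\omega,0^N)$ is full, so $\omega$ concatenates with every admissible word \emph{after} inserting $N$ zeros, and $|I_n(\omega,\beta)|\geq\beta^{-(n+N)}$. Correspondingly, where the paper invokes part (1) to declare the words of $\M_d$ full, the blocks should be read as carrying a terminal $0^N$; your instinct to make the appended zeros do all the work was the right one.
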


For all $k\geq 1$ and $N>1$ with $\varepsilon_N^\ast(\beta)>0$, choose two sequences $\{n_k\}_{k=1}^\infty$ and  $\{m_k\}_{k=1}^\infty$ which satisfy  $n_k<m_k<n_{k+1}$ with $n_1>2N$, and $m_k-n_k> m_{k-1}-n_{k-1}$ with $m_1-n_1> 2N$. Moreover, $\{n_k\}_{k=1}^\infty$ and  $\{m_k\}_{k=1}^\infty$ can be chosen to satisfy
\begin{equation}\label{inf2}
\lim_{k\rightarrow \infty}\frac{m_k-n_k}{n_{k+1}+m_k-n_k}=a
\end{equation}
and
\begin{equation}\label{sup2}
\lim_{k\rightarrow \infty}\frac{m_k-n_k}{m_k}=b.
\end{equation}
In fact, such sequences do exist by the following arguments.


(1) If $0<a\leq \frac{b}{1+b},\ 0< b<1$, let
$$n'_k=\left\lfloor\left(\frac{b(1-a)}{a(1-b)}\right)^k\right\rfloor\quad {\rm and}\quad  m'_k=\left\lfloor\frac{1}{1-b}\left(\frac{b(1-a)}{a(1-b)}\right)^k\right\rfloor.$$ Note that $a<b$, so $\frac{b(1-a)}{a(1-b)}>1$. Then both sequences $\{n'_k\}_{k=1}^\infty$ and $\{m'_k\}_{k=1}^\infty$ are increasing to infinity as $k$ tends to infinity. A small adjustment can attain the required sequences.

(2) If $a=0,\ 0< b<1$, let $$n'_k= k^k\quad {\rm and}\quad  m'_k=\left\lfloor\frac{1}{1-b}k^k\right\rfloor.$$ We can adjust these sequences to make sure that $m_k-n_k> m_{k-1}-n_{k-1}$ with $m_1-n_1> 2N$.


Now let us construct a Cantor subset of $E_{a,b}$.

For all $d>2N$, let
\begin{equation}\label{md}
\M_d=\{\omega=(1,0^{N-1},\omega_1,\ldots,\omega_{d-N}): (\omega_1,\ldots,\omega_{d-N}) \in \Sigma_{\beta_N}^{d-N}\}.
\end{equation}Remark that $(1,0^{N-1},\omega_1,\ldots,\omega_{d-N})\in \Sigma_{\beta_N}^d$. Thus, by Proposition \ref{re2}(1), every word belonging to $\M_d$ is full when regarding it as an element of $\Sigma_\beta^d$. Now let
$G_1=\{\omega:\omega\in \M_{n_1}\}$. Next, for all $k\geq 1$, let $n_{k+1}=(m_k-n_k)t_k+m_k+p_k$ where $0\leq p_k<m_k-n_k$. Define
$$G_{k+1}=\{u_{k+1}=(1,0^{m_k-n_k-1},u_k^{(1)},\ldots,u_k^{(t_k)},u_k^{(t_k+1)}):\ u_k^{(i)}\in \M_{m_k-n_k}{\rm \ for\ all\ } 1\leq i\leq t_k \}$$ where
$$u_k^{(t_k+1)}=\left\{
\begin{aligned}
0^{p_k} & , &{\rm when}\ \ p_k \leq 2N; \\
\omega \in \M_{p_k} & , &{\rm when}\ \  p_k > 2N.\\
\end{aligned}
\right.$$

It follows from Propositions \ref{re2}(1) and \ref{re1}(2) that every $u_k\in G_k$ is full. Hence, we can define the set $D_k$ as:
\begin{equation}\label{D}
D_k= \left\{(u_1,\ldots,u_k):u_i\in G_i, \ {\rm for\ all}\ 1\leq i\leq k\right\}.
\end{equation}

Notice that the length of $u_k\in G_k$ satisfies $|u_k|=n_k-n_{k-1}$. For each $u=(u_1,\ldots,u_k)\in D_k,$  we have $$|u|=|u_1|+|u_2|+\cdots+|u_k|=n_1+(n_2-n_1)+\cdots+(n_k-n_{k-1})=n_k.$$ Define $$E_N=\bigcap_{k=1}^{\infty}\bigcup_{u \in D_k}I_{n_k}(u).$$

The following lemma shows that $E_N$ is a subset of $E_{a,b}$.
\begin{lemma}\label{sub}
We have $E_N\subseteq  E_{a,b}$ for every $0\leq a\leq \frac{b}{1+b}$ and $0<b< 1$.
\end{lemma}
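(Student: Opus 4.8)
The plan is to verify the two defining conditions of $E_{a,b}$ — namely $\liminf_{n\to\infty}\frac{r_n(x,\beta)}{n}=a$ and $\limsup_{n\to\infty}\frac{r_n(x,\beta)}{n}=b$ — for every $x\in E_N$, by tracking precisely where the long runs of zeros occur in the $\beta$-expansion of such $x$. By construction, every $x\in E_N$ has a $\beta$-expansion whose prefix of length $n_k$ is a concatenation $(u_1,\ldots,u_k)$ with $u_i\in G_i$, and the crucial structural feature is that each block $u_{k+1}$ begins with exactly $m_k-n_k-1$ forced zeros (from the leading $1,0^{m_k-n_k-1}$), while the intermediate pieces $u_k^{(i)}\in\M_{m_k-n_k}$ each begin with $1,0^{N-1}$ and otherwise carry $\beta_N$-admissible digits. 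The first step is therefore to identify the two competing sources of runs of zeros: the genuinely long runs of length $m_k-n_k$ that appear at the start of each block $u_{k+1}$ (giving the $\limsup$), and the requirement that no run substantially longer than this is accidentally created by concatenation (controlling the $\liminf$).

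For the \textbf{$\limsup$ computation}, I would argue that the longest run of zeros appearing in the prefix of length $m_k$ is of length roughly $m_k-n_k$. The run of $m_k-n_k-1$ zeros at the head of $u_{k+1}$, together with the constraint that digits coming from $\M_{m_k-n_k}$ blocks contain at most $N-1$ consecutive zeros from their internal structure (apart from the leading $0^{N-1}$), should force $r_{m_k}(x,\beta)\approx m_k-n_k$. Dividing by $m_k$ and invoking the normalization \eqref{sup2}, this yields $\limsup_n \frac{r_n(x,\beta)}{n}\geq b$; the matching upper bound comes from checking that between positions $m_k$ and $m_{k+1}$ no longer run can be assembled, so that the ratio $\frac{r_n(x,\beta)}{n}$ is maximized precisely at the scales $n=m_k$ and decays elsewhere. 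The key quantitative input is that the blocks are chosen with $m_k-n_k$ strictly increasing, so each new block introduces a strictly longer run than all previous ones, ruling out spurious maxima at earlier positions.

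For the \textbf{$\liminf$ computation}, the idea is that the shortest relative run occurs at the scales $n=n_{k+1}$, just before a new long zero-block begins. At position $n_{k+1}$ the expansion has just finished the $t_k$ repetitions of $\M_{m_k-n_k}$ blocks (plus the tail $u_k^{(t_k+1)}$), and the longest run of zeros available up to that point is still only $m_k-n_k$, while the denominator has grown to $n_{k+1}=(m_k-n_k)t_k+m_k+p_k$. The quantity $\frac{m_k-n_k}{n_{k+1}+m_k-n_k}$ in \eqref{inf2} is engineered to be exactly this ratio (the extra $m_k-n_k$ in the denominator accounts for looking slightly past $n_{k+1}$ into the start of the next long run), so \eqref{inf2} gives $\liminf_n \frac{r_n(x,\beta)}{n}\leq a$. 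For the reverse inequality I would check that at every intermediate scale the run length is at least as large as $m_k-n_k$ relative to $n$, so that the infimum of the ratio is genuinely attained near $n_{k+1}$ and equals $a$ in the limit.

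The \textbf{main obstacle} I anticipate is the careful bookkeeping of the run-length function at the transitional scales, where one must confirm that concatenating full words from $\M_{m_k-n_k}$ does not create an unexpectedly long run straddling two adjacent blocks, and that the leading $1$ in each $\M_d$-word (and in each $u_{k+1}$) genuinely interrupts the zero runs so that runs cannot accumulate across block boundaries. This is precisely the role of forcing each block to begin with a nonzero digit $1$: it guarantees that a run of zeros is confined to a single structural segment. Once this no-accumulation property is established, the two limit computations reduce to the elementary evaluation of the ratios in \eqref{inf2} and \eqref{sup2}, and the lemma follows. A secondary technical point, handled by Proposition \ref{re2}, is that all the words involved remain admissible (indeed full) in $\Sigma_\beta$, so that $E_N$ is genuinely nonempty and its cylinders behave as expected.
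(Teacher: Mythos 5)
Your proposal follows essentially the same route as the paper: a scale-by-scale case analysis showing that zero-runs cannot accumulate across block boundaries (each block and each $\M_d$-word begins with the digit $1$), that the ratio $\frac{r_n(x,\beta)}{n}$ is maximized at $n=m_k$ and minimized just after the previous long zero-block has been absorbed (the paper uses $n=n_k+m_{k-1}-n_{k-1}$, which is your $n_{k+1}+(m_k-n_k)$ after an index shift), and then an appeal to the normalizations (\ref{inf2}) and (\ref{sup2}). One factual misstatement: a word of $\M_d$ need not have internal zero-runs bounded by $N-1$, since the $\beta_N$-admissible tail $(\omega_1,\ldots,\omega_{d-N})$ may itself be $0^{d-N}$, so the true internal bound is $d-1=m_k-n_k-1$ (plus an $O(N)$ spillover into a terminal $0^{p_k}$); this is still dominated by the intended long run, so your limsup computation survives, but the justification should be stated with this weaker bound, as the paper does via its $m_k-n_k+2N$ upper estimates.
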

\begin{proof} For every integer $n\geq1$, there exists a $k\geq 1$ such that $n_k< n\leq n_{k+1}$. We distinguish three cases.

(1) If $n_k< n\leq n_k+m_{k-1}-n_{k-1}+2N$, we have $m_{k-1}-n_{k-1}-1\leq r_n(x,\beta)\leq m_{k-1}-n_{k-1}+2N$ by the construction of $E_N$. It follows  that$$\frac{m_{k-1}-n_{k-1}-1}{n_k+m_{k-1}-n_{k-1}+2N}\leq \frac{r_n(x,\beta)}{n}\leq \frac{m_{k-1}-n_{k-1}+2N}{n_k}.$$

(2) If $n_k+m_{k-1}-n_{k-1}+2N< n\leq m_k$, the construction of $E_N$ gives $r_n(x,\beta)= n-n_k$. By (\ref{f}), we have $$\frac{m_{k-1}-n_{k-1}+2N}{n_k+m_{k-1}-n_{k-1}+2N}\leq \frac{r_n(x,\beta)}{n}\leq \frac{m_k-n_k}{m_k}.$$

(3) If $m_k\leq n\leq n_{k+1}$, we deduce from the construction of $E_N$ that $m_k-n_k-1\leq r_n(x,\beta)\leq m_k-n_k+2N$. Consequently,$$\frac{m_k-n_k-1}{n_{k+1}}\leq \frac{r_n(x,\beta)}{n}\leq \frac{m_k-n_k+2N}{m_k}.$$

Combining the above three cases, by (\ref{inf2}) and (\ref{sup2}), we have $$\liminf_{n\rightarrow\infty}\frac{r_n(x,\beta)}{n}\geq a\quad {\rm and}\quad \limsup_{n\rightarrow\infty}\frac{r_n(x,\beta)}{n}\leq b.$$

Now we complete our proof by finding the subsequences such that the limit inferior and limit superior are reached. In fact, by (\ref{inf2}), we get $$\lim_{k\rightarrow \infty}\frac{r_{n_k+m_{k-1}-n_{k-1}}}{n_k+m_{k-1}-n_{k-1}}\leq \lim_{k\rightarrow \infty}\frac{m_{k-1}-n_{k-1}+2N}{n_k+m_{k-1}-n_{k-1}}=a.$$ It follows from (\ref{sup2}) that $$\lim_{k\rightarrow \infty}\frac{r_{m_k}}{m_k}= \lim_{k\rightarrow \infty}\frac{m_k-n_k-1}{m_k}=b.$$
\end{proof}

Now we estimate the cardinality of the set $D_k$ defined by (\ref{D}).  Write $q_k:=\sharp D_k.$
\begin{lemma}\label{c}
Let $\beta>1$. Let $\beta_N$ be an approximation of $\beta$. For every $\overline{\beta}<\beta_N$, there exist an integer $k(\overline{\beta}, \beta_N)$ and real numbers $c(\overline{\beta}, \beta_N),\ c'(\overline{\beta}, \beta_N)$  such that, for all $k\geq k(\overline{\beta}, \beta_N)$, we have
\begin{equation}\label{qk}
q_k\geq c'(\overline{\beta}, \beta_N)c(\overline{\beta}, \beta_N)^k\overline{\beta}^{\sum\limits_{i=1}^{k-1}(n_{i+1}-m_i)}.
\end{equation}
\end{lemma}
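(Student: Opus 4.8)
The plan is to exploit the product structure of $D_k$. Since every word in each $G_i$ is full (Propositions \ref{re2}(1) and \ref{re1}(2)) and all of the constituent blocks have prescribed lengths, distinct tuples $(u_1,\dots,u_k)$ produce distinct admissible words of length $n_k$, which can be uniquely decomposed back into their blocks; hence $q_k=\sharp D_k=\prod_{i=1}^{k}\sharp G_i$, and it suffices to bound each factor $\sharp G_{i+1}$ from below. First I would count $\sharp G_{i+1}$ exactly: by (\ref{D}) a word in $G_{i+1}$ is obtained by freely choosing the $t_i$ middle blocks $u_i^{(1)},\dots,u_i^{(t_i)}\in\M_{m_i-n_i}$ together with the tail block $u_i^{(t_i+1)}$, so $\sharp G_{i+1}=(\sharp\M_{m_i-n_i})^{t_i}\cdot s_i$, where $s_i=1$ if $p_i\le 2N$ and $s_i=\sharp\M_{p_i}$ if $p_i>2N$.

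Next I would convert these counts into powers of $\beta_N$. By the definition (\ref{md}) we have $\sharp\M_d=\sharp\Sigma_{\beta_N}^{d-N}$, so R\'enyi's estimate (Theorem \ref{R}) gives $\sharp\M_d\ge\beta_N^{d-N}$. Substituting this and using the identity $n_{i+1}-m_i=(m_i-n_i)t_i+p_i$, both cases of the tail block collapse to the single bound
\begin{equation*}
\sharp G_{i+1}\ \ge\ \beta_N^{(n_{i+1}-m_i)-N t_i-2N}.
\end{equation*}
The decisive step is to absorb the error exponent $N t_i+2N$ into the gap between $\beta_N$ and the slightly smaller $\overline{\beta}$. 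Here I would use that $t_i\le (n_{i+1}-m_i)/(m_i-n_i)$, so that $N t_i$ is the fraction $N/(m_i-n_i)$ of the main exponent $n_{i+1}-m_i$; because the sequence $m_i-n_i$ increases to infinity, once $m_i-n_i\ge N\log\beta_N/\log(\beta_N/\overline{\beta})$ — which holds for all $i$ beyond some index $k(\overline{\beta},\beta_N)$ — a short logarithmic comparison yields $\beta_N^{(n_{i+1}-m_i)-N t_i}\ge\overline{\beta}^{\,n_{i+1}-m_i}$, whence $\sharp G_{i+1}\ge \beta_N^{-2N}\,\overline{\beta}^{\,n_{i+1}-m_i}$. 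Thus $c(\overline{\beta},\beta_N)=\beta_N^{-2N}$ serves as the per-step constant.

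Finally I would multiply over $i$. Splitting the product $\prod_{i=1}^{k-1}\sharp G_{i+1}$ at the threshold $k(\overline{\beta},\beta_N)$, the finitely many initial factors $\sharp G_1,\dots,\sharp G_{k(\overline{\beta},\beta_N)}$ and the finitely many exponents $n_{i+1}-m_i$ with $i<k(\overline{\beta},\beta_N)$ contribute only a fixed multiplicative constant, which together with the factor $c^{-k(\overline{\beta},\beta_N)}$ can be collected into $c'(\overline{\beta},\beta_N)$; the remaining factors produce $c^{\,k}\,\overline{\beta}^{\sum_{i=1}^{k-1}(n_{i+1}-m_i)}$, giving (\ref{qk}). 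The main obstacle is precisely the absorption step: the loss $N t_i$ grows with $t_i$, which itself may tend to infinity, so a crude constant bound fails and one genuinely needs both $\overline{\beta}<\beta_N$ and the divergence of $m_i-n_i$ in order to treat $N t_i$ as a vanishing proportion of $n_{i+1}-m_i$. A minor but essential point is to retain the tail-block count $\sharp\M_{p_i}$ when $p_i>2N$, since this is what allows the two cases to merge into the single clean estimate above.
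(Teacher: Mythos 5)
Your proposal is correct and follows essentially the same route as the paper: both count $\sharp G_{i+1}=(\sharp\M_{m_i-n_i})^{t_i}\cdot s_i$, invoke R\'enyi's bound to get $\sharp\M_d\geq\beta_N^{d-N}$, absorb the accumulated loss $Nt_i+O(N)$ into the gap between $\overline{\beta}$ and $\beta_N$ once $m_i-n_i$ exceeds the threshold $N\log\beta_N/\log(\beta_N/\overline{\beta})$, and collect the finitely many initial factors into $c'(\overline{\beta},\beta_N)$. The only difference is cosmetic: you merge the two tail-block cases into one unified exponent bound, whereas the paper treats $p_k\leq 2N$ and $p_k>2N$ separately via an auxiliary threshold $d'$.
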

\begin{proof}
Recall the definition of $\M_d$ as (\ref{md}). Theorem \ref{R} implies $$\sharp \M_d\geq \beta_N^{d-N}$$ for all $d\geq N$. Since $\overline{\beta}<\beta_N$, there exists an integer $d'$ which depends on $\overline{\beta}$ and $\beta_N$ such that, for every $d\geq d'$, we have  \begin{equation}\label{bet}
\beta_N^{d-N}\geq \overline{\beta}^d.
\end{equation}
Moreover, the fact that $m_k-n_k$ is increasing and tends to $+\infty$ as $k\rightarrow+\infty$ ensures that we can find a large enough integer $k(\overline{\beta}, \beta_N)$ satisfying that, for all $k\geq k(\overline{\beta}, \beta_N)$,
\begin{equation}\label{sh}
\sharp \M_{m_k-n_k}\geq \beta_N^{m_k-n_k-N}\geq \overline{\beta}^{m_k-n_k}.
\end{equation}
Then, when $p_k\leq 2N$, we have
$$\sharp G_{k+1}\geq (\sharp \M_{m_k-n_k})^{t_k}\geq  \overline{\beta}^{(m_k-n_k)t_k}\geq \frac{1}{\overline{\beta}^{2N}} \overline{\beta}^{n_{k+1}-m_k}.$$
When $p_k>2N$, we deduce that $$\sharp G_{k+1}\geq (\sharp \M_{m_k-n_k})^{t_k} \cdot \sharp  \M_{p_k} \geq  \overline{\beta}^{(m_k-n_k)t_k}\cdot \beta_N^{p_k-N}= \frac{1}{{\beta_N}^{d'}} \overline{\beta}^{(m_k-n_k)t_k}{\beta_N}^{p_k-N+d'}.$$
Note that $p_k-N+d'>d'$. By (\ref{bet}), we have
$$\sharp G_{k+1}\geq \frac{1}{{\beta_N}^{d'}} \overline{\beta}^{(m_k-n_k)t_k}{\overline{\beta}}^{p_k+d'}= \frac{\overline{\beta}^{d'}}{{\beta_N}^{d'}} \overline{\beta}^{n_{k+1}-m_k}.$$

Let $ c(\overline{\beta},\beta_N):= \min\{\frac{1}{\overline{\beta}^{2N}}, \frac{\overline{\beta}^{d'}}{{\beta_N}^{d'}} \}$. It follows that for all $k\geq k(\overline{\beta}, \beta_N)$, $$\sharp G_{k+1} \geq c(\overline{\beta}, \beta_N) \overline{\beta}^{n_{k+1}-m_k}.$$

Immediately, by the relationship between $D_k$ and $G_k$, for any $k\geq k(\overline{\beta}, \beta_N)$, it comes to the conclusion that
$$\begin{aligned}
q_k=\sharp D_k=\prod_{i=1}^{k}\sharp G_k\geq \prod_{i=k(\overline{\beta},\beta_N)}^k\sharp G_i &\geq c(\overline{\beta}, \beta_N)^{k-k(\overline{\beta},\beta_N)}\overline{\beta}^{\sum\limits_{i=k(\overline{\beta},\beta_N)}^{k-1}(n_{i+1}-m_i)}\\ &\geq c'(\overline{\beta}, \beta_N)c(\overline{\beta}, \beta_N)^k\overline{\beta}^{\sum\limits_{i=1}^{k-1}(n_{i+1}-m_i)},
\end{aligned}$$
where $$c'(\overline{\beta}, \beta_N)=\overline{\beta}^{-\sum\limits_{i=1}^{k(\overline{\beta},\beta_N)-1}(n_{i+1}-m_i)}.$$
\end{proof}

Now we divide into three parts to complete our proof of the lower bound of $\dim_{\rm H}E_{a,b}$ by using the modified mass distribution principle (Theorem \ref{mp}).

(1) Define a probability measure $\mu$ supported on $E_N$. Set $$\mu([0,1])=1\quad {\rm and}\quad \mu(I_{n_1}(u))=\frac{1}{\sharp G_1},\ {\rm for}\ u \in D_1.$$ For each $k\geq 1,$ and $u=(u_1,\ldots,u_{k+1})\in D_{k+1}$, let
\begin{equation}\label{mu}
\mu(I_{n_{k+1}}(u))=\frac{\mu(I_{n_k}(u_1,\ldots,u_k))}{\sharp G_{k+1}}.
\end{equation}  For any $u \notin D_k$ ($k\geq 1$), let $\mu(I_{n_k}(u))=0$. It is routine to check that $\mu$ is well defined on $E_N$ and it can be extended to a probability measure on $[0,1]$.

(2) Calculate the local dimension $\liminf\limits_{n\rightarrow \infty}\frac{\log \mu(I_n)}{\log|I_n|}$ for any $x\in E_N$. For convenience, we denote $I_n(x)$ by $I_n$ without ambiguity. Then we have
\begin{equation}\label{m1}
\mu(I_{n_i})=\frac{1}{q_i}\leq \frac{1}{c'(\beta_N, \overline{\beta})c(\beta_N, \overline{\beta})^i \overline{\beta}^{\sum\limits_{j=1}^{i-1}(n_{j+1}-m_j)}}
\end{equation}for every $i>k(\beta_N, \overline{\beta}),$ where $k(\beta_N, \overline{\beta})$ is an integer given in Lemma \ref{c}. For all $n\geq 1$, there is an integer $k\geq 1$ such that $n_k<n\leq n_{k+1}$. By the construction of $E_N$ and the definition of $\mu$, it is natural to estimate the lower bound of $\frac{\log \mu(I_n)}{\log|I_n|}$ by dividing into the following three cases.

\medskip

Case 1. $n_k<n\leq m_k$. It follows from (\ref{m1}) that $$\mu(I_n)= \mu(I_{n_k})\leq c'(\beta_N, \overline{\beta})^{-1}c(\beta_N, \overline{\beta})^{-k}\overline{\beta}^{-\sum\limits_{j=1}^{k-1}(n_{j+1}-m_j)}.$$ Furthermore, Theorem \ref{AB}(3) implies $$|I_n(x)|\geq |I_{m_k}(x)|=\frac{1}{\beta^{m_k}}.$$
As a consequence, $$\frac{\log \mu(I_n)}{\log|I_n|}\geq \frac{\sum\limits_{j=1}^{k-1}(n_{j+1}-m_j) \log \overline{\beta}+k\log c(\beta_N, \overline{\beta})+\log c'(\beta_N, \overline{\beta})}{m_k \log \beta}.$$

Case 2. $n=m_k+i(m_k-n_k)+\ell$ for some $0\leq i < t_k$ and $0\leq \ell< m_k-n_k$. In this case, when $0\leq \ell\leq N$, by (\ref{m1}) and  (\ref{sh}), we have
$$\begin{aligned}
\mu(I_n)= \mu(I_{m_k+i(m_k-n_k)+\ell})&\leq \mu(I_{m_k})\cdot\frac{1}{(\sharp \M_{m_k-n_k})^i}\\ &\leq c'(\beta_N, \overline{\beta})^{-1}c(\beta_N, \overline{\beta})^{-k}\overline{\beta}^{-\left(\sum\limits_{j=1}^{k-1}(n_{j+1}-m_j)+i(m_k-n_k)\right)}.
\end{aligned}$$
When $N< \ell< m_k-n_k$, we similarly see that
$$\begin{aligned}
\mu(I_n)= \mu(I_{m_k+i(m_k-n_k)+\ell})&\leq \mu(I_{m_k})\cdot\frac{1}{(\sharp \M_{m_k-n_k})^i}\cdot\frac{1}{\Sigma_{\beta_N}^{\ell-N}}\\
&\leq c'(\beta_N, \overline{\beta})^{-1}c(\beta_N, \overline{\beta})^{-k}\overline{\beta}^{-\left(\sum\limits_{j=1}^{k-1}(n_{j+1}-m_j)+i(m_k-n_k)\right)}{\beta_N}^{-\ell+N}.
\end{aligned}$$
Moreover, by (\ref{in}), it holds that  $$|I_n|\geq |I_{m_k+i(m_k-n_k)+\ell}|\geq \frac{1}{\beta^{m_k+i(m_k-n_k)+\ell+N}}.$$

Therefore,$$\frac{\log \mu(I_n)}{\log|I_n|}\geq \frac{\left(\sum\limits_{j=1}^{k-1}(n_{j+1}-m_j)+i(m_k-n_k)\right) \log \overline{\beta}+(\ell-N)\log\beta_N+k\log c(\beta_N, \overline{\beta})+\log c'(\beta_N, \overline{\beta})}{(m_k+i(m_k-n_k)+\ell+N) \log \beta}.$$

Case 3. $n=m_k+t_k(m_k-n_k)+\ell$ where $0\leq\ell\leq p_k$.  When $0\leq\ell\leq 2N$, we have
$$\begin{aligned}
\mu(I_n)= \mu(I_{m_k+t_k(m_k-n_k)})&= \mu(I_{m_k}(x))\cdot\frac{1}{(\sharp \M_{m_k-n_k})^{t_k}}\\&\leq c'(\beta_N, \overline{\beta})^{-1}c(\beta_N, \overline{\beta})^{-k}\overline{\beta}^{-\left(\sum\limits_{j=1}^{k-1}(n_{j+1}-m_j)+t_k(m_k-n_k)\right)}.\end{aligned}$$When $2N<\ell\leq p_k$, we get
$$\begin{aligned}
\mu(I_n)= \mu(I_{m_k+t_k(m_k-n_k)+\ell})&\leq \mu(I_{m_k}(x))\cdot\frac{1}{(\sharp\ M_{m_k-n_k})^{t_k}}\cdot\frac{1}{\Sigma_{\beta_N}^{\ell-N}}\\
&\leq c'(\beta_N, \overline{\beta})^{-1}c(\beta_N, \overline{\beta})^{-k}\overline{\beta}^{-\left(\sum\limits_{j=1}^{k-1}(n_{j+1}-m_j)+t_k(m_k-n_k)\right)}{\beta_N}^{-\ell+N}.
\end{aligned}$$
In addition, by (\ref{in}),  $$|I_n|\geq |I_{m_k+t_k(m_k-n_k)+\ell}|\geq\frac{1}{\beta^{m_k+t_k(m_k-n_k)+\ell+N}}.$$
Hence,  $$\frac{\log \mu(I_n)}{\log|I_n|}\geq \frac{\left(\sum\limits_{j=1}^{k-1}(n_{j+1}-m_j)+t_k(m_k-n_k)\right) \log \overline{\beta}+(\ell-N)\log\beta_N+k\log c(\beta_N, \overline{\beta})+\log c'(\beta_N, \overline{\beta})}{(m_k+t_k(m_k-n_k)+\ell+N) \log \beta}.$$

In all three cases, using (\ref{f}), we obtain$$\liminf_{n\rightarrow\infty}\frac{\log \mu(I_n)}{\log|I_n|}\geq \lim_{k\rightarrow\infty}\frac{\sum\limits_{j=1}^{k-1}(n_{j+1}-m_j)}{m_k}\frac{\log\overline{\beta}}{\log\beta}.$$ By (\ref{inf2}) and (\ref{sup2}), it immediately holds that $$\lim_{k\rightarrow\infty}\frac{n_k}{m_k}=1-b,\quad \lim_{k\rightarrow\infty}\frac{n_{k+1}}{m_k}=\frac{b(1-a)}{a}\quad {\rm and}\quad \lim_{k\rightarrow\infty}\frac{m_{k+1}}{m_k}=\frac{b(1-a)}{a(1-b)}.$$ By the Stolz-Ces\`{a}ro Theorem, we have$$
\lim_{k\rightarrow\infty}\frac{\sum\limits_{j=1}^{k-1}(n_{j+1}-m_j)}{m_k}=\lim_{k\rightarrow\infty}\frac{n_{k+1}-m_k}{m_{k+1}-m_k}
=\lim_{k\rightarrow\infty}\frac{\frac{n_{k+1}}{m_k}-1}{\frac{m_{k+1}}{m_k}-1}=1-\frac{b^2(1-a)}{b-a}.$$
As a consequence, $$\liminf_{n\rightarrow \infty}\frac{\log \mu(I_n)}{\log|I_n|}\geq \left(1-\frac{b^2(1-a)}{b-a}\right)\frac{\log\overline{\beta}}{\log\beta}.$$

(3) Use the modified mass distribution principle (Theorem \ref{mp}). We first let $\overline{\beta}\rightarrow\beta_N$, and then let $N\rightarrow\infty$. Applying Theorem \ref{mp}, we finish our proof.

\subsection{Proof of Corollary \ref{a}}
Note that when $\frac{1}{2}< a\leq 1$, the inequality (\ref{le}) implies $E_a=\emptyset$. We only need to consider the case $0\leq a\leq \frac{1}{2}$. By Lemma \ref{ship}, we have $$E_a=\left\{x\in[0,1]:\hat{v}_{\beta}(x)=\frac{a}{1-a}\right\}.$$  Thus, applying Theorem \ref{bl}, we have, for all $0<a\leq\frac{1}{2}$, $$\dim_{\rm H} E_a=\dim_{\rm H}\left\{x\in[0,1]:\hat{v}_{\beta}(x)=\frac{a}{1-a}\right\}=(1-2a)^2.$$

When $a=0$, by noting that $E_{0,0}\subseteq E_0$, we deduce that $E_0$ has full Lebesgue measure and thus has Huasdorff dimension $1$.
\section{Proof of Theorem \ref{res}}
The key point to prove Theorem \ref{res} is constructing a set $U$  with the following properties: (1) $U$ is a subset of  $E_{0,1}$; (2) $U$ is dense in the interval $[0,1]$; (3) $U$ is a $G_\d$ set, i.e.,\ a countable intersection of open sets.

Let $\beta>1$. Define $M=\min\{i>1:\varepsilon^\ast_i(\beta)>0\}.$ For all $k\geq 1$, let $\Gamma_k$ be defined by (\ref{g}). We choose two sequences $\{n_k\}_{k=1}^\infty$ and $\{m_k\}_{k=1}^\infty$ such that $n_k<m_k<n_{k+1}$ with $n_k>2k+\Gamma_k$ and $m_k-n_k>\max\{2(m_{k-1}-n_{k-1}),n_k-k,M\}$. In addition, $\{n_k\}_{k=1}^\infty$ and $\{m_k\}_{k=1}^\infty$ satisfy $$\lim_{k\rightarrow \infty}\frac{m_k-n_k}{n_{k+1}+m_k-n_k}=0,$$and$$\lim_{k\rightarrow \infty}\frac{m_k-n_k}{m_k}=1.$$
In fact, let $$n'_k= (2k+\Gamma_k)^{2k}\quad {\rm and}\quad  m'_k=(2k+2+\Gamma_{k+1})^{2k+1}.$$ Then by small adjustments, we can obtain the required sequences.

For all $k\geq 1$, write $n_{k+1}=(m_k-n_k)t_k+n_k+p_k$ where $0\leq p_k <m_k-n_k$.
Now we define $$U:=\bigcap_{n=1}^{\infty} \bigcup_{k=n}^{\infty} \bigcup_{(\epsilon_1,\ldots,\epsilon_k) \in \Sigma_{\beta}^k}{\rm int} \left(I_{n_{k+1}}(\epsilon_1,\ldots,\epsilon_k ,0^{n_k-k},(1,0^{m_k-n_k-1})^{t_k},0^{p_k})\right),$$ where ${\rm int}(I_{|\epsilon|}(\epsilon))$ stands for the interior of $I_{|\epsilon|}(\epsilon)$ for all $\epsilon \in \Sigma^\ast_\beta$.

\begin{remark}
For all $(\epsilon_1,\ldots,\epsilon_k)\in \Sigma_{\beta}^k$, it follows from Proposition \ref{re1}(3) that $(\epsilon_1,\ldots,\epsilon_k,0^{n_k-k})$ is full since $n_k>2k+\Gamma_k$. Note that $m_k-n_k\geq M$. Then the word $(1,0^{m_k-n_k-1})$ is full. By Proposition \ref{re1}(2), the word $0^{p_k}$ is full. Thus $U$ is well defined.
\end{remark}

The set ${\rm int}(I_{|\epsilon|}(\epsilon))$ is open which implies that $U$ is a $G_{\d}$ set. Consequently, it suffices to show that $U$ is a subset of $E_{0,1}$ and is dense in $[0,1]$.
\begin{lemma}\label{re}
The set $U$ is a subset of $E_{0,1}$.
\end{lemma}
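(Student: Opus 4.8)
The plan is to show that every point $x \in U$ satisfies both $\liminf_{n\to\infty} \frac{r_n(x,\beta)}{n} = 0$ and $\limsup_{n\to\infty} \frac{r_n(x,\beta)}{n} = 1$, so that $U \subseteq E_{0,1}$. Fix $x \in U$. By definition of $U$ as a countable intersection, for each $n$ there is some $k \ge n$ such that the $\beta$-expansion of $x$ begins with the prescribed block $(\epsilon_1,\ldots,\epsilon_k,0^{n_k-k},(1,0^{m_k-n_k-1})^{t_k},0^{p_k})$ of length $n_{k+1}$, for an appropriate admissible prefix $(\epsilon_1,\ldots,\epsilon_k)$. This forces $x$ to visit these blocks infinitely often, so there is an infinite increasing sequence of indices $k$ for which the digits of $x$ in positions $1,\ldots,n_{k+1}$ have exactly this structure.

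First I would establish $\limsup = 1$. Along the block structure just described, between position $n_k$ and position $m_k$ the digits are a single run of consecutive zeros of length $m_k - n_k - 1$ (coming from the first copy of $(1,0^{m_k-n_k-1})$ preceded by the padding $0^{n_k-k}$, which only lengthens the run). Hence $r_{m_k}(x,\beta) \ge m_k - n_k - 1$, giving
\begin{equation*}
\limsup_{n\to\infty}\frac{r_n(x,\beta)}{n} \ge \lim_{k\to\infty}\frac{m_k-n_k-1}{m_k} = 1,
\end{equation*}
by the defining relation $\lim_{k\to\infty}\frac{m_k-n_k}{m_k}=1$. Since $r_n \le n$ always, the limsup equals $1$. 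This direction is routine once the block with a long zero run is located.

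For $\liminf = 0$ I would examine positions just after a fully completed group of blocks, namely $n = n_{k+1}$. By construction the digit at position $n_{k+1}$ (or just before, depending on $p_k$) terminates the sequence of $(1,0^{m_k-n_k-1})$ patterns, and crucially the maximal zero run among the first $n_{k+1}$ digits is controlled: the longest run is at most $m_k-n_k-1 + (\text{bounded padding})$, because every maximal run of zeros inside the periodic part $(1,0^{m_k-n_k-1})^{t_k}$ has length $m_k-n_k-1$, separated by the digit $1$. The key point is that the relevant ratio at these indices is comparable to $\frac{m_k-n_k}{n_{k+1}+m_k-n_k}$, which tends to $0$ by the defining relation $\lim_{k\to\infty}\frac{m_k-n_k}{n_{k+1}+m_k-n_k}=0$; one must check that the arbitrary admissible prefix $(\epsilon_1,\ldots,\epsilon_k)$ can contribute a run of length at most $k + \Gamma_k$ (the padding $0^{n_k-k}$ may merge with trailing zeros of the prefix), and since $n_k > 2k+\Gamma_k$ this contribution is negligible relative to $n_{k+1}$. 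Putting these together yields $\liminf_{n\to\infty}\frac{r_n(x,\beta)}{n}=0$.

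The main obstacle will be the $\liminf$ direction, specifically verifying carefully that no subsequence of the ratio $\frac{r_n(x,\beta)}{n}$ stays bounded away from $0$. The delicate case is the behavior within the long-zero-run block (positions $n_k$ to $m_k$), where $r_n$ is large: I must confirm that at the end of this block the denominator $n$ has already grown enough, i.e. that the supremum of the ratio over the whole excursion is attained near $m_k$ (giving the limsup) while the infimum is attained at the block boundaries $n_{k+1}$ (giving the liminf), and that these two competing scales are separated by the growth conditions on $\{n_k\}$ and $\{m_k\}$. The bookkeeping of how the unconstrained prefix digits $(\epsilon_1,\ldots,\epsilon_k)$ and the paddings $0^{n_k-k}$, $0^{p_k}$ interact with the run-length count is the technical heart; the choices $n_k > 2k+\Gamma_k$ and $m_k-n_k > n_k-k$ are precisely what make these interactions harmless in the limit.
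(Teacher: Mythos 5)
Your argument is correct and follows essentially the same route as the paper: the limsup is obtained from the long zero run ending at position $m_k$, and the liminf from the controlled maximal run at the block boundary $n\approx n_{k+1}$, using $\lim_k\frac{m_k-n_k}{m_k}=1$ and $\lim_k\frac{m_k-n_k}{n_{k+1}+m_k-n_k}=0$ respectively. Two of your intermediate bounds are imprecise but harmless: the final run $0^{m_k-n_k-1}$ merges with $0^{p_k}$ ($p_k<m_k-n_k$, not bounded) to give a run of length up to nearly $2(m_k-n_k)$, and the prefix merged with the padding $0^{n_k-k}$ can give a run as long as $n_k$ rather than $k+\Gamma_k$ --- both are still $o(n_{k+1})$ since $n_k<2(m_k-n_k)$ by the conditions $m_k-n_k>n_k-k$ and $n_k>2k+\Gamma_k$, so the conclusion stands.
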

\begin{proof}
For any $x \in U,$ it follows from the construction of $U$ that there exist infinitely many $k$ such that $\varepsilon(x,\beta)=(\epsilon_1,\ldots,\epsilon_k ,0^{n_k-k},(1,0^{m_k-n_k-1})^{t_k},0^{p_k})$ for some $(\epsilon_1,\ldots,\epsilon_k)\in \Sigma_\beta^k$. Now we are going to give the upper limit and lower limit of $\frac{r_n(x,\beta)}{n}$.

Let $n=n_{k+1}+m_k-n_k-1$. Since $m_k-n_k>\max\{2(m_{k-1}-n_{k-1}),n_k-k,M\}$, we obtain $$r_{n_{k+1}+m_k-n_k-1}(x,\beta)= m_k-n_k-1.$$ As a result,
  $$\liminf_{n\rightarrow \infty} \frac{r_n(x,\beta)}{n}\leq \lim_{k\rightarrow \infty}\frac{r_{n_{k+1}+m_k-n_k-1}(x,\beta)}{n_{k+1}+m_k-n_k-1}= \lim_{k\rightarrow \infty}\frac{m_k-n_k-1}{n_{k+1}+m_k-n_k-1}=0.$$

Let $n=m_k$. Note that $m_k-n_k>\max\{2(m_{k-1}-n_{k-1}),n_k-k,M\}$. The definition of $r_n(x,\beta)$ shows that $$r_{m_k}(x,\beta)=m_k-n_k-1.$$ It therefore follows that $$\limsup_{n\rightarrow \infty} \frac{r_n(x,\beta)}{n}\geq \lim_{k\rightarrow \infty}\frac{r_{m_k}(x,\beta)}{m_k}= \lim_{k\rightarrow \infty}\frac{m_k-n_k-1}{m_k}= \lim_{k\rightarrow \infty}\frac{m_k-n_k-1}{m_k}=1.$$

By the above discussion, we conclude that $$\liminf_{n\rightarrow \infty} \frac{r_n(x,\beta)}{n}=0\quad {\rm and}\quad \limsup_{n\rightarrow \infty} \frac{r_n(x,\beta)}{n}=1.$$Hence, $x\in E_{0,1}$ which gives $U \subseteq E_{0,1}$.
\end{proof}

\textbf{Proof of Theorem \ref{res}} It remains to show that for all $n\geq 1$, the set $$U_n=\bigcup_{k=n}^\infty\bigcup_{(\epsilon_1,\ldots,\epsilon_k) \in \Sigma_{\beta}^k}{\rm int} \left(I_{n_{k+1}}\left(\epsilon_1,\ldots,\epsilon_k ,0^{n_k-k},(1,0^{m_k-n_k})^{t_k},0^{p_k}\right)\right)$$ is dense in [0,1]. Now we will  concentrate on finding a real number $y\in U$ such that $|x-y|\leq r$ for every  $x\in [0,1]$ and $r > 0$. Suppose that $\varepsilon(x,\b)=(\varepsilon_1(x,\beta),\varepsilon_2(x,\beta),\ldots)$. Let $\ell'$ be an integer satisfying $\b^{-\ell'} \leq r$.  Let $\ell=\max\{n,\ell'\}.$  Since $(\varepsilon_1(x,\beta),\ldots,\varepsilon_\ell(x,\beta)) \in \Sigma^\ell_\beta$, we choose a point $$y\in {\rm int}\left(I_{n_{\ell+1}}\left(\epsilon_1,\ldots,\epsilon_\ell ,0^{n_\ell-\ell},(1,0^{m_\ell-n_\ell})^{t_\ell},0^{p_\ell}\right)\right).$$ Then it holds that $|x-y|\leq \b^{-\ell} \leq r$ and $y \in U_n$. To sum up, the set $$\bigcup_{k=n}^\infty\bigcup_{(\epsilon_1,\ldots,\epsilon_k) \in \Sigma_{\beta}^k}{\rm int} \left(I_{n_{k+1}} \left(\epsilon_1,\ldots,\epsilon_k ,0^{n_k-k},(1,0^{m_k-n_k})^{t_k},0^{p_k}\right)\right)$$ is dense in $[0,1]$.

Thus, we can conclude by the Baire Category Theorem that $U$ is residual in $[0,1]$. Then, $E_{0,1}$ is residual in $[0,1]$ by Lemma \ref{re}.

$\hfill\Box$

\section{Classical results of $\beta$-expansion in the parameter space}
In this section, we recall some important results of $\beta$-expansion in the parameter space $\{\beta\in\R:\beta>1\}$. The readers can refer to \cite{CC,HTY,LP,P,S} for more information.
\begin{definition}
We call a word $\omega=(\omega_1, \ldots,\omega_n)$ self-admissible if for all $1\leq i<n$, $$ \sigma ^i\omega \leq_{\rm{lex}} (\omega_1,\ldots,\omega_{n-i}). $$
An infinite sequence $\omega=(\omega_1, \omega_2,\ldots)$ is called self-admissible if $ \sigma ^i\omega <_{\rm{lex}}\omega$ for all $i\geq 1$.
\end{definition}

Denote by $\Lambda_n$ the set of all self-admissible words with length $n$, i.e.,\ $$\Lambda_n=\{\omega=(\omega_1, \omega_2,\ldots,\omega_n):\ {\rm for\ every}\ 1\leq i<n,\ \sigma ^i\omega \leq_{\rm{lex}} (\omega_1,\ldots,\omega_{n-i}) \}.$$ For convenience, for all $1<\beta_1<\beta_2$, let
\begin{equation}\label{la}
\Lambda_n(\beta_1,\beta_2)=\{\omega=(\omega_1,\ldots,\omega_n)\in \Lambda_n: \exists\ \beta\in(\beta_1,\beta_2]:\ {\rm s.t. }\ \varepsilon_1(\beta)=\omega_1,\ldots,\varepsilon_n(\beta)=\omega_n\}.
\end{equation}

The definition of self-admissible word immediately gives the following result.  The proof is evident and will be omitted.

\begin{proposition}\label{se}
For any $m\geq n\geq1$, let $\omega\in \Lambda_n$. Let $\beta>1$ whose infinite $\beta$-expansion of 1 satisfy $(\varepsilon_1^\ast(\beta),\ldots,\varepsilon_n^\ast(\beta))<_{\rm lex} \omega$. Then for all $v_1,v_2,\ldots,v_i\in \Sigma_\beta^m\ (i\geq 1)$, the concatenation  $\omega\ast v_1 \ast \cdots \ast v_j$ is still self-admissible for all $1\leq j\leq i$.
\end{proposition}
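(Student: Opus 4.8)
The plan is to reduce the statement to a single instance and then verify self-admissibility shift by shift. First I would observe that self-admissibility passes to initial segments: if a word $W$ is self-admissible and $W'$ is a prefix of $W$, then for every shift $p<|W'|$ one has $\sigma^p W'=(\sigma^p W)|_{|W'|-p}$, and truncating the inequality $\sigma^p W\le_{\rm lex}W|_{|W|-p}$ to length $|W'|-p$ preserves $\le_{\rm lex}$, since lexicographic order is monotone under passing to equal-length prefixes. Because $\omega\ast v_1\ast\cdots\ast v_j$ is a prefix of $\omega\ast v_1\ast\cdots\ast v_i$ for every $j\le i$, it suffices to prove that the longest word $W:=\omega\ast v_1\ast\cdots\ast v_i$ is self-admissible, i.e.\ that $\sigma^p W\le_{\rm lex}W|_{|W|-p}$ for all $1\le p<|W|$.

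The key quantity I would introduce is the first index $r$ at which $\omega$ exceeds the expansion of $1$: since $(\varepsilon_1^\ast,\ldots,\varepsilon_n^\ast)<_{\rm lex}\omega$, there is an $r\le n$ with $\varepsilon_i^\ast=\omega_i$ for $i<r$ and $\varepsilon_r^\ast<\omega_r$. The role of $r$ is that Parry's theorem (Theorem \ref{P}(1)) bounds every suffix of an admissible block: for each $v_\ell$ and each offset $k$ with $k+r\le m$ one has $(\sigma^k v_\ell)|_r\le_{\rm lex}(\varepsilon_1^\ast,\ldots,\varepsilon_r^\ast)<_{\rm lex}(\omega_1,\ldots,\omega_r)$, the last inequality being immediate from the choice of $r$. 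Hence any suffix $\sigma^p W$ that begins inside some block $v_\ell$ with at least $r$ symbols remaining before the end of that block already satisfies $(\sigma^p W)|_r<_{\rm lex}(\omega_1,\ldots,\omega_r)=W|_r$, whence $\sigma^p W<_{\rm lex}W$; this disposes of the bulk of the shifts at once. Shifts $p$ beginning inside $\omega$ are governed by the self-admissibility of $\omega$ itself, $\sigma^p\omega\le_{\rm lex}\omega|_{n-p}$, which settles the comparison unless the leading $n-p$ symbols tie.

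This leaves exactly two delicate families, which I expect to be the main obstacle: shifts beginning among the last $r-1$ symbols of a block $v_\ell$ (so the length-$r$ window spills into $v_{\ell+1}$ and the single-block admissibility bound no longer applies), and the tie subcase of a shift inside $\omega$. In both, the leading segment of $\sigma^p W$ coincides with a prefix of $\omega$ (equivalently with a prefix of $\varepsilon^\ast$, since the two agree before position $r$), so the comparison must be continued across a block boundary. To resolve these I would argue that a persistent tie forces a self-overlap of $\omega$ — concretely, equality of $\sigma^p\omega$ with $\omega|_{n-p}$ makes $\omega$ periodic of period $p$ — and then combine this periodicity with the Parry bound on the next block and, crucially, with the \emph{strict} inequality $\varepsilon_r^\ast<\omega_r$: the strict gap at position $r$ propagates so that the first genuine discrepancy occurs where $W$ (carrying a symbol of $\omega$, or of the following full block) dominates the shifted copy. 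It is precisely here that the hypothesis $(\varepsilon_1^\ast,\ldots,\varepsilon_n^\ast)<_{\rm lex}\omega$ is indispensable; without it a block boundary could manufacture a suffix (for instance a run of top digits straddling two admissible blocks) that defeats the prefix. I would organize this last step as a ``first violation'' argument: assuming a smallest $p$ together with the earliest discrepancy position, the matching is forced into one of the above tie configurations, and the strict inequality at $r$ produces the contradiction.
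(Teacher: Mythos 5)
Your reduction to the longest concatenation, the passage of self-admissibility to prefixes, and the disposal of the ``bulk'' of the shifts via the index $r$ are all sound. The problem is precisely the case you single out as delicate --- a shift beginning among the last $r-1$ symbols of a block --- and it cannot be closed by ``propagating the strict gap at $r$,'' because the statement is in fact false there. Take $\beta=(3+\sqrt{5})/2$, so that $\varepsilon^\ast(\beta)=(2,1,1,1,\ldots)$, and let $n=m=2$ and $\omega=(2,2)\in\Lambda_2$; then $(\varepsilon_1^\ast,\varepsilon_2^\ast)=(2,1)<_{\rm lex}\omega$ and $r=2$. The words $v_1=(0,2)$ and $v_2=(2,1)$ both lie in $\Sigma_\beta^2$ by Theorem~\ref{P}(1), yet $W=\omega\ast v_1\ast v_2=(2,2,0,2,2,1)$ satisfies $\sigma^3W=(2,2,1)>_{\rm lex}(2,2,0)=(W_1,W_2,W_3)$, so $W$ is not self-admissible. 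The mechanism is an index misalignment that your sketch glosses over: after the length-$s$ tail of $v_\ell$ ties with $(\omega_1,\ldots,\omega_s)=(\varepsilon_1^\ast,\ldots,\varepsilon_s^\ast)$ for some $s<r$, the opening digits of $v_{\ell+1}$ are compared against $\omega_{s+1},\omega_{s+2},\ldots$, whereas Parry's criterion only bounds them by $\varepsilon_1^\ast,\varepsilon_2^\ast,\ldots$. Since $\omega_{s+j}$ may exceed $\varepsilon_j^\ast$ (here $v_{2,1}=2=\omega_2$ while $\varepsilon_2^\ast=1$), the tie survives the block boundary and is then broken in the wrong direction by an early small digit of $v_\ell$ (here $v_{1,1}=0$). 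The strict inequality $\varepsilon_r^\ast<\omega_r$ sits at the wrong offset to help, so no ``first violation'' bookkeeping can rescue this configuration.

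Consequently the proposition needs an additional hypothesis ruling out the tie, rather than a cleverer argument. (The paper declares the proof evident and omits it, which is why you had nothing to compare against.) In every actual application in Section~6 the blocks belong to $\M'_d$ and therefore end with $0^N$, while the first disagreement between $\varepsilon^\ast(\widetilde{\beta}_N)$ and the relevant $\omega$ occurs by position $N$ and $\omega_1=\varepsilon_1(\beta_2)\geq 1$; hence every tail of a block of length $s<r$ is $0^s<_{\rm lex}(\omega_1,\ldots,\omega_s)$ and the delicate configuration never arises. If you add such a hypothesis --- for instance, that no suffix of any $v_\ell$ of length $s<r$ equals $(\omega_1,\ldots,\omega_s)$, which the $0^N$-suffix guarantees --- your outline becomes a viable proof; without it, the statement itself is the obstacle.
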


The characterization of the the $\beta$-expansion of $1$ was given by Parry \cite{P}.
\begin{theorem}[Parry \cite{P}]\label{P2}
An infinite sequence $(\omega_1,\omega_2,\ldots)$ is the  $\beta$-expansion of $1$ for some $\beta>1$ if and only if it is self-admissible.
\end{theorem}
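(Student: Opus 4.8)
The plan is to prove the two implications separately, dispatching necessity at once and concentrating the real work on sufficiency. For necessity, suppose $\omega=\varepsilon(1,\beta)$ for some $\beta>1$. Part (2) of Theorem \ref{P} asserts precisely that $\sigma^{k}\varepsilon(1,\beta)<_{\rm lex}\varepsilon(1,\beta)$ for every $k\geq 1$, which is exactly the definition of a self-admissible infinite sequence. So this direction requires nothing further.

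For sufficiency, suppose $\omega=(\omega_1,\omega_2,\ldots)$ is self-admissible. First I would record two elementary consequences obtained by comparing leading coordinates: since $\sigma^{k}\omega<_{\rm lex}\omega$ forces $\omega_{k+1}\leq\omega_1$, every digit is bounded by $\omega_1$; and, locating the first nonzero digit, one sees that $\omega_1=0$ would produce a shift lexicographically larger than $\omega$, whence $\omega_1\geq 1$. Boundedness makes $g(x):=\sum_{n\geq 1}\omega_n x^{-n}$ a continuous, strictly decreasing function on $(1,\infty)$ with $g(x)\to 0$ as $x\to\infty$ and $g(x)\uparrow\sum_{n\geq 1}\omega_n$ as $x\to 1^{+}$. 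Provided $\sum_{n\geq1}\omega_n>1$, the intermediate value theorem supplies a unique $\beta>1$ with $g(\beta)=1$, that is $1=\sum_{n\geq 1}\omega_n\beta^{-n}$. (The borderline sequence $(1,0,0,\ldots)$, for which this sum reaches $1$ only in the limit $x\to1^{+}$, is the one exception and is excluded by convention, being the $\beta$-expansion of $1$ for no $\beta>1$.)

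It then remains to check that the greedy algorithm applied to $1$ in base $\beta$ returns exactly $\omega$. Put $s_k:=\sum_{n\geq 1}\omega_{k+n}\beta^{-n}$, so that $s_0=1$ and the recursion $\beta s_k=\omega_{k+1}+s_{k+1}$ holds. Running the dynamics $x_0=1$, $x_{k+1}=T_\beta(x_k)$, an induction shows it suffices to establish $s_k\in[0,1)$ for all $k\geq 1$: granting this, $\lfloor\beta x_k\rfloor=\omega_{k+1}+\lfloor s_{k+1}\rfloor=\omega_{k+1}$ and $x_{k+1}=s_{k+1}$, whence $\varepsilon_{k+1}(1,\beta)=\omega_{k+1}$ for every $k$.

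The crux, and the main obstacle, is the bound $s_k<1$, which is the numerical counterpart of the lexicographic condition $\sigma^k\omega<_{\rm lex}\omega$; since lexicographic and numerical order need not agree for a single shift, I would derive it by a telescoping argument that invokes all shifts simultaneously. Fix $k$ and let $j\geq 1$ be the first index at which $\sigma^k\omega$ and $\omega$ differ, so $\omega_{k+j}\leq\omega_j-1$. Writing $s_k-1=\sum_{n\geq j}(\omega_{k+n}-\omega_n)\beta^{-n}$, bounding the term $n=j$ by $-\beta^{-j}$ and discarding the nonpositive contributions $-\omega_n$ in the tail yields $s_k-1\leq\beta^{-j}(s_{k+j}-1)$. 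Were some $s_k>1$, iterating this inequality would produce indices $k_i\to\infty$ with $s_{k_i}-1\geq\beta^{\,k_i-k}(s_k-1)\to\infty$, contradicting the uniform bound $s_{k_i}\leq\omega_1/(\beta-1)$. Hence $s_k\leq 1$ for all $k$. Finally, equality $s_k=1$ with $k\geq1$ would force the above bound to be tight, hence $\omega_n=0$ for all $n>j$ and $s_{k+j}=1$; but a sequence vanishing beyond position $j$ makes $s_{k+j}=0$, a contradiction. Thus $s_k<1$ for every $k\geq1$, the greedy identification goes through uniformly, and the proof is complete.
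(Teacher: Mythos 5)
The paper states this result as a citation to Parry and supplies no proof of its own, so there is no in-paper argument to compare against; judged on its own terms, your proof is correct and complete, and is essentially a reconstruction of Parry's original argument. Necessity is indeed immediate from Theorem \ref{P}(2). For sufficiency, the two load-bearing steps are both handled properly: the existence of $\beta$ via the intermediate value theorem applied to $g(x)=\sum_{n\geq 1}\omega_n x^{-n}$ (using $\omega_1\geq 1$ and the digit bound $\omega_n\leq\omega_1$ to get monotonicity, convergence, and the limits at the two ends), and the verification that the greedy algorithm reproduces $\omega$, which you correctly reduce to the strict bound $s_k<1$ for the tails $s_k=\sum_{n\geq 1}\omega_{k+n}\beta^{-n}$. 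The telescoping inequality $s_k-1\leq\beta^{-j}(s_{k+j}-1)$, iterated against the uniform bound $s_{k}\leq\omega_1/(\beta-1)$, is exactly the right device for converting the purely lexicographic hypothesis into the needed numerical one, and your equality analysis ruling out $s_k=1$ is sound. You also correctly observe that the statement as printed is very slightly too strong: the sequence $(1,0,0,\ldots)$ is self-admissible under the paper's definition yet is the expansion of $1$ for no $\beta>1$ (it corresponds to the degenerate value $\beta=1$), and your restriction to $\sum_{n\geq1}\omega_n>1$ isolates this unique exception cleanly.
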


Now we consider the \emph{cylinders in the parameter space} $\{\beta\in\R:\beta>1\}$.
\begin{definition}
For any $\omega=(\omega_1,\ldots,\omega_n)\in \Lambda_n$. The cylinder $I_n^P(\omega)$ associated to $\omega$ in the parameter space is the set of $\beta\in(1, +\infty)$ whose $\beta$-expansion of $1$ has the prefix $(\omega_1,\ldots,\omega_n)$, i.e.\  $$I^P_n(\omega):=\{\beta\in(1,+\infty):\varepsilon_1(\beta)=\omega_1,\ldots, \varepsilon_n(\beta)=\omega_n\}.$$
\end{definition}

The cylinders in the parameter space are intervals (see \cite[Lemma 4.1]{S}). The length of the cylinders of $\omega\in \Lambda_n$ in the parameter space is denoted by $|I^P_n(\omega)|$.  For simplicity, the left endpoint and right endpoint of $I^P_n(\omega)$ are written as $\underline{\beta}(\omega)$  and $\overline{\beta}(\omega)$ respectively.

To estimate the length of cylinders in the parameter space, we need the notion of \emph{recurrence time} $\tau(\omega)$ (see \cite{LP}) of the self-admissible word $\omega=(\omega_1,\ldots,\omega_n)\in \Lambda_n$. Define
$$\tau(\omega):=\inf\{1\leq k < n:\sigma^k(\omega_1,\ldots,\omega_n)=(\omega_1,\ldots, \omega_{n-k})\}.$$
If we cannot find such an integer $k$, we set $\tau(\omega)=n$. In this case, the self-admissible word $\omega$ is said to be \emph{non-recurrent}.

The above definition of recurrence time immediately provides the following properties.
\begin{remark}
(1) Write $$t(\omega):= n-\left \lfloor\frac{n}{\tau(\omega)}\right \rfloor\tau(\omega).$$
Then we have
$$(\omega_1,\ldots,\omega_n)=\left((\omega_1,\ldots,\omega_{\tau(\omega)})^{\lfloor \frac{n}{\tau(\omega)}\rfloor},\omega_1,\ldots,\omega_{t(\omega)}\right).$$

(2) If $\omega=(\omega_1,\ldots,\omega_n)$ is non-recurrent, then the word $(\omega_1,\ldots,\omega_n,0^\ell)$ is still non-recurrent for all $\ell \geq 1$.
\end{remark}


The following result gives the upper and lower bounds of the length of the cylinder $I^P_n(\omega)$.
\begin{lemma}[Schemling \cite{S}, Li, Persson, Wang and Wu \cite{LP}]\label{S}
Let $\omega=(\omega_1,\ldots,\omega_n)\in \Lambda_n$. We have the following inequalities:

(1) $|I^P_n(\omega)|\leq \overline{\beta}(\omega)^{-n+1};$

(2)$$|I^P_n(\omega)|\geq \left\{
\begin{aligned}
C(\omega)\overline{\beta}(\omega)^{-n}\ & , & when\ t(\omega)=0; \\
C(\omega)\overline{\beta}(\omega)^{-n}\left(\frac{\omega_{t(\omega)+1}}{\overline{\beta}(\omega)}+ \cdots+\frac{\omega_{\tau(\omega)}+1}{\overline{\beta}(\omega)^{\tau(\omega)-t(\omega)}}\right) & , & otherwise,
\end{aligned}
\right.$$where
\begin{equation}\label{cn}
C(\omega):=\frac{(\underline{\beta}(\omega)-1)^2}{\underline{\beta}(\omega)}.
\end{equation}
\end{lemma}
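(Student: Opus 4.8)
The plan is to realise $I_n^P(\omega)$ as an explicit interval whose endpoints are pinned down by the monotonicity in Theorem \ref{P}(3), and then to read off both bounds from a single mean value estimate applied to the strictly decreasing function $P_\omega(\beta)=\sum_{i=1}^n \omega_i\beta^{-i}$ (for $\beta>1$).

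First I would identify the two endpoints. Since $\beta\mapsto\varepsilon^\ast(\beta)$ is lexicographically increasing, the left endpoint $\underline{\beta}(\omega)$ is the parameter whose expansion of $1$ carries the lexicographically smallest admissible tail after $\omega$, namely $0^\infty$; thus $\underline{\beta}(\omega)$ is the unique root of $P_\omega(\beta)=1$, equivalently $T_{\underline{\beta}(\omega)}^n(1)=0$. The right endpoint $\overline{\beta}(\omega)$ carries the lexicographically largest self-admissible tail compatible with $\omega$, which by the definition of the recurrence time is the periodic repetition governed by the period $(\omega_1,\dots,\omega_{\tau(\omega)})$; this is precisely where $\tau(\omega)$ and $t(\omega)$ enter, and it lets me compute $T_{\overline{\beta}(\omega)}^n(1)$ explicitly as the value of that periodic tail. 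Admissibility of these two tails is exactly what Proposition \ref{se} and Parry's characterisation (Theorem \ref{P2}) are there to guarantee.

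Next I would express the length as a quotient. Using $1=\sum_{k\ge1}\varepsilon_k(\beta)\beta^{-k}$ one has $T_\beta^n(1)=\beta^n\big(1-P_\omega(\beta)\big)$ on the cylinder, so $P_\omega(\underline{\beta})=1$ and $P_\omega(\overline{\beta})=1-\overline{\beta}^{\,-n}T_{\overline{\beta}}^n(1)$. The mean value theorem then gives, for some $\xi\in(\underline{\beta},\overline{\beta})$,
\[
|I_n^P(\omega)|=\frac{\overline{\beta}^{\,-n}\,T_{\overline{\beta}}^n(1)}{-P_\omega'(\xi)},\qquad -P_\omega'(\xi)=\sum_{i=1}^n i\,\omega_i\,\xi^{-i-1}.
\]
The upper bound (1) follows by bounding the denominator from below: since $\sum_i i\omega_i\xi^{-i}\ge\sum_i\omega_i\xi^{-i}=P_\omega(\xi)$ is close to $1$ and $\xi<\overline{\beta}$, one gets $-P_\omega'(\xi)\gtrsim\overline{\beta}^{\,-1}$, while $T_{\overline{\beta}}^n(1)<1$, whence $|I_n^P(\omega)|\le\overline{\beta}^{\,-n+1}$. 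For the lower bound (2) I would instead bound the denominator from above, using $\omega_i\le\overline{\beta}$ together with the geometric estimate $\sum_i i\,\xi^{-i}\asymp \xi/(\xi-1)^2$ to produce a factor comparable to $C(\omega)=(\underline{\beta}-1)^2/\underline{\beta}$ from (\ref{cn}), and I would substitute the explicit value of $\overline{\beta}^{\,-n}T_{\overline{\beta}}^n(1)$ computed from the periodic tail in the first step: in the non-recurrent case $t(\omega)=0$ this contributes a factor of order $\overline{\beta}^{\,-n}$, whereas in the recurrent case it contributes exactly the sum $\tfrac{\omega_{t(\omega)+1}}{\overline{\beta}}+\cdots+\tfrac{\omega_{\tau(\omega)}+1}{\overline{\beta}^{\,\tau(\omega)-t(\omega)}}$.

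The main obstacle will be the two endpoint computations rather than the calculus. Proving that the lexicographically largest admissible continuation of $\omega$ is exactly the $\tau(\omega)$-periodic one — and tracking the boundary effect that promotes $\omega_{\tau(\omega)}$ to $\omega_{\tau(\omega)}+1$ in the quasi-greedy tail, thereby producing the $+1$ in the stated sum — requires careful use of the self-admissibility condition and of the recurrence-time decomposition, and it is here that the recurrent versus non-recurrent distinction must be kept straight. Securing the sharp exponent in the upper bound (rather than a lossy $\overline{\beta}^{\,-n+2}$) also hinges on the refined lower estimate on $-P_\omega'(\xi)$ described above, which is the most delicate inequality of the argument.
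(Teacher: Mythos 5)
First, note that the paper does not prove this lemma at all: it is imported verbatim from Schmeling \cite{S} and Li--Persson--Wang--Wu \cite{LP}, so there is no internal argument to compare yours against. Your reconstruction is nonetheless essentially the argument of those sources. The endpoint identification is the real content and you have located it correctly: $\underline{\beta}(\omega)$ is the root of $P_\omega(x)=1$, and $\overline{\beta}(\omega)$ is the parameter whose infinite expansion of $1$ is the $\tau(\omega)$-periodic word $(\omega_1,\ldots,\omega_{\tau(\omega)})^\infty$; proving the latter from self-admissibility is the step you rightly flag as the main burden (it is Lemma 4.1--4.2 of \cite{LP}). Granting it, your lower bound goes through exactly: the tail value computes to $\frac{\omega_{t+1}}{\overline{\beta}}+\cdots+\frac{\omega_{\tau}}{\overline{\beta}^{\tau-t}}+\overline{\beta}^{-(\tau-t)}$, which is precisely the bracket with the $+1$ (and equals $1$ when $t(\omega)=0$), while $-P_\omega'(\xi)=\sum_i i\omega_i\xi^{-i-1}\le \underline{\beta}\sum_i i\,\underline{\beta}^{-i-1}=\underline{\beta}/(\underline{\beta}-1)^2=C(\omega)^{-1}$ using $\omega_i\le\omega_1\le\underline{\beta}$ and $\xi\ge\underline{\beta}$. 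So part (2) is secure.

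The gap is in part (1). Your proposed lower estimate on the denominator, $-P_\omega'(\xi)\ge\xi^{-1}P_\omega(\xi)$ with ``$P_\omega(\xi)$ close to $1$,'' does not close to the stated constant. Take $\omega=(1,0^{n-1})$ with $\beta\in(1,2)$: there $P_\omega(x)=x^{-1}$, so $P_\omega(\xi)=\xi^{-1}$ is nowhere near $1$, and the best you can certify is $-P_\omega'(\xi)\ge\overline{\beta}^{-1}P_\omega(\overline{\beta})=\overline{\beta}^{-1}(1-\overline{\beta}^{-n}V)$ with $V$ the tail value, which yields only $|I_n^P(\omega)|\le\overline{\beta}^{-n+1}V/(1-\overline{\beta}^{-n}V)$. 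In this example $V=1$ and one checks that $|I_n^P(\omega)|=\overline{\beta}-1=\overline{\beta}^{-n+1}$ exactly, i.e.\ (1) holds with equality and there is no slack to absorb the factor $(1-\overline{\beta}^{-n})^{-1}$; the mean value point sits at $\xi=\sqrt{\overline{\beta}}$, exactly where your inequality degenerates to an equality you cannot certify a priori. So the route as described proves (1) only up to a bounded multiplicative constant (indeed the crude version $-P_\omega'(\xi)\ge\omega_1\xi^{-2}$ gives the ``lossy'' $\overline{\beta}^{-n+2}$ you mention). This does not matter for any use of the lemma in this paper, where constants are harmless for dimension computations, but as a proof of the inequality as stated it is incomplete; the sharp constant requires a different manipulation of the two expansions of $1$ at the endpoints rather than a pointwise bound on $-P_\omega'$ at an uncontrolled intermediate point.
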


The study of the parameter space usually concerns on the set of parameters with respect to which the approximation properties of the orbit of $1$ are prescribed. Persson and Schmeling \cite{PS} proved the following result.
\begin{theorem}[Persson and Schmeling \cite{PS}]\label{ps}
Let $v\geq0$. Then $$\dim_{\rm H}\{\beta\in(1,2):v_{\beta}(1)\geq v\}=\frac{1}{1+v}.$$
\end{theorem}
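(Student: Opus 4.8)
The plan is to prove the two matching bounds for $A_v:=\{\beta\in(1,2):v_\beta(1)\ge v\}$, the case $v=0$ being trivial since then $A_0=(1,2)$ and $\tfrac1{1+0}=1$. The combinatorial bridge is the observation that $T_\beta^n1$ is small exactly when the infinite $\beta$-expansion of $1$ carries a long run of zeros just after position $n$: up to a bounded additive error, $T_\beta^n1\le\beta^{-k}$ amounts to $\varepsilon^\ast_{n+1}(\beta)=\cdots=\varepsilon^\ast_{n+k}(\beta)=0$. Hence $v_\beta(1)\ge v$ means that for every $v'<v$ the expansion $\varepsilon^\ast(\beta)$ has a zero-run of length at least $\lfloor v'n\rfloor$ beginning at position $n+1$ for infinitely many $n$, so $A_v$ is a $\limsup$ set built from parameter cylinders ending in a zero block. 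I would then run the usual two-sided scheme: a covering argument for the upper bound and a Cantor construction plus the mass distribution principle (Theorem \ref{mp}, in its parameter-space form) for the lower bound.

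For the upper bound I would fix $v'<v$ and, using countable stability of Hausdorff dimension, restrict to $[1+\delta,2)$ and finally let $\delta\downarrow0$. Writing $m_n=\lfloor v'n\rfloor$, the set $A_v\cap[1+\delta,2)$ is contained, for every $n_0$, in $\bigcup_{n\ge n_0}\bigcup_\omega I^P_{n+m_n}(\omega,0^{m_n})$, where $\omega$ ranges over self-admissible words of length $n$ whose cylinder meets $[1+\delta,2)$. Lemma \ref{S}(1) bounds each covering length by $\overline{\beta}(\omega)^{-(n+m_n-1)}$, while Lemma \ref{S}(2) together with (\ref{cn}) gives $|I^P_n(\omega)|\ge c_\delta\,\overline{\beta}(\omega)^{-n}$ on $[1+\delta,2)$; since the order-$n$ parameter cylinders tile the interval, $\sum_\omega\overline{\beta}(\omega)^{-n}\le c_\delta^{-1}$. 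For $s>\tfrac1{1+v'}$ one has $(1+v')s>1$, so $(n+m_n-1)s\ge(1+\eta)n$ eventually for some $\eta>0$, and using $\overline{\beta}(\omega)\ge1+\delta$,
$$\sum_{\omega}\big|I^P_{n+m_n}(\omega,0^{m_n})\big|^{s}\;\le\;\sum_\omega\overline{\beta}(\omega)^{-(n+m_n-1)s}\;\le\;(1+\delta)^{-\eta n}\sum_\omega\overline{\beta}(\omega)^{-n}\;\le\;c_\delta^{-1}(1+\delta)^{-\eta n},$$
which is summable in $n$. Thus $\mathcal{H}^{s}\!\big(A_v\cap[1+\delta,2)\big)=0$; letting $s\downarrow\tfrac1{1+v'}$, then $v'\uparrow v$, then $\delta\downarrow0$ yields $\dim_{\rm H}A_v\le\tfrac1{1+v}$.

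For the lower bound I would mimic the Cantor construction of Section 3.2, but in the parameter space. Fix a base $\beta_0<2$ close to $2$ and build free blocks from words in $\Sigma_{\beta_0}^{m}$, which by Theorem \ref{R} give $\ge\beta_0^{m}$ choices and, by Proposition \ref{se}, remain self-admissible under concatenation after a suitable leading word; by Theorem \ref{P2} every such infinite sequence is the expansion of $1$ for a genuine $\beta\in(1,2)$. I would interleave these free blocks with zero-runs of length $\lfloor v'n_k\rfloor$ inserted at a sparse schedule $\{n_k\}$ tuned exactly as in (\ref{inf2})–(\ref{sup2}), so that the zero-runs force $v_\beta(1)\ge v'$ while occupying a vanishing proportion of the length. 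Distributing the measure $\mu$ with equal weight on the free blocks (as in (\ref{mu})), estimating $\liminf\frac{\log\mu(I^P_n)}{\log|I^P_n|}$ via the two-sided bounds of Lemma \ref{S}, and invoking Theorem \ref{mp} would drive the local dimension to $\tfrac1{1+v'}$; letting $v'\uparrow v$ and $\beta_0\uparrow2$ gives $\dim_{\rm H}A_v\ge\tfrac1{1+v}$.

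The hard part will be the length estimates tied to the recurrence time. Unlike the phase space, where Proposition \ref{re2} produces clean near-full cylinders, in the parameter space $|I^P_n(\omega)|$ depends through Lemma \ref{S}(2) on $\tau(\omega)$, so the comparison $|I^P_n(\omega)|\asymp\overline{\beta}(\omega)^{-n}$ can fail for nearly periodic words. The key is that appending the zero block $0^{m_n}$ controls recurrence — a long trailing run of zeros makes the extended word non-recurrent, forcing $t(\omega,0^{m_n})=0$ and restoring the clean bound — and that every concatenation in the Cantor set must be checked self-admissible via Proposition \ref{se}. Reconciling these two demands simultaneously, namely enough branching for dimension together with self-admissibility and controlled cylinder length, is where the genuine work lies.
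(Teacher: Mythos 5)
First, a point of reference: the paper does not prove Theorem \ref{ps} at all --- it is quoted from Persson and Schmeling \cite{PS} and used as a black box for the upper bounds in Section 6 --- so there is no in-paper argument to compare yours against. Your overall route (translate $v_\beta(1)\ge v$ into zero-runs of length about $v'n$ after position $n$ in the expansion of $1$, cover for the upper bound, build a sparse-zero-block Cantor set with a uniformly distributed measure for the lower bound) is the standard one, and your lower-bound half closely parallels the paper's own Section 6 construction, naming the right ingredients: Proposition \ref{se} for self-admissibility of concatenations, non-recurrence via trailing zero blocks so that Lemma \ref{S}(2) applies cleanly, and a ball-versus-cylinder count in place of Theorem \ref{mp}.

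There is, however, a genuine flaw in your upper bound. You claim that Lemma \ref{S}(2) gives $|I^P_n(\omega)|\ge c_\delta\,\overline{\beta}(\omega)^{-n}$ for every self-admissible $\omega$ whose cylinder meets $[1+\delta,2)$, and you deduce $\sum_\omega\overline{\beta}(\omega)^{-n}\le c_\delta^{-1}$ from the tiling of the parameter interval. Lemma \ref{S}(2) gives no such uniform bound: when $t(\omega)\neq0$ the correction factor $\frac{\omega_{t(\omega)+1}}{\overline{\beta}(\omega)}+\cdots+\frac{\omega_{\tau(\omega)}+1}{\overline{\beta}(\omega)^{\tau(\omega)-t(\omega)}}$ can be as small as $\overline{\beta}(\omega)^{-(n-2)}$ (take $\omega=(1,0^{n-2},1)$, for which $\tau(\omega)=n-1$, $t(\omega)=1$ and all intermediate digits vanish), and parameter cylinders of order $n$ can genuinely be as short as roughly $\overline{\beta}(\omega)^{-2n}$ for nearly periodic words. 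Indeed, you say exactly this in your final paragraph (``the comparison $|I^P_n(\omega)|\asymp\overline{\beta}(\omega)^{-n}$ can fail for nearly periodic words''), which contradicts the step your covering sum rests on. The estimate you need is still true in a weaker but sufficient form: the number of order-$n$ parameter cylinders meeting $(1,\beta'']$ is $O\bigl((\beta'')^{n}\bigr)$ (in the spirit of Theorems \ref{P}(3) and \ref{R}, cf.\ \cite{LP}), and slicing $[1+\delta,2)$ into $O(n)$ windows on which $\overline{\beta}(\omega)$ varies by a factor $1+1/n$ yields $\sum_\omega\overline{\beta}(\omega)^{-n}=O_\delta(n)$, which your exponential factor $(1+\delta)^{-\eta n}$ still absorbs. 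As written, though, the counting step is unjustified, and this is precisely the place where the parameter-space geometry differs from the phase space.
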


Analogous to Theorem \ref{bl}, Bugeaud and Liao \cite{BL} obtained the following theorem in the parameter space.
\begin{theorem}[Bugeaud and Liao \cite{BL}]\label{bl1}
Let $\beta>1$. Let $0<\hat{v}<1$ and $v>0$. If $v<\frac{\hat{v}}{1-\hat{v}}$, then the set $$U(\hat{v},v):=\{\beta\in(1,2):\hat{v}_{\beta}(1)= \hat{v},\ v_{\beta}(1)=v\}$$ is empty. Otherwise, we have $$\dim_{\rm H}U(\hat{v},v)=\frac{v-(1+v)\hat{v}}{(1+v)(v-\hat{v})}.$$ Moreover, $$\dim_{\rm H}\{\beta\in(1,2):\hat{v}_{\beta}(1)=\hat{v}\}=\left(\frac{1-\hat{v}}{1+\hat{v}}\right)^2.$$
\end{theorem}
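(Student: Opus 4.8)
The plan is to reduce Theorem \ref{bl1} to a run-length computation in the parameter space and then reproduce, with the self-admissible machinery of Section 5, the Cantor construction of Section 3.2. First I would establish parameter-space analogues of Lemmas \ref{ship} and \ref{ship1}: writing $r_n(\beta)$ for the maximal run of zeros among the first $n$ digits of $\varepsilon(1,\beta)$, and setting $a=\hat v/(1+\hat v)$ and $b=v/(1+v)$, one checks by the same argument (now applied to the orbit $T_\beta^n 1$) that
$$\hat v_\beta(1)=\hat v \iff \liminf_{n\to\infty}\frac{r_n(\beta)}{n}=a, \qquad v_\beta(1)=v \iff \limsup_{n\to\infty}\frac{r_n(\beta)}{n}=b.$$
Under this dictionary $U(\hat v,v)$ coincides, up to the countable (hence dimension-zero) set of simple Parry numbers, with the set $E_{a,b}^P$ of (\ref{ab2}); the non-emptiness threshold $v\ge \hat v/(1-\hat v)$ becomes $a\le b/(1+b)$, and a direct calculation shows the target dimension $\frac{v-(1+v)\hat v}{(1+v)(v-\hat v)}$ equals $1-\frac{b^2(1-a)}{b-a}$. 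Thus it suffices to prove $\dim_{\rm H}E_{a,b}^P=1-\frac{b^2(1-a)}{b-a}$ and that $E_{a,b}^P=\emptyset$ when $a> b/(1+b)$.

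The emptiness statement repeats the elementary argument leading to (\ref{le}): a zero block realizing the limsup at scale $n_k$ persists for roughly $\lfloor bn_k\rfloor$ further digits, which forces $\liminf r_n(\beta)/n\le b/(1+b)$. For the upper bound on the dimension I would cover $E_{a,b}^P$ by the parameter-space cylinders $I_n^P(\omega)$, $\omega\in\Lambda_n$, whose zero-run profile is compatible with the prescribed pair $(a,b)$, and bound their lengths using Lemma \ref{S}(1), namely $|I_n^P(\omega)|\le\overline\beta(\omega)^{-n+1}$. The boundary case $a=0$ is immediate from Theorem \ref{ps}, since $E_{0,b}^P\subseteq\{\beta:v_\beta(1)\ge b/(1-b)\}$, whose dimension equals $1-b$.

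The substance is the lower bound, obtained by constructing a Cantor subset of $E_{a,b}^P$ and applying the mass distribution principle (Theorem \ref{mp}) with the cylinder lengths of Lemma \ref{S}. I would choose sequences $\{n_k\}$, $\{m_k\}$ with $\frac{m_k-n_k}{n_{k+1}+m_k-n_k}\to a$ and $\frac{m_k-n_k}{m_k}\to b$ exactly as in (\ref{inf2})--(\ref{sup2}). Over the digit range $(n_k,m_k]$ I insert one long zero block of length about $m_k-n_k$ to manufacture the runs realizing the limsup $b$, and over $(m_k,n_{k+1}]$ I pack generic self-admissible blocks containing only short runs, so that the liminf equals $a$. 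Counting admissible choices by Theorem \ref{R}, spreading a measure $\mu$ uniformly over the surviving cylinders, and computing $\liminf_{n}\log\mu(I_n^P)/\log|I_n^P|$ through the three-case analysis of Section 3.2 yields the value $1-\frac{b^2(1-a)}{b-a}$ after a Stolz--Ces\`aro limit.

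The hard part, and the genuine departure from the phase-space proof, is preserving self-admissibility of every prefix throughout the construction: in the phase space one freely appends admissible suffixes after a full cylinder, whereas here each prefix of $\varepsilon(1,\beta)$ must lexicographically dominate all of its shifts. The inserted blocks must therefore be kept strictly below the running prefix, which is precisely the content of Proposition \ref{se} and forces the leading block to dominate everything that follows. Compounding this, the length estimates of Lemma \ref{S} involve the recurrence time $\tau(\omega)$ and the factor $C(\omega)$ of (\ref{cn}), and the long zero blocks make the words highly recurrent; controlling these quantities, in place of the clean uniform bound $|I_n|\le\beta^{-n}$ available in the phase space, is where the real care is concentrated. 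Finally, the ``moreover'' formula follows by writing $\{\beta:\hat v_\beta(1)=\hat v\}=\bigcup_v U(\hat v,v)$ and taking the supremum of the dimension over admissible $v$; optimizing $1-\frac{b^2(1-a)}{b-a}$ in $b$ gives an interior maximum at $b=2a$ (that is, $v=2\hat v/(1-\hat v)$), with value $(1-2a)^2=\big(\frac{1-\hat v}{1+\hat v}\big)^2$.
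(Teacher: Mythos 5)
First, note that the paper itself does not prove this statement: Theorem \ref{bl1} is quoted from Bugeaud and Liao \cite{BL} and then used as an ingredient in the proof of Theorem \ref{ab'}. Your proposal therefore runs in the opposite direction to the paper's logic: the paper derives $\dim_{\rm H}E^P_{a,b}$ from Theorem \ref{bl1}, whereas you want to derive Theorem \ref{bl1} from the dimension of $E^P_{a,b}$. That is not intrinsically circular, provided you supply independent proofs of both bounds. Your dictionary (parameter-space versions of Lemmas \ref{ship} and \ref{ship1} applied to the orbit of $1$, discarding the countable set of simple Parry numbers), the emptiness argument, the algebra identifying $\frac{v-(1+v)\hat v}{(1+v)(v-\hat v)}$ with $1-\frac{b^2(1-a)}{b-a}$, and the optimization at $b=2a$ giving $(1-2a)^2=\left(\frac{1-\hat v}{1+\hat v}\right)^2$ are all correct, and your lower-bound Cantor construction is essentially the paper's Section 6 construction, including the genuinely delicate points you correctly single out (self-admissibility via Proposition \ref{se}, non-recurrence, and the length estimates of Lemma \ref{S}).

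The genuine gap is the upper bound when $\hat v>0$ (equivalently $a>0$). Covering $E^P_{a,b}$ by cylinders whose zero-run profile is compatible with $(a,b)$ and invoking Lemma \ref{S}(1) gives, without further input, only the bound $1-b=\frac{1}{1+v}$: that is the Theorem \ref{ps}-type estimate, which uses the limsup condition alone. The entire content of the Bugeaud--Liao theorem is that the uniform (liminf) condition lowers the dimension from $\frac{1}{1+v}$ to $\frac{v-(1+v)\hat v}{(1+v)(v-\hat v)}$, and extracting this requires a multi-scale covering organized by the sequence of positions and lengths $(n_k,m_k)$ of the long zero runs, subject to $m_k-n_k\ge a(n_{k+1}+m_k-n_k)$ (from the liminf) and $m_k-n_k\le (b+\epsilon)m_k$ (from the limsup), together with a count of the admissible digit blocks between runs and an optimization over all such sequences. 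None of this appears in your sketch, and within this paper you cannot borrow it from Theorem \ref{bl1} itself without circularity --- indeed the paper's proof of Theorem \ref{ab'} obtains exactly this upper bound only by citing Theorem \ref{bl1}. A smaller gap of the same nature occurs in your ``moreover'' step: writing $\{\beta:\hat v_\beta(1)=\hat v\}=\bigcup_v U(\hat v,v)$ and taking the supremum of dimensions is legitimate only for the lower bound, since the union is uncountable; the upper bound needs either a covering argument carried out uniformly in $v$ or a countable decomposition with estimates uniform on each piece.
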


\section{ Proof of Theorem \ref{ab'}}
As the same discussion at the first part of Section 3, it holds that $\dim_{\rm H} E^P_{0,0}$ is of full Lebesgue measure by using the result of Cao and Chen \cite{CC} that the set $$\left\{\beta\in(1,2):\lim\limits_{n\rightarrow \infty}\frac{r_n(\beta)}{\log_\beta n}=1\right\}$$ is of full Lebesgue measure. By the same argument as the proof of Theorem \ref{ab} for the case  $a>\frac{b}{1+b},\ 0< b\leq1$ in the end of Section 3.1, we get that $E^P_{a,b}$ is empty when $a>\frac{b}{1+b},\ 0< b\leq1$.

When $0<a\leq \frac{b}{1+b},\ 0< b<1$, Lemmas \ref{ship} and \ref{ship1} imply that $$E^P_{a,b}=\left\{\beta \in(1,2):\hat{v}_{\beta}(1)=\frac{a}{1-a},\ v_{\beta}(1)=\frac{b}{1-b}\right\}=U\left(\frac{a}{1-a},\frac{b}{1-b}\right).$$ Then by Theorem \ref{bl1}, it holds that $$\dim_{\rm H}E^P_{a,b}=\dim_{\rm H}U\left(\frac{a}{1-a},\frac{b}{1-b}\right)=1-\frac{b^2(1-a)}{b-a}.$$

But Theorem \ref{bl1} is not applicable for the case of $a=0,\ 0< b<1$ and $0<a\leq\frac{1}{2},\ b=1$. Note that $E^P_{a,1}\subseteq F^P_1$ where $F^P_1$ is defined by (\ref{bb}). So we first give the Hausdorff dimension of $F^P_1$. Since $$F^P_1=\left\{\beta\in(1,2):v_{\beta}(1)=+\infty\right\}\subseteq  \left\{\beta\in(1,2):v_{\beta}(1)\geq v\right\}$$ for all $v>0$, we have $$\dim_{\rm H}F^P_1\leq \dim_{\rm H}\left\{\beta\in(1,2):v_{\beta}(1)\geq v\right\}=\frac{1}{1+v}$$ where the last equality follows from Theorem \ref{ps}. Letting $v\rightarrow +\infty$, we have $\dim_{\rm H}F^P_1\leq0$. This implies $\dim_{\rm H}E^P_{a,1}\leq0$. In conclusion, $\dim_{\rm H}E^P_{a,1}=0$ for any $0< a\leq \frac{1}{2}$. For the other case, we have $$E^P_{0,b}\subseteq \left\{\beta\in(1,2):v_{\beta}(1)\geq \frac{b}{1-b}\right\}.$$ By Theorem \ref{ps}, we deduce that the upper bound of $\dim_{\rm H}E^P_{0,b}$ is $1-b$ for all $0<b<1$. Hence, we only need to give the lower bound of $\dim_{\rm H}E_{0,b}$ for all $0<b<1$. We also include our proof of the case $0<a\leq \frac{b}{1+b},\ 0< b<1$.

For every $1<\beta_1<\beta_2<2$, instead of dealing with the Hausdorff dimension of the set $E_{a,b}^P$ directly, we will technically investigate the Hausdorff dimension of the following set. For all $0\leq a\leq \frac{b}{1+b},\ 0< b\leq 1$, let
\begin{equation}\label{ab3}
E_{a,b}^P(\beta_1,\beta_2)=\left\{\beta\in[\beta_1,\beta_2):\liminf_{n\rightarrow \infty}\frac{r_n(\beta)}{n}=a,\ \limsup_{n\rightarrow \infty}\frac{r_n(\beta)}{n}=b\right\}.
\end{equation}

For all $1<\beta_1<\beta_2<2$, throughout this section, we assume that both $\beta_1$ and  $\beta_2$ are not simple Parry number. We will give the lower bound of $\dim_{\rm H} E_{a,b}^P(\beta_1,\beta_2)$ for all $1<\beta_1<\beta_2<2$.

Suppose that $N$ is a large enough integer such that $\varepsilon_N(\beta_2)>0$ and $$(\varepsilon_1(\beta_1),\ldots,\varepsilon_N(\beta_1))<_{\rm lex}(\varepsilon_1(\beta_2),\ldots,\varepsilon_N(\beta_2))$$ Let $\widetilde{\beta}_N$ be the unique solution of the  equation:$$1=\frac{\varepsilon_1(\beta_2)}{x}+\cdots+\frac{\varepsilon_N(\beta_2)}{x^N}.$$ Then $$\varepsilon^\ast(\widetilde{\beta}_N)=(\varepsilon_1(\beta_2),\ldots,\varepsilon_{N}(\beta_2)-1)^\infty.$$ An observation of the lexicographical order of $\varepsilon^\ast(\beta_1)$, $\varepsilon^\ast(\beta_2)$ and $\varepsilon^\ast(\widetilde{\beta}_N)$ implies $\beta_1<\widetilde{\beta}_N<\beta_2$ and $\widetilde{\beta}_N\rightarrow \beta_2$ as $N$ tends to infinity.

For every $k\geq 1$, similar to what we did in Section 3.2, we take two sequences $\{n_k\}_{k=1}^\infty$ and $\{m_k\}_{k=1}^\infty$ such that $n_k<m_k<n_{k+1}$ with $n_1>2N$ and $m_k-n_k> m_{k-1}-n_{k-1}$ with $m_1-n_1> 2N$. In addition,
$$\lim_{k\rightarrow \infty}\frac{m_k-n_k}{n_{k+1}+m_k-n_k}=a\quad {\rm and}\quad \lim_{k\rightarrow \infty}\frac{m_k-n_k}{m_k}=b.$$
We can choose such two sequences by the same way in Section 3.2.

Now let us construct a Cantor set contained in $E_{a,b}^P(\beta_1,\beta_2)$ as follows.

For any integer $d>2N$, we set
\begin{equation}
\M'_d=\{\omega=(\varepsilon_1(\beta_2),\ldots,\varepsilon_N(\beta_2)-1,\omega_1,\ldots,\omega_{d-2N},0^N): (\omega_1,\ldots,\omega_{d-2N}) \in \Sigma_{\widetilde{\beta}_N}^{d-2N}\}.
\end{equation} Let
$$G'_1=\{(\varepsilon_1(\beta_2),\ldots,\varepsilon_N(\beta_2),\omega_1,\ldots,\omega_{d-2N},0^N): (\omega_1,\ldots,\omega_{d-2N}) \in \Sigma_{\widetilde{\beta}_N}^{d-2N}\}.$$

Note that $\left(\varepsilon^\ast_1(\widetilde{\beta}_N),\ldots,\varepsilon^\ast_N(\widetilde{\beta}_N)\right)<_{\rm lex}(\varepsilon_1(\beta_2),\ldots,\varepsilon_{N}(\beta_2))$. Now we give some observations on the elements in $G'_1$ as follows.
\begin{remark}
(1) For all $\omega\in G'_1$, since $\left(\omega_1,\ldots,\omega_{d-2N},0^N\right)\in \Sigma_{\widetilde{\beta}_N}^{d-N}\ (d>2N)$, by Proposition \ref{se}, $\omega$ is self-admissible.

(2) For every $u\in \M'_d\ (d>2N)$, we have $u\in \Sigma_{\widetilde{\beta}_N}^d$. By Lemma \ref{se}, the word $\omega\ast u$ is still self-admissible for every  all  $\omega\in G'_1$.
\end{remark}

For every $k\geq 1$, write $n_{k+1}=(m_k-n_k)t_k+m_k+p_k$ with $0\leq p_k<m_k-n_k$, then define
$G'_{k+1}=$\\$$\{u_{k+1}=\left(\varepsilon_1(\beta_2),\ldots,\varepsilon_N(\beta_2)-1,0^{m_k-n_k-N},u_k^{(1)},\ldots,u_k^{(t_k)},u_k^{(t_k+1)}\right):\ u_k^{(i)}\in \M'_{m_k-n_k},\ 1\leq i\leq t_k \},$$ where
$$u_k^{(t_k+1)}=\left\{
\begin{aligned}
0^{p_k} & , &{\rm when}\ \ p_k \leq 2N; \\
\omega \in \M'_{p_k} & , &{\rm when}\ \  p_k > 2N.\\
\end{aligned}
\right.$$
Let
\begin{equation}
D'_k= \left\{(u_1,\ldots,u_k):u_i\in G'_i,\ 1\leq i\leq k\right\}.
\end{equation}
Notice that every $u_k\in G'_k$ ends with $0^N$. This guarantees that $(u_1,\ldots,u_k)$ can concatenate with any $u_{k+1}$ to be a new self-admissible word. As a result, the set $D'_k$ is well-defined.

As the classical technique of constructing a Cantor set, let $$E(\beta_1,\beta_2)=\bigcap_{k=1}^\infty\bigcup_{u \in D'_k}I_{n_k}^P(u).$$

Similar to the process of Section 3, we now give the following result which means that $E(\beta_1,\beta_2)$ is a subset of $E_{a,b}^P(\beta_1,\beta_2)$.
\begin{lemma}
For every $1<\beta_1<\beta_2<2$, $E(\beta_1,\beta_2)\subset E_{a,b}^P(\beta_1,\beta_2)$ for all $0\leq a\leq \frac{b}{1+b}$ and $0<b< 1$.
\end{lemma}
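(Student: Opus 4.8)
The plan is to show $E(\beta_1,\beta_2)\subseteq E_{a,b}^P(\beta_1,\beta_2)$ by verifying two things for every $\beta\in E(\beta_1,\beta_2)$: first that $\beta\in[\beta_1,\beta_2)$, and second that the run-length function $r_n(\beta)$ of the $\beta$-expansion of $1$ has the prescribed limit inferior $a$ and limit superior $b$ when divided by $n$. The containment in $[\beta_1,\beta_2)$ is the easier half: by construction each word in $G'_k$ begins with the prefix $(\varepsilon_1(\beta_2),\ldots,\varepsilon_N(\beta_2)-1)$ and the inserted blocks come from $\Sigma_{\widetilde\beta_N}$, so each cylinder $I_{n_k}^P(u)$ lies between $\widetilde\beta_N$ and $\beta_2$; intersecting over $k$ pins $\beta$ into $[\beta_1,\beta_2)$ because $\beta_1<\widetilde\beta_N<\beta_2$ and $\beta_2$ is not a simple Parry number, so the right endpoint is never attained.

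The substantive half is computing the two limits. First I would use Theorem \ref{P2}, which identifies the $\beta$-expansion of $1$ with its self-admissible defining sequence, so that for $\beta\in E(\beta_1,\beta_2)$ the sequence $\varepsilon^\ast(\beta)$ agrees with the concatenated Cantor word through length $n_k$ for every $k$. This reduces the parameter-space statement to a purely combinatorial count of consecutive zeros in an explicitly described symbolic sequence — exactly the kind of computation already carried out for the point-space case in Lemma \ref{sub}. The key structural features are: the long block of $m_k-n_k-N$ zeros inserted at the start of each $u_{k+1}$ (which, together with the trailing $0^N$ of $u_k$, creates a run of roughly $m_k-n_k$ zeros), and the fact that inside each $\M'_d$ block the digits are genuine $\widetilde\beta_N$-admissible words separated by nonzero leading symbols, so no anomalously long zero-run is created there.

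My plan is therefore to distinguish the same three ranges of $n$ as in Lemma \ref{sub}. For $n$ near the end of stage $k+1$ (around $n_{k+1}+m_k-n_k$, just before the next large zero-block finishes growing) the maximal run is about $m_k-n_k$, sitting over a denominator about $n_{k+1}$, and by the choice $\lim_k\frac{m_k-n_k}{n_{k+1}+m_k-n_k}=a$ these ratios tend to $a$, giving $\liminf r_n/n\le a$. For $n=m_k$ the run has just reached its length $m_k-n_k$ over denominator $m_k$, and $\lim_k\frac{m_k-n_k}{m_k}=b$ forces $\limsup r_n/n\ge b$. The reverse inequalities $\liminf\ge a$ and $\limsup\le b$ follow from the monotonicity bounds analogous to the three-case estimate in Lemma \ref{sub}, using $m_k-n_k>m_{k-1}-n_{k-1}$ to control contributions from earlier stages and the inequality (\ref{f}) to compare ratios. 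The main obstacle I anticipate is bookkeeping: one must track the extra $0^N$ padding (present in the parameter-space construction but not the point-space one) and confirm it does not disturb the leading-order count, and one must check that the nonzero digit $\varepsilon_N(\beta_2)-1$ or $\varepsilon_N(\beta_2)$ beginning each block genuinely interrupts the zero runs so that the maximal run is exactly the inserted block length up to an additive $O(N)$ error that vanishes in the ratio. Once these error terms are absorbed, the two limits drop out verbatim from (\ref{inf2})-type identities, and the lemma follows.
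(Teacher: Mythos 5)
Your proposal is correct and follows the same route as the paper: the paper itself simply states that the proof is identical to that of Lemma \ref{sub}, carried out by the same three-case division of the range $n_k<n\le n_{k+1}$, and omits the details. Your filled-in version — identifying $\varepsilon(1,\beta)$ with the concatenated word via Theorem \ref{P2}, locating the record zero-run of length $m_k-n_k+O(N)$ ending at $n\approx m_k$ for the limsup and the ratio at $n\approx n_{k+1}+m_k-n_k$ for the liminf, and absorbing the $O(N)$ padding errors — is exactly the intended adaptation.
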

\begin{proof}
The proof is just as the same as the proof of Lemma \ref{sub} by dividing into three cases. We omit it here.
\end{proof}

Analogously, we now focus on the estimation of the cardinality of the set $D'_k$.  Let $q'_k:=\sharp D'_k.$ We obtain the following lemma.
\begin{lemma}\label{c'}
For every $1<\beta_1<\beta_2<2$, let $\widetilde{\beta}_N$ be the real number defined in this section. Then there exist an integer $k(\beta_1, \widetilde{\beta}_N)$ and real numbers $c(\beta_1, \widetilde{\beta}_N),c'(\beta_1, \widetilde{\beta}_N)$  such that, for every $k\geq k(\beta_1, \widetilde{\beta}_N)$, we have
\begin{equation}
q'_k\geq c'(\beta_1, \widetilde{\beta}_N)c(\beta_1, \widetilde{\beta}_N)^k\beta_1^{\sum\limits_{i=1}^{k-1}(n_{i+1}-m_i)}.
\end{equation}
\end{lemma}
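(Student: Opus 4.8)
The plan is to mirror the cardinality argument of Lemma \ref{c} exactly, with the length lower bound $\overline{\beta}^d$ of R\'enyi's estimate replaced by the parameter-space ingredients. First I would record the elementary count $\sharp \M'_d \geq \sharp \Sigma_{\widetilde{\beta}_N}^{d-2N} \geq \widetilde{\beta}_N^{\,d-2N}$, which follows from Theorem \ref{R} applied to the base $\widetilde{\beta}_N$. Since $\beta_1 < \widetilde{\beta}_N$, there is an integer $d'=d'(\beta_1,\widetilde{\beta}_N)$ so that $\widetilde{\beta}_N^{\,d-2N} \geq \beta_1^{d}$ for all $d \geq d'$; this is the analogue of (\ref{bet}). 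Because $m_k - n_k$ is increasing to $+\infty$, one can fix a threshold $k(\beta_1,\widetilde{\beta}_N)$ beyond which $m_k-n_k \geq d'$, and hence for $k \geq k(\beta_1,\widetilde{\beta}_N)$,
$$\sharp \M'_{m_k-n_k} \geq \beta_1^{\,m_k-n_k}.$$

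Next I would bound $\sharp G'_{k+1}$ from below by distinguishing $p_k \leq 2N$ and $p_k > 2N$, exactly as in Lemma \ref{c}. In the first case the tail $u_k^{(t_k+1)} = 0^{p_k}$ contributes a single choice, so $\sharp G'_{k+1} \geq (\sharp \M'_{m_k-n_k})^{t_k} \geq \beta_1^{(m_k-n_k)t_k}$, and using $n_{k+1} = (m_k-n_k)t_k + m_k + p_k$ with $p_k \leq 2N$ this is at least $\beta_1^{-2N}\beta_1^{\,n_{k+1}-m_k}$. In the second case the tail ranges over $\M'_{p_k}$, giving an extra factor $\sharp \M'_{p_k} \geq \widetilde{\beta}_N^{\,p_k-2N} \geq \beta_1^{\,p_k}$ (after absorbing the fixed correction $\beta_1^{-d'}/\widetilde{\beta}_N^{-d'}$ into a constant, as in the original proof), again yielding a uniform lower bound of the shape $c \cdot \beta_1^{\,n_{k+1}-m_k}$. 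Setting $c(\beta_1,\widetilde{\beta}_N)$ to be the minimum of the two resulting constants gives $\sharp G'_{k+1} \geq c(\beta_1,\widetilde{\beta}_N)\,\beta_1^{\,n_{k+1}-m_k}$ for all large $k$.

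Finally I would telescope over the product structure $D'_k$, using $q'_k = \sharp D'_k = \prod_{i=1}^{k}\sharp G'_i$ and restricting the product to indices $i \geq k(\beta_1,\widetilde{\beta}_N)$ to pick up the factor $\beta_1^{\sum_{i}(n_{i+1}-m_i)}$; the finitely many omitted early factors are collected into the constant $c'(\beta_1,\widetilde{\beta}_N) = \beta_1^{-\sum_{i=1}^{k(\beta_1,\widetilde{\beta}_N)-1}(n_{i+1}-m_i)}$, giving precisely the claimed bound. The one genuine point requiring care — and the main obstacle — is justifying that the counting in the parameter space is legitimate: here $\M'_d$ and $G'_{k+1}$ are sets of self-admissible words, and one must invoke Proposition \ref{se} and the terminal block $0^N$ built into each $u_k \in G'_k$ to guarantee that distinct choices of the blocks $u_k^{(i)}$ produce distinct self-admissible prefixes (so the product count is exact and each element of $D'_k$ genuinely yields a distinct cylinder $I_n^P(u)$). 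Once that concatenation-and-distinctness bookkeeping is in place, the estimate is formally identical to Lemma \ref{c} with $\overline{\beta}$ replaced by $\beta_1$ and $\beta_N$ by $\widetilde{\beta}_N$, so I would simply indicate that the remaining computation is the same.
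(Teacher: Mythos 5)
Your proposal is correct and follows exactly the route the paper intends: the paper's own proof of this lemma is just the one-line remark that the method of Lemma \ref{c} applies, and your adaptation (replacing $\overline{\beta}$ by $\beta_1$, $\beta_N$ by $\widetilde{\beta}_N$, and $N$ by $2N$ in the counting of $\M'_d$, then splitting on $p_k\leq 2N$ versus $p_k>2N$ and telescoping over $D'_k$) is precisely that method. Your added remark about injectivity of the block-to-word map and the role of the terminal $0^N$ with Proposition \ref{se} is a correct filling-in of a detail the paper handles only in the remark preceding the lemma.
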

\begin{proof}
We use the similar method as Lemma \ref{c}, the details are left to the readers.
\end{proof}

Let $$C(\beta_1)=\frac{(\beta_1-1)^2}{\beta_1}.$$ Notice that $\underline{\beta}(u)\geq \beta_1>1$ for any $u=(u_1,\ldots,u_n)\in \Lambda_n(\beta_1,\beta_2)$ where $\Lambda_n(\beta_1,\beta_2)$ is defined by (\ref{la}). Then $$C(\underline{\beta}(u))= \frac{(\underline{\beta}(u)-1)^2}{\underline{\beta}(u)}\geq C(\beta_1).$$ The following lemma gives the estimation of the length of the cylinders with non-empty intersection with the Cantor set $E(\beta_1,\beta_2)$ which will be useful to estimate the local dimension $\liminf\limits_{n\rightarrow \infty}\frac{\log \mu\left(B(\beta,r)\right)}{\log|r|}$ for any $r>0$ and $\beta\in E(\beta_1,\beta_2)$.

\begin{lemma}\label{i}
For any $\beta\in E(\beta_1,\beta_2)$, suppose $\varepsilon(1,\beta)=(u_1,u_2,\ldots)$. Then we have $$|I_n^P(u_1,\ldots,u_n)|\geq C(\beta_1)\beta_2^{-(n+N)}$$ for any $n\geq 1.$
\end{lemma}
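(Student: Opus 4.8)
The plan is to read the bound straight off Lemma \ref{S}(2), using only monotonicity facts already in hand. Fix $\beta\in E(\beta_1,\beta_2)$ with $\varepsilon(1,\beta)=(u_1,u_2,\ldots)$ and put $\omega=(u_1,\ldots,u_n)\in\Lambda_n(\beta_1,\beta_2)$. Since $I_n^P(\omega)\subseteq[\beta_1,\beta_2)$ we have $\underline{\beta}(\omega)\ge\beta_1$ and $\overline{\beta}(\omega)\le\beta_2$; the first inequality gives $C(\omega)\ge C(\beta_1)$ (as observed just before the statement, $\beta\mapsto(\beta-1)^2/\beta$ being increasing for $\beta>1$), and the second gives $\overline{\beta}(\omega)^{-n}\ge\beta_2^{-n}$. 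Thus the whole estimate reduces to controlling the parenthetical factor in Lemma \ref{S}(2), and the target is to show this factor is at least $\beta_2^{-N}$.

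First I would dispose of the case $t(\omega)=0$, which in particular covers every non-recurrent prefix (there $\tau(\omega)=n$, whence $t(\omega)=n-\lfloor n/n\rfloor\,n=0$). Here Lemma \ref{S}(2) gives at once
$$|I_n^P(\omega)|\ge C(\omega)\,\overline{\beta}(\omega)^{-n}\ge C(\beta_1)\,\beta_2^{-n}\ge C(\beta_1)\,\beta_2^{-(n+N)}.$$
For the recurrent case $t(\omega)\neq0$ I would keep only the final summand in the bracket and use $\omega_{\tau(\omega)}\ge0$ together with $\overline{\beta}(\omega)\le\beta_2$ to obtain
$$\frac{\omega_{t(\omega)+1}}{\overline{\beta}(\omega)}+\cdots+\frac{\omega_{\tau(\omega)}+1}{\overline{\beta}(\omega)^{\tau(\omega)-t(\omega)}}\ \ge\ \frac{1}{\overline{\beta}(\omega)^{\tau(\omega)-t(\omega)}}\ \ge\ \beta_2^{-(\tau(\omega)-t(\omega))}.$$
Combined with the previous display, the lemma then follows from the single structural inequality $\tau(\omega)-t(\omega)\le N$: granting it, the bracket is $\ge\beta_2^{-N}$ and $|I_n^P(\omega)|\ge C(\beta_1)\beta_2^{-n}\beta_2^{-N}=C(\beta_1)\beta_2^{-(n+N)}$.

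The hard part, and the main obstacle, is establishing $\tau(\omega)-t(\omega)\le N$. By the decomposition recorded after the definition of the recurrence time, $\omega=\big((\omega_1,\ldots,\omega_{\tau(\omega)})^{\lfloor n/\tau(\omega)\rfloor},\omega_1,\ldots,\omega_{t(\omega)}\big)$, so $\tau(\omega)-t(\omega)$ is exactly the length of the overhang $(\omega_{t(\omega)+1},\ldots,\omega_{\tau(\omega)})$ needed to complete the last period. Here I would exploit the rigid block structure of the words in $D'_k$: each building block of $\varepsilon(1,\beta)$ begins with the length-$N$ marker $(\varepsilon_1(\beta_2),\ldots,\varepsilon_N(\beta_2)-1)$, whose leading digit is nonzero, and terminates with $0^N$, while the long runs of zeros forcing $\limsup r_n(\beta)/n=b$ grow (and become ever more widely spaced) with the block index. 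Genuine exact periodicity of a long prefix is incompatible with these growing, unequally spaced zero runs, so recurrence is confined and the truncation at $n$ cannot strand more than one marker's worth of symbols, which should pin the overhang down to length at most $N$.

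Making this alignment argument precise is the delicate step: one must reconcile the minimal period $\tau(\omega)$ with the exact positions of the markers and the $0^N$ tails, ruling out the worst case in which $n\bmod\tau(\omega)$ lands just before a long trailing zero run of the period. Once that structural inequality is secured, the two displays above close the proof by direct substitution, and everything else is routine.
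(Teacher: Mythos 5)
Your reduction to Lemma \ref{S}(2) applied directly to $\omega=(u_1,\ldots,u_n)$ hinges entirely on the structural inequality $\tau(\omega)-t(\omega)\leq N$, and this is exactly where the argument breaks: you do not prove it, and for the words arising in this construction it is in general false. Indeed, since every proper shift $\sigma^k\omega$ with $k\leq n-N$ lies in $\Sigma_{\widetilde{\beta}_N}^{n-k}$ and is therefore $\leq_{\rm lex}(\varepsilon_1(\beta_2),\ldots,\varepsilon_N(\beta_2)-1)<_{\rm lex}(u_1,\ldots,u_N)$, no such $k$ can be a recurrence time; so if $\omega$ is recurrent at all, then $\tau(\omega)>n-N$, hence $\lfloor n/\tau(\omega)\rfloor=1$, $t(\omega)=n-\tau(\omega)<N$, and $\tau(\omega)-t(\omega)=2\tau(\omega)-n>n-2N$, which is enormous rather than bounded by $N$. (This happens concretely whenever $n$ truncates the word a few symbols into one of the markers $(\varepsilon_1(\beta_2),\ldots)$, so that the tail of $\omega$ replicates its head.) Your heuristic about growing, unevenly spaced zero runs does not exclude this; it is precisely the marker blocks, not the zero runs, that create the short overlap. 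Keeping only the last summand of the bracket then yields the useless bound $\beta_2^{-(\tau(\omega)-t(\omega))}$, and one would instead have to locate the first nonzero digit among $\omega_{t(\omega)+1},\ldots,\omega_{\tau(\omega)}$, which is a different and messier analysis.

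The paper avoids the bracket altogether by a padding trick you should adopt: it passes to the word $(u_1,\ldots,u_n,0^N)$ of length $n+N$ and shows this word is \emph{non-recurrent}, because every shift $\sigma^i$ with $1\leq i<n+N$ is either a $\widetilde{\beta}_N$-admissible word (hence $<_{\rm lex}(\varepsilon_1(\beta_2),\ldots,\varepsilon_N(\beta_2))=(u_1,\ldots,u_N)$) or a block of zeros shorter than $N$, while the prefix begins with $u_1=\varepsilon_1(\beta_2)\geq 1$. Then Lemma \ref{S}(2) applies in its clean $t=0$ form at length $n+N$, and the nesting of parameter cylinders gives
$$|I_n^P(u_1,\ldots,u_n)|\geq |I_{n+N}^P(u_1,\ldots,u_n,0^N)|\geq C(\beta_1)\,\overline{\beta}(u_1,\ldots,u_n,0^N)^{-(n+N)}\geq C(\beta_1)\,\beta_2^{-(n+N)}.$$
Note that the exponent $n+N$ in the statement is the length of the padded word, not an overhang bound; your monotonicity observations ($C(\omega)\geq C(\beta_1)$, $\overline{\beta}(\omega)\leq\beta_2$) are correct and are reused here, but the missing idea is the appended $0^N$.
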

\begin{proof}
For any $n\geq1$, we are going to take the word $(u_1,\ldots,u_n,0^N)$ into account. We claim that the word $(u_1,\ldots,u_n,0^N)$ is non-recurrent.

In fact, by the construction of $E(\beta_1,\beta_2)$, for any $1\leq i<n$, we have $$\sigma^i(u_1,\ldots,u_n,0^N)\in \Sigma_{\widetilde{\beta}_N}^{n-i+N}.$$ Notice that $\omega \leq_{\rm lex} (\varepsilon_1(\beta_2),\ldots,\varepsilon_{N}(\beta_2)-1)<_{\rm lex}(\varepsilon_1(\beta_2),\ldots,\varepsilon_{N}(\beta_2))$ for any $\omega\in\Sigma_{\widetilde{\beta}_N}^{n}$ with $n\geq N$. It comes to the conclusion that $\sigma^i(u_1,\ldots,u_n,0^N)<_{\rm lex} (\varepsilon_1(\beta_2),\ldots,\varepsilon_{N}(\beta_2))$ for any $1\leq i<n+N$ which implies that $(u_1,\ldots,u_n,0^N)$ is non-recurrent. Thus, by Lemma \ref{S}(2), we have
$$|I_n^P(u_1,\ldots,u_n)|\geq |I_{n+N}^P(u_1,\ldots,u_n,0^N)|\geq C(u_1,\ldots,u_n,0^N)\overline{\beta}(u_1,\ldots,u_n,0^N)^{-(n+N)}.$$
It follows from the fact $\overline{\beta}(u_1,\ldots,u_n,0^N)\leq \beta_2$ that $$|I_n^P(u_1,\ldots,u_n)|\geq C(\beta_1)\beta_2^{-(n+N)}.$$

\end{proof}

Let us now concentrate on giving the lower bound of $\dim_{\rm H}E(\beta_1,\beta_2)$. As the conventional process, we define a measure supported on $E(\beta_1,\beta_2)$ which is similar to Section 3.2 by distributing the mass uniformly. We will give the local dimension $\liminf\limits_{n\rightarrow \infty}\frac{\log \mu\left(I_n^P(u)\right)}{\log|I_n^P(u)|}$ for any cylinder $I_n^P(u)$ which has non-empty intersection with $E(\beta_1,\beta_2)$. Without any confusion, here and subsequently, $I_n^P$ stands for the cylinder $I_n^P(u)$ for all $u\in\Lambda_n$.

(1) Define a probability measure supported on $E(\beta_1,\beta_2)$. Let $$\mu([\beta_1,\beta_2))=1\quad {\rm and}\quad \mu(I^P_{n_1}(u))=\frac{1}{\sharp G'_1},\ {\rm for}\ u \in D'_1.$$ For all $k\geq 1,$ and $u=(u_1,\ldots,u_{k+1})\in D'_{k+1}$, define
$$\mu(I_{n_{k+1}}^P(u))=\frac{\mu(I_{n_k}^P(u_1,\ldots,u_k))}{\sharp G'_{k+1}}.$$

(2) Estimate the local dimension $\liminf\limits_{n\rightarrow \infty}\frac{\log \mu\left(I_n^P\right)}{\log|I_n^P|}$ where $I_n^P\cap E(\beta_1,\beta_2) \neq \emptyset$. It follows from the definition of the measure that
\begin{equation}\label{m'1}
\mu(I_{n_i}^P)=\frac{1}{q'_i}\leq \frac{1}{c'(\beta_1, \widetilde{\beta}_N)c(\beta_1, \widetilde{\beta}_N)^i \beta_1^{\sum\limits_{j=1}^{i-1}(n_{j+1}-m_j)}},
\end{equation}for every $i>k(\beta_1, \widetilde{\beta}_N).$ For any $\beta\in E(\beta_1,\beta_2)$, suppose $\varepsilon(1,\beta)=(u_1,u_2,\ldots)$. For each $n\geq 1$, there exists an integer $k\geq 1$ such that $n_k <n\leq n_{k+1}$. It falls naturally into three cases.

Case 1. $n_k<n\leq m_k$. It follows from (\ref{m'1}) that $$\mu(I_n^P)= \mu(I_{n_k}^P)\leq c'(\beta_1, \widetilde{\beta}_N)^{-1}c(\beta_1, \widetilde{\beta}_N)^{-k} \beta_1^{-\sum\limits_{j=1}^{k-1}(n_{j+1}-m_j)}.$$ Furthermore,  by the construction of $E(\beta_1,\beta_2)$, the word $(u_1,\ldots,u_{m_k})$ is non-recurrent. Thus, by Lemma \ref{S}, we have  $$|I_n^P|\geq |I_{m_k}^P|\geq C(u_1,\ldots,u_{m_k})\overline{\beta}(u_1,\ldots,u_{m_k})^{-{m_k}}\geq c(\beta_1)\beta_2^{-m_k}.$$
As a consequence, $$\frac{\log \mu(I_n^P)}{\log|I_n^P|}\geq \frac{\log c'(\beta_1, \widetilde{\beta}_N)+k \log c(\beta_1, \widetilde{\beta}_N)+{\sum\limits_{j=1}^{k-1}(n_{j+1}-m_j)\log \beta_1}}{\log c(\beta_1)+m_k\log \beta_1}.$$

Case 2. $n=m_k+i(m_k-n_k)+\ell$ for some $0\leq i < t_k$ and $0\leq \ell< m_k-n_k$. On the one hand, when $0\leq \ell\leq 2N$, we have
$$\begin{aligned}
\mu(I_n^P)= \mu(I_{m_k+i(m_k-n_k)+\ell}^P)&\leq \mu(I_{m_k}^P)\cdot\frac{1}{(\sharp M_{m_k-n_k})^i}\\
&\leq c'(\beta_1, \widetilde{\beta}_N)^{-1}c(\beta_1,\widetilde{\beta}_N)^{-k} {\beta_1}^{-\left(\sum\limits_{j=1}^{k-1}(n_{j+1}-m_j)+i(m_k-n_k)\right)}.
\end{aligned}$$
On the other hand, when $2N< \ell< m_k-n_k$, we have
$$\begin{aligned}
\mu(I_n^P)= \mu(I_{m_k+i(m_k-n_k)+\ell}^P)&\leq \mu(I_{m_k}^P)\cdot\frac{1}{(\sharp M_{m_k-n_k})^i}\cdot\frac{1}{\Sigma_{\widetilde{\beta}_N}^{\ell-2N}}\\
&\leq c'(\beta_1, \widetilde{\beta}_N)^{-1}c(\beta_1, \widetilde{\beta}_N)^{-k}{\beta_1}^{-\left(\sum\limits_{j=1}^{k-1}(n_{j+1}-m_j)+i(m_k-n_k)\right)}{\widetilde{\beta}_N}^{-\ell+2N}.
\end{aligned}$$
Moreover, by Lemma \ref{i},  $$|I_n^P|=|I_{m_k+i(m_k-n_k)+\ell}^P|\geq C(\beta_1){\beta_2}^{-(m_k+i(m_k-n_k)+\ell+N)}.$$
Hence, $$\begin{aligned}
&\frac{\log \mu(I_n^P)}{\log|I_n^P|}\\&\geq \frac{\left(\sum\limits_{j=1}^{k-1}(n_{j+1}-m_j)+i(m_k-n_k)\right) \log \beta_1+(\ell-2N)\log\widetilde{\beta}_N+k\log c(\beta_1, \widetilde{\beta}_N)+\log c'(\beta_1, \tilde{\beta}_N)}{\log C(\beta_1)+(m_k+i(m_k-n_k)+\ell+N) \log \beta_2}.
\end{aligned}$$

Case 3.  $n=m_k+t_k(m_k-n_k)+\ell$ for some $0\leq \ell\leq p_k$. Similarly, when $0\leq \ell\leq 2N$, we have
$$\begin{aligned}
\mu(I_n^P)= \mu(I_{m_k+i(m_k-n_k)+\ell}^P)&\leq \mu(I_{m_k}^P)\cdot\frac{1}{(\sharp M_{m_k-n_k})^{t_k}}\\
&\leq c'(\beta_1, \widetilde{\beta}_N)^{-1}c(\beta_1,\widetilde{\beta}_N)^{-k} {\beta_1}^{-\left(\sum\limits_{j=1}^{k-1}(n_{j+1}-m_j)+t_k(m_k-n_k)\right)}.
\end{aligned}$$
When $2N< \ell\leq p_k$, it follows that
$$\begin{aligned}
\mu(I_n^P)= \mu(I_{m_k+t_k(m_k-n_k)+\ell}^P)&\leq \mu(I_{m_k}^P)\cdot\frac{1}{(\sharp M_{m_k-n_k})^i}\cdot\frac{1}{\Sigma_{\widetilde{\beta}_N}^{\ell-2N}}\\
&\leq c'(\beta_1, \widetilde{\beta}_N)^{-1}c(\beta_1, \widetilde{\beta}_N)^{-k}{\beta_1}^{-\left(\sum\limits_{j=1}^{k-1}(n_{j+1}-m_j)+i(m_k-n_k)\right)}{\widetilde{\beta}_N}^{-\ell+2N}.
\end{aligned}$$
Furthermore, we conclude from Lemma \ref{i} that $$|I_n^P|= |I_{m_k+t_k(m_k-n_k)+\ell}^P|\geq C(\beta_1){\beta_2}^{-(m_k+t_k(m_k-n_k)+\ell+N)}.$$
Therefore, we have $$\begin{aligned}
&\frac{\log \mu(I_n^P)}{\log|I_n^P|}\\&\geq \frac{\left(\sum\limits_{j=1}^{k-1}(n_{j+1}-m_j)+t_k(m_k-n_k)\right) \log \beta_1+(\ell-2N)\log\tilde{\beta}_N+k\log c(\beta_1, \widetilde{\beta}_N)+\log c'(\beta_1, \widetilde{\beta}_N)}{\log C(\beta_1)+(m_k+t_k(m_k-n_k)+\ell+N) \log \beta_2}.
\end{aligned}$$

Just proceeding as the same analysis in Section 3.2, for all the above three cases, we obtain   $$\liminf_{n\rightarrow \infty}\frac{\log \mu(I_n^P)}{\log|I_n^P|}\geq \left(1-\frac{b^2(1-a)}{b-a}\right)\frac{\log\beta_1}{\log\beta_2}.$$

(3) Use the mass distribution principle (see \cite[Page 60]{FE}).  Now we take any $B(\beta,r)$ with center $\beta\in E(\beta_1,\beta_2)$ and sufficiently small enough $r$ verifying
\begin{equation}\label{I1}
|I_{n+1}^P|\leq r<|I_n^P|\leq \beta_1^{-n+1},
\end{equation}where the last inequalities is guaranteed by the fact that $\overline{\beta}(\omega)\geq \beta_1$ for any $\omega\in \Lambda_n(\beta_1,\beta_2)$. By Lemma \ref{i}, we have $$|I_n^P|\geq C(\beta_1)\beta_2^{-(n+N)}.$$ As a result, the ball $B(\beta,r)$ intersects no more than $2\left\lfloor C(\beta_1)^{-1}\beta_2^N\left(\frac{\beta_2}{\beta_1}\right)^{n-1}\right\rfloor+2$ cylinders of order $n$. Moreover, it follows from Lemma \ref{i} that \begin{equation}\label{I2}
r\geq|I_{n+1}^P|\geq C(\beta_1)\beta_2^{-(n+1+N)}.
\end{equation} Immediately, the combination of (\ref{I1}) and (\ref{I2}) gives
$$\begin{aligned}
&\liminf_{r\rightarrow 0}\frac{\log \mu\left(B(\beta,r)\right)}{\log r}\\ &\geq \liminf_{n\rightarrow\infty}\frac{\log\left(2\left\lfloor C(\beta_1)^{-1}\beta_2^N\left(\frac{\beta_2}{\beta_1}\right)^{n-1}\right\rfloor+2\right)+\log\mu\left(I_n^P\right)}{\log |I_n^P|}\cdot\frac{\log |I_n^P|}{-\log C(\beta_1)+(n+1+N)\log \beta_2}\\
&\geq \liminf_{n\rightarrow\infty}\left(\frac{(n-1)(\log\beta_2-\log \beta_1)}{-\log C(\beta_1)+ (n+N)\log \beta_2}+\frac{\log \mu\left(I_n^P\right)}{\log |I_n^P|}\right)\cdot\frac{(n-1)\log \beta_1}{-\log C(\beta_1)+ (n+1+N)\log \beta_2}\\ &\geq \left(\frac{\log\beta_2-\log \beta_1}{\log\beta_2}+\left(1-\frac{b^2(1-a)}{b-a}\right)\frac{\log\beta_1}{\log\beta_2}\right)\frac{\log\beta_1}{\log\beta_2}.
\end{aligned}$$
Therefore, by the mass distribution principle and letting $\beta_1\rightarrow \beta_2,$ we get our desired result.
\section{Proof of Theorem \ref{res'}}
Akin to Section 5, we need to find a subset of $E^P_{0,1}$ which is a dense $G_\d$ set in the interval $[1,2]$. Since the process of our proof is almost the same as Section 5. We only provide the construction of the required set $V'$ in this section.

For all $k\geq 1$, we first choose the sequences $\{n_k\}_{k=1}^\infty$ and $\{m_k\}_{k=1}^\infty$  such that $m_k-n_k>\max\{2(m_{k-1}-n_{k-1}), n_k-k\}$ and  $n_k<m_k<n_{k+1}$. In addition, the sequences  $\{n_k\}_{k=1}^\infty$ and $\{m_k\}_{k=1}^\infty$ is chosen to satisfy $$\lim_{k\rightarrow \infty}\frac{m_k-n_k}{n_{k+1}+m_k-n_k}=0,$$and$$\lim_{k\rightarrow \infty}\frac{m_k-n_k}{m_k}=1.$$
Actually, let $$n'_k=k^{2k}\quad {\rm and}\quad  m'_k=(k+1)^{2k+1}.$$ We can obtain the required sequences with some adjustments.

For all $k\geq 1$, denote $n_{k+1}=(m_k-n_k)t_k+m_k+p_k$ where $0\leq p_k <m_k-n_k$. We now define$$V'=\bigcap_{n=1}^\infty\bigcup_{k=n}^\infty\bigcup_{(\epsilon_1,\ldots,\epsilon_k) \in\Lambda_k(1,2)}{\rm int} \left(I_{n_{k+1}}^P\left(\epsilon_1,\ldots,\epsilon_k ,0^{n_k-k},(1,0^{m_k-n_k})^{t_k},0^{p_k}\right)\right)$$ where $\Lambda_k(1,2)$ is defined by (\ref{la}). Since $m_k-n_k>n_k-k$, we get $(1,0^{m_k-n_k})<_{\rm lex}(\varepsilon_1,\cdots,\varepsilon_k,0^{n_k-k})$ for all $k\geq 1$. By Lemma \ref{se}, the set $V'$ is well defined.

{\bf Acknowledgement} This work was partially supported by NSFC 11771153.

Author's E-mail address: {\small \it  lixuan.zheng@u-pec.fr}
\end{document}